\documentclass[11pt,a4paper]{article}
\usepackage{amssymb, amsthm, amsmath, amsfonts, mathrsfs}
\usepackage{array, epsfig}
\usepackage{bbm}
\usepackage[hidelinks]{hyperref}
\usepackage[numbers,square]{natbib}
\usepackage{color}
\usepackage{enumitem}

  \evensidemargin
\oddsidemargin
\theoremstyle{plain}
\newtheorem{theorem}{Theorem}[section]
\newtheorem{lemma}[theorem]{Lemma}
\newtheorem{corollary}[theorem]{Corollary}
\newtheorem{proposition}[theorem]{Proposition}

\theoremstyle{definition}
\newtheorem{definition}[theorem]{Definition}

\numberwithin{equation}{section}

\theoremstyle{remark}
\newtheorem{remark}[theorem]{Remark}

\def\BB{\mathbb{B}}

\def\EE{\mathbb{E}}

\def\NN{\mathbb{N}}

\def\PP{\mathbb{P}}
\def\QQ{\mathbb{Q}}
\def\RR{\mathbb{R}}
\def\SS{\mathbb{S}}
\def\TT{\mathbb{T}}

\def\ZZ{\mathbb{Z}}

\def\cA{\mathcal{A}}
\def\cB{\mathcal{B}}
\def\cC{\mathcal{C}}
\def\cD{\mathcal{D}}
\def\cE{\mathcal{E}}
\def\cF{\mathcal{F}}

\def\cH{\mathcal{H}}

\def\cK{\mathcal{K}}
\def\cL{\mathcal{L}}

\def\cP{\mathcal{P}}

\def\cS{\mathcal{S}}
\def\cT{\mathcal{T}}

\def\cV{\mathcal{V}}

\def\cY{\mathcal{Y}}

\newcommand{\dd}{{\rm d}}
\newcommand{\conv}{\mathop{\mathrm{conv}}\nolimits}
\newcommand{\frI}{\mathop{\mathrm{I}}\nolimits}

\newcommand{\pow}{\mathop{\mathrm{pow}}\nolimits}

\newcommand{\proj}{\operatorname{proj}}
\newcommand{\inter}{\operatorname{int}}

\newcommand{\ver}{\operatorname{vert}}
\newcommand{\skel}{\operatorname{skel}}

\newcommand{\bd}{\operatorname{bd}}

\newcommand{\dint}{\textup{d}}

\newcommand{\apex}{\mathop{\mathrm{apex}}\nolimits}
\newcommand{\ext}{\mathop{\mathrm{ext}}\nolimits}
\newcommand{\cl}{\mathop{\mathrm{cl}}\nolimits}

\newcommand{\unif}{\text{\rm unif}}

\DeclareMathOperator*{\argmin}{arg\,min}

\newcommand{\ton}{\overset{}{\underset{n\to\infty}\longrightarrow}}

\newcommand{\ind}{\mathbbm{1}}

\newcommand{\by}{\mathbf{y}}

\newcommand{\bv}{\mathbf{v}}
\newcommand{\bh}{\mathbf{h}}

\newcommand{\eee}{{\rm e}}

\newcommand{\sk}[1]{\mathscr{#1}}
\newcommand{\origin}{\ensuremath{\mathbf{o}}}

\makeatletter
\let\@fnsymbol\@alph
\makeatother

\begin{document}

\title{\bfseries Limits of Poisson-Laguerre tessellations}

\author{Anna Gusakova\footnotemark[1],\; Mathias in Wolde-L\"ubke\footnotemark[2]}

\date{}
\renewcommand{\thefootnote}{\fnsymbol{footnote}}
\footnotetext[1]{M\"unster University, Germany. Email: gusakova@uni-muenster.de}
\footnotetext[2]{M\"unster University, Germany. Email: miwluebke@uni-muenster.de}

\maketitle

\begin{abstract}

For sequences of Poisson-Laguerre tessellations and their duals in $\RR^d$, generated by Poisson point processes $(\eta_n)_{n\in\NN}$ in $\RR^d \times \RR$, we prove limit theorems as $n\to \infty$. The intensity measure of $\eta_n$ has density of the form $(v,h)\mapsto f_n(h)$ with respect to the Lebesgue measure, where $v\in \RR^d$ and $h\in \RR$. Identifying a tessellation with its skeleton (the union of the boundaries of all its cells) we provide verifiable conditions on $(f_n)_{n\in\NN}$ that ensure convergence either to the classical Poisson-Voronoi/Poisson-Delaunay tessellation or to another Poisson-Laguerre tessellation. We also discuss convergence of the corresponding typical cells. As a corollary, we show that the Poisson-Voronoi and the Poisson-Delaunay tessellations arise as limits of the $\beta$-Voronoi and the $\beta$-Delaunay tessellations, respectively, as $\beta\to -1$. \\

\noindent {\bf Keywords}. {Convergence in probability; Poisson-Laguerre tessellation; Poisson-Voronoi tessellation; Poisson-Delaunay tessellation; Poisson point process; random closed sets; skeleton; stabilization; typical cell; weak convergence}  \\
{\bf MSC}. Primary 60D05; 60G55; Secondary 60B10
\end{abstract}


\section{Introduction}

A tessellation in $\RR^d$ is a locally finite collection of convex polytopes, which have non-empty disjoint interior, and cover the entire space $\RR^d$. Random tessellations are classical objects in stochastic geometry, but they appear naturally in other areas of mathematics, like percolation theory, optimal transport, computational geometry, high-dimensional probability and have a number of applications in machine learning, cellular networks and materials science; see e.g \cite{ BookCompGeom, Percolation1, BPR23, Dir19, ORT24, PC19_OT, RJ25, Network} and references therein. In this article we study the convergence of skeletons (i.e.~the union of cell boundaries) of Poisson-Laguerre tessellations and we give conditions ensuring the convergence to the classical Poisson-Voronoi/Poisson-Delaunay tessellations or to another Poisson-Laguerre tessellation.

\paragraph{The model (Poisson-Laguerre tessellation).} 

Consider a (random) set $A\subset \RR^d\times\RR$ of weighted points $(v,h)$, where $v\in\RR^d$ denotes the spatial coordinate and $h\in\RR$ is a weight (interpretable, for instance, as time or height). For any $(v,h)\in A$, define its Laguerre cell by
\[
    C((v,h),A):=\{w\in \RR^d: \|w-v\|^2+h\le \|w-v'\|^2+h' \text{ for all } (v',h')\in A\}.
\]
The associated Laguerre diagram $\cL(A)$ is the collection of all Laguerre cells with non-empty interior. In the special case of constant weights (e.g.\ $h\equiv 0$),  $\cL(A)$ coincides with the classical Voronoi diagram.

In this article we take $A$ to be a Poisson point process $\eta$ in $\RR^d\times \RR$, yielding the \textit{Poisson-Laguerre tessellation}. The classical Poisson-Voronoi case is included as follows: if $\eta=\eta^{\gamma}$ is a homogeneous Poisson point process in $\RR^d$ with intensity $\gamma>0$, we identify it with a process in $\RR^d \times \RR$ via the embedding $v\mapsto (v,0)$. Then $\cL(\eta^{\gamma})$ coincides with the \textit{Poisson-Voronoi tessellation}, one of the most studied examples of a random tessellation  \cite{BookVoronoi}.

The setting of an independently marked homogeneous Poisson process in $\RR^d$ with marks in $\RR$ was studied in \cite{LZ08}. A further extension was introduced in \cite{gusakova2024PLT}, where the authors considered a Poisson point process $\eta=\eta_f$ in $\RR^d\times \RR$ with an intensity measure of the form
\[
    \Lambda_f(\cdot) = \int_{\RR^d}\int_{E} f(h) \ind((v,h)\in \cdot)\, \dd h\, \dd v,
\]
for some locally integrable and non-negative function $f:E\to \RR_+$, where $E\subset \RR$, satisfying some natural integrability assumption, i.e. $f$ is admissible \cite[Definition 3.4]{gusakova2024PLT} (see also Section \ref{subsec:laguerreTess}). A few special cases have been studied in detail, namely 
$f_{\beta}(h)=c_{d+1,\beta}h^{\beta}$,  $h>0$ for $\beta>-1$; $f'_{\beta}(h)=c'_{d+1,\beta}(-h)^{-\beta}$, $h<0$ for $\beta >d/2+1$; and $\widetilde f(h)=(2\pi)^{-d/2-1}e^{h/2}$.
These yield the so-called $\beta$-Voronoi, $\beta'$-Voronoi and Gaussian-Voronoi tessellations, respectively; see \cite{GKT20, GKT21b, sectional, GKT21, GKT21a}. 

Together with $\cL(\eta)$ one often considers the \textit{dual Poisson-Laguerre tessellation} $\cL^*(\eta)$. It is defined as the collection of cells obtained by taking the convex hull of points $v_1,\ldots, v_{m}$ with $(v_i,h_i)\in \eta$, $1\leq i\leq m$, such that the corresponding Laguerre cells $C((v_i,h_i),\eta)$ have a single common point. For $\eta=\eta_f$, with admissible $f$, and likewise for $\eta=\eta^{\gamma}$, one has $m=d+1$ almost surely, and hence each cell of $\cL^*(\eta)$ is a simplex. In particular, $\cL^*(\eta^{\gamma})$ is called \textit{Poisson-Delaunay tessellation}.

We note that, although the classical Poisson-Voronoi tessellation $\cL(\eta^{\gamma})$ does not belong to the family $\{\cL(\eta_f)\colon f\text{ admissible}\}$, it is conjectured to arise as a limiting case of the $\beta$-Voronoi tessellations as $\beta\to -1$ (see \cite[Remark 2]{GKT20}) and there is substantial evidence supporting this convergence (see Section \ref{sec:Applications} for more details). Our general convergence results for Poisson-Laguerre tessellations verify this conjecture.

\paragraph{Convergence of random tessellations.} To formulate limit theorems for tessellations, we first need to specify an appropriate notion of convergence. This is a non-trivial task, since it is not known whether the space of tessellations is even Polish (see \cite[Section 2.3]{HT19} and Section \ref{subsec:randomTessAndClosedSets}). A tessellation $X$ may be represented in several natural ways: as a particle process (the collection of its cells), as a random closed set via its skeleton
\[
    \skel(X) := \bigcup_{c\in X}\bd c;
\]
or as a random graph \cite{BC20, BG25}. These representations lead to different notions of convergence. In this article we adopt the skeleton representation and study convergence of the skeletons as random closed sets.

Results on convergence of tessellations are comparatively rare. Convergence of skeletons has been studied, for instance, for STIT tessellations \cite{NW03, NW05}, and rescaled limits for $\cL^*(\eta_{f_{\beta}})$ and $\cL^*(\eta_{f'_{\beta}})$ as $\beta\to\infty$ were obtained in \cite{GKT21}. In a different direction, law-intensity limits of Poisson–Voronoi tessellations in hyperbolic and other non-Euclidean settings have recently been investigated in \cite{IdealPV24,  IdealPV25, IdealPV23}. 

In this article we provide conditions on the sequence of height densities $(f_n)_{n\in\NN}$ that ensure convergence of the skeletons of the corresponding Poisson-Laguerre and dual Poisson-Laguerre tessellations, including a regime in which the limit is the classical Poisson-Voronoi/Poisson-Delaunay tessellation.

\paragraph{Main results.} Our main results cover two different regimes. 

\smallskip

\textbf{Limits within the Poisson–Laguerre class:} Let $(f_n)_{n\in\NN}$ be a sequence of admissible functions, which converges to an admissible function $f$ in local $L^1$ sense, and satisfies the uniform tail/moment bound
\[
    \sup_{n\in\NN}\int_{-\infty}^{x_0}|h|^{{d\over 2}+\delta}f_n(h)\dd h<\infty,
\]
for some $x_0\in\RR$ and $\delta>0$ (see Condition \hyperref[item:C1]{(C1)}). Then we will prove in Theorem \ref{thm:StrongProbConvLaguerre} that there exists a coupling of point processes $\eta_f$ and $(\eta_{f_n})_{n\in\NN}$, such that for any ball $B_R$ of radius $R>0$ we have
\begin{align*}
\lim_{n\to\infty}&\PP\big(\skel (\cL(\eta_{f_n}))\cap B_R= \skel (\cL(\eta_{f}))\cap B_R\big)=1,\\ \lim_{n\to\infty}&\PP\big(\skel (\cL^*(\eta_{f_n}))\cap B_R= \skel (\cL^*(\eta_{f}))\cap B_R\big)=1.
\end{align*}
In this case we say that convergence holds \textit{locally with high probability} and it in particular implies, that both skeletons converge weakly as random closed sets (see Section \ref{subsec:randomTessAndClosedSets} and Theorem \ref{thm:weakConvPPPonSameSpaceDual} for more details). Moreover, these criteria recover the results from \cite{GKT21} as shown in Section \ref{sec:Applications}. 

The proof of Theorem \ref{thm:StrongProbConvLaguerre} relies on stabilization and coupling arguments. Passing from convergence of the point processes $(\eta_{f_n})_{n\in\NN}$ to convergence of the skeletons is a delicate problem, since small changes in the configuration of the point process may create or remove cells at arbitrarily large distances, resulting in inherently non-local dependencies. This leads to the new obstacles in the proof.

\smallskip

\textbf{Homogeneous limits:} Let $(f_n)_{n\in \NN}$ be a sequence of non-negative, locally integrable 
functions such that the height intensity measure $f_n(h)\dd h$ converges vaguely to the weighted Dirac measure $\gamma\delta_0$, or, equivalently,
\[
\lim_{n\to\infty} \int_0^x f_n(h)\dd h = \gamma\in (0,\infty) \quad \text{ for all } x>0
\]
(see Condition \hyperref[item:C2]{(C2)}). Geometrically, this means that the
points of $\eta_{f_n}$ concentrate near the hyperplane $\{(v,h):h=0\}$ as $n\to\infty$. Then in Theorem \ref{thm:StrongConvergenceVoronoi} we show that there exists a coupling of $\eta^{\gamma}$ and $(\eta_{f_n})_{n\in\NN}$, such that for any ball $B_R$ of radius $R>0$ we have
\begin{align}\label{eq:SkelDelaunayConv}
\lim_{n\to\infty}&\PP\big(\skel(\cL^*(\eta_{f_n}))\cap B_R= \skel(\cL^*(\eta^{\gamma}))\cap B_R\big)=1,
\end{align}
 and the sequence of random closed sets $\skel(\cL(\eta_{f_n}))$ converges in probability to $\skel(\cL(\eta^{\gamma}))$ as $n\to\infty$. In particular, both sequences converge weakly as random closed sets (see Theorem \ref{thm:weakConv}). As an application, we obtain that the classical Poisson-Voronoi and Poisson-Delaunay tessellations arise as weak limits of the $\beta$-Voronoi and $\beta$-Delaunay tessellations, respectively, as $\beta\to -1$; see Section \ref{sec:Applications}. Let us emphasize that this case differs substantially from the previous one since $\eta_{f_n}$ is a point process in $\RR^d\times\RR_+$, while $\eta^{\gamma}$ is a point process in $\RR^d$. This makes the coupling step much more involved and prevents us from proving the statement of type \eqref{eq:SkelDelaunayConv} for $\skel(\cL(\eta_{f_n}))$ (see Remark \ref{rem:NoStrongConvVoronoi} for further details).

\smallskip

\textbf{Typical cell:} One may view the convergence at the level of skeletons as ``global" convergence of the boundary structure of the tessellations, whereas convergence of typical cells has a ``local" nature, describing the similarity of the tessellations in terms of the averaged (typical) cell. Despite this intuition, convergence of skeletons alone - even locally with high probability - does not, in general, imply weak convergence of the typical cells; see Section \ref{sec:TypicalCell}. The reason is that small perturbations of the skeleton can create a huge number of additional tiny cells, which may significantly change the distribution of the typical cell. It was shown in \cite[Proposition 6.4]{sectional}, however, that local convergence with high probability together with convergence of cell intensities does imply weak convergence of the typical cells, and we apply this result to Poisson-Laguerre tessellations (see Theorem \ref{thm:ConvergenceTypicalCell}).

\paragraph{Structure of the paper.} 
The remainder of the article is organized as follows. Section \ref{sec:Preliminaries} introduces the notation
and necessary background on random Laguerre tessellations used throughout. In Section \ref{sec:limitTess} we discuss the concept of random closed sets and convergence of random tessellation, and then state our main results, Theorem \ref{thm:StrongProbConvLaguerre} and Theorem \ref{thm:StrongConvergenceVoronoi}. Section \ref{sec:TypicalCell} is devoted to convergence of typical cells, including Theorem \ref{thm:ConvergenceTypicalCell}, while in Section \ref{sec:Applications} we consider a number of applications of our general convergence criteria. Section \ref{sec:Technical} collects the stabilization lemmas needed for the proofs. Section \ref{sec:ProofStrongProbConvLaguerre} contains the proofs of Theorem \ref{thm:StrongProbConvLaguerre} and Theorem \ref{thm:ConvergenceTypicalCell}, and in Section \ref{sec:ProofStrongConvergenceVoronoi} we prove Theorem \ref{thm:StrongConvergenceVoronoi}. Finally, Section \ref{sec:proofs} provides proofs of the technical lemmas.

\section{Preliminaries}\label{sec:Preliminaries}

\subsection{Frequently used notation}\label{subsec:FUN}
Let $d\geq 2$ and $A\subset\RR^d$. We denote by $\inter A$ the interior of $A$, by $\bd A$ its boundary, by $\cl A$ its closure and by $A^c=\RR^d\setminus A$ its complement. Further by $\conv(A)$ we denote the convex hull of $A$. If $A$ is a countable set we denote by $\# A$ its cardinality. For any $a>0$ we also set $aA:=\{ax\colon x\in A\}$ and for $A,B\subset \RR^d$ we denote by $A+B:=\{a+b\colon a\in A, b\in B\}$ their Minkowski sum. 

A centered closed Euclidean ball in $\RR^d$ with radius $r>0$ is denoted by $B_r^d$ and we put $\BB^d:=B_1^d$. When it is obvious from the context we might leave out the superscript $d$ and just write $B_r$. The volume of $\BB^d$ is given by
\[
    \kappa_d:=\lambda_d(\BB^d)=\frac{\pi^{d/2}}{\Gamma(\frac{d}{2}+1)},
\]
where we denote by $\lambda_d$ the Lebesgue measure on $\RR^d$. By $\tilde\sigma_{d-1}$ we denote the uniform distribution on the unit sphere $\SS^{d-1}$, this is the unique rotationally invariant probability measure on $\SS^{d-1}$ (as discussed in \cite[p.584]{SW}).

We denote by $\cF$ and $\cC$ the set of closed and compact subsets of $\RR^d$, respectively. Here, the empty set is always included and we write $\cF'$ and $\cC'$ if we explicitly want to exclude $\varnothing$. There is a natural topology on $\cF$, called the Fell topology (see \cite[Definition 2.1.1]{SW}). By a convex body in $\RR^d$ we mean a compact and convex subset of $\RR^d$. The set of all convex bodies in $\RR^d$ is denoted by $\cK$, and again, we write $\cK'$ if we want to exclude the empty set. 

In what follows we shall represent points $x\in\RR^{d}\times \RR$ in the form $x=(v,h)$ with $v\in\RR^{d}$ (called \textit{spatial} coordinate) and $h\in\RR$ (called \textit{weight}, \textit{height} or \textit{time} coordinate). By $\proj_{\RR^d}: (v,h)\mapsto v$ we denote an orthogonal projection operator from $\RR^{d}\times \RR$ to $\RR^d$.

\subsection{Random Laguerre tessellations}\label{subsec:laguerreTess}

In this subsection we recall the concept of a random tessellation and introduce the class of Poisson-Laguerre tessellations. For a more detailed discussion on general random tessellations we refer the reader to \cite[Chapter 2, Chapter 10]{SW}. 

\paragraph{Tessellations.} By a tessellation $T$ in $\RR^d$ we understand a locally finite system of convex polytopes (cells) that cover the whole space, have non-empty disjoint interiors and such that for any distinct $t,t'\in T$ their intersection $t\cap t'$ is either empty or is a face of $t$ and $t'$. Due to the letter property we may define the set of $k$-dimensional faces of a tessellation $T$ as the set of $k$-dimensional faces of all its cells. Then $T$ is called {\textit{normal}} if any $k$-dimensional face is contained in precisely $d+1-k$ cells for all $k\in\{0,1,\ldots,d-1\}$. We denote by $\TT$ the set of all tessellations in $\RR^d$ and by a {\textit{random tessellation}} we understand a particle process $X$ (i.e. point process in $\mathcal{C}'$) satisfying $X\in\TT$ almost surely (see \cite[Section~4.1]{SW} for more details). 

\paragraph{Construction of Laguerre tessellations and their duals.} Next we introduce Laguerre tessellations and give a construction based on the so called power function. Given two points $v, w \in \RR^d$ and $h \in \RR$ we define the \textit{power} of $w$ with respect to the pair $(v,h)$ as
\begin{align*}
    \pow\left(w,(v,h)\right) := \|w-v\|^2 + h.
\end{align*} 
Let $A$ be a countable set of points of the form $(v,h) \in \RR^d\times \RR$. We define the \textit{Laguerre cell} of 
$(v,h) \in A$ as
\begin{align*}
    C\left((v,h), A\right) := \big\{w \in \RR^d: \pow(w,(v,h)) \le \pow(w,(v',h')) \text{ for all } (v',h') \in A\big\}. 
\end{align*} 
The collection of all Laguerre cells of $A$ with non-vanishing topological interior is called the \textit{Laguerre diagram}
\[
    \cL(A):=\{C((v,h),A)\colon (v,h)\in A,\, \inter{C((v,h),A)}\neq\varnothing \}.
\]

An alternative description of the Laguerre cell as an intersection of closed half-spaces (which will be used later) can be obtained as follows. Given two distinct points $(v_1,h_1), (v_2,h_2) \in \RR^d \times \RR$ the set of solutions of the equation $\pow(z,(v_1,h_1)=\pow(z,(v_2,h_2))$ is a hyperplane
\[
    H\big((v_1,h_1),(v_2,h_2)\big):=\{z\in \RR^d:2\langle z, v_1-v_2\rangle = \|v_1\|^2-\|v_2\|^2+h_1-h_2\}
\]
which is perpendicular to the line passing through $v_1$ and $v_2$. We write
\[
    H^+\big((v_1,h_1),(v_2,h_2)\big):=\{z\in \RR^d:2\langle z, v_1-v_2\rangle \ge \|v_1\|^2-\|v_2\|^2+h_1-h_2\}
\]
for the closed half-space. The Laguerre cell of $(v,h)\in A$ can then be obtained as
\begin{align}\label{eq:halfspaces}
    C((v,h),A)=\bigcap_{(v',h')\in A} H^+\big((v,h),(v',h')\big).
\end{align}

It should be mentioned that by \eqref{eq:halfspaces} any non-empty Laguerre cell is convex and closed, but it is not necessarily bounded. Moreover, a Laguerre diagram is not necessarily a locally finite covering of the whole space. Thus, whether $\cL(A)$ is a tessellation or not, depends strongly on the geometric properties of the point set $A$ and it is known \cite[Proposition 1]{Sch93} that under mild regularity assumption on $A$ it holds that $\cL(A)\in\TT$. If $\cL(A)$ is a normal tessellation, then we denote by $\cL^*(A)$ the \textit{dual Laguerre tessellation}. This tessellation arises from $\cL(A)$ as follows: for distinct points $(v_1,h_1),\ldots,(v_{d+1},h_{d+1})$ the simplex $\conv(v_1,\ldots,v_{d+1})$ belongs to $\cL^*(A)$ if and only if 
\[
\bigcap_{i=1}^{d+1}C((v_i,h_i),A)\neq \varnothing.
\]
Note, that since $\cL(A)$ is normal the above intersection (if not empty) consists of a single point, which is a vertex of $\cL(A)$. It follows from \cite[Proposition 2]{Sch93} that $\cL^*(A)$ is a tessellation. 

One interesting special case arises, when all points $(v,h)\in A$ have the same weight $h\equiv h_0\in \RR$. In this case the diagram $\cL(A)$ coincides with the well-known \textit{Voronoi diagram} $\cV(A')$ of the set $A':=\{v\colon (v,h)\in A\}$. Again, if $\cV(A')$ is a normal tessellation, then the dual tessellation $\cD(A')$ is called \textit{Delaunay tessellation}.

\medskip

\paragraph{Models of random tessellations.} In what follow we will be interested in random Laguerre tessellations, which are obtained by choosing $A$ to be a Poisson point processes in $\RR^d\times \RR$. More precisely, we consider for an interval $E\subset \RR$ the class
\[
    L_{{\rm loc}}^{1,+}(E):=\Big\{f\colon \inter E \to \RR_+ \colon \int_Kf(x)\dd x<\infty \, \forall K\subset E, \, K\text{ compact}\Big\},
\] 
of non-negative, locally integrable functions on $E$ and given a function $f\in L_{\rm loc}^{1,+}(E)$ we choose $A$ to be a Poisson point process $\eta_{f}$ with intensity measure of the form
\begin{equation}\label{eq:IntensityMeasure}
    \Lambda_{f}(\cdot)= \int_{\RR^d}\int_{E} f(h) \ind((v,h)\in \cdot)\,\dd h \,\dd v,
\end{equation} 
which is a locally finite diffuse measure on $\RR^d\times E$. We additionally assume, that $\Lambda_f$ is non-trivial, namely $\Lambda_f([0,1]^d\times \RR)>0$. These settings have been introduced in \cite{gusakova2024PLT}, where it has been shown that the diagram $\cL(\eta_{f})$ is an almost surely normal random tessellation, if $f$ fulfills some natural integrability conditions. In order to describe these conditions we will need the notion of a fractional integral. For $\alpha >0$ and a measurable function $f\colon \RR\to \RR_+$ we denote by
\begin{equation*}
    \big(\frI^{\alpha}f\big)(x):=\frac{1}{\Gamma\left(\alpha\right)} \int_{-\infty}^{x} f(t) (x-t)^{\alpha-1}\dd t,\qquad x>0,
\end{equation*}
the \textit{fractional integral} of $f$ of order $\alpha$. We also note that any non-negative measurable function $f$ fulfills the semigroup property, i.e. for $\alpha, \beta > 0$ we have
\begin{equation}\label{eq:SemigroupProp}
    \frI^\alpha \frI^\beta f=\frI^\beta \frI^\alpha f = \frI^{\alpha+\beta} f.
\end{equation}

We will call a function $f\in L_{\rm loc}^{1,+}(E)$ \textit{admissible} if \cite[Definition 3.4]{gusakova2024PLT}: 
\begin{itemize}
    \item (i) $E=[a,\infty)$ for some $a\in \RR$; (ii) $E=(-\infty,b)$ for some $b\in \RR$; or (iii) $E=\RR$,
    \item $\big(\frI^{{d\over 2}+1}f\big)(t)<\infty$ for all $t\in E$,
    \item if $E$ is of type (ii), then there exists $\varepsilon>0$ and $n_0\in\NN$, such that for all $n>n_0$ it holds that
    \[
        \big(\frI^{{d\over 2}+1}f\big)(b-1/n)\ge n^{\varepsilon}.
    \]
\end{itemize} 
If $f$ is admissible, then $\cL(\eta_{f})$ is an almost surely normal random tessellation and its dual tessellation $\cL^*(\eta_{f})$ is an almost surely simplicial random tessellation in $\RR^{d}$ (see \cite[Theorem 3.3]{gusakova2024PLT}). It should also be noted, that if $E=[a,\infty)$, then any function $f\in L_{\rm loc}^{1,+}(E)$ is admissible \cite[Proposition 3.6]{gusakova2024PLT}. We will call the class of Laguerre tessellations $\cL(\eta_{f})$ with $f$ admissible the \textit{Poisson-Laguerre tessellations} with density. The following useful identity describes the behavior of the Poisson-Laguerre tessellation $\cL(\eta_{f})$ under linear transformations of the density $f$ (see \cite[Proposition 3.12]{gusakova2024PLT}): for a function $\varphi:\RR\mapsto\RR$, $\varphi(x)=\lambda x+c$ with $\lambda>0$ and $c\in\RR$, we have
\begin{equation}\label{eq:LinearTransformation}
    \cL\big(\eta_{f}\big)\overset{d}{=}\sqrt{\lambda}\,\cL\Big(\eta_{\lambda^{{d\over 2}+1}(f\circ \varphi)}\Big),
\end{equation}
where for $T\in \TT$ and $a>0$ we write $aT:=\{at\colon t\in T\}$.

Another model we will be working with is obtained as follows. Let $\eta^{\gamma}$ be a homogeneous Poisson point process on $\RR^d$ with intensity $\gamma >0$. We will view $\eta^{\gamma}$ as a point process in $\RR^d\times \RR$ by identifying every point $v\in  \eta^{\gamma}$ with the point $(v,0)\in \RR^d\times \RR$. The random tessellation $\cL(\eta^{\gamma})$ is called \textit{Poisson-Voronoi tessellation} and its dual tessellation $\cL^*(\eta^{\gamma})$ \textit{Poisson-Delaunay tessellation}. These are the classical models of random tessellations which have been studied in detail, see \cite[Section~10.2]{SW}. It should be noted here, that the Poisson-Voronoi tessellation does not belong to the class of Poisson-Laguerre tessellations with density.

As it is common to identify a simple stationary point process $\eta$ with its support, we write $\eta\cap K$ for the restriction of $\eta$ to $K$, where $K\subset\RR^d\times \RR$. When $K$ is bounded, we have that $\eta\cap K$ is almost surely finite. We note that if $\cL(\eta)$ is a tessellation, then the Laguerre diagram $\cL\big(\eta\cap{K}\big)$ can still be constructed and is as well a locally finite system of convex sets, that covers $\RR^d$, but these sets are not necessarily compact anymore. Constructing the dual diagram $\cL^*\big(\eta\cap{K}\big)$ is still possible but now leads to a locally finite system of convex compact sets, which do not cover the whole space. 

\section{Limit theorems for random Laguerre tessellations}\label{sec:limitTess}

In this section we present the main results of this article on convergence of Poisson-Laguerre tessellations. We begin by introducing the notions of convergence for random tessellations that we will use throughout.

\subsection{Convergence of random tessellations and random closed sets}\label{subsec:randomTessAndClosedSets}

It should be noted, that the space of tessellations $\TT$ does not admit a natural topological structure and it is not known, whether this space is Polish (see discussion in \cite[Section 2.3]{HT19}). In order to overcome this difficulty it will often be convenient to identify a stationary random tessellation $X$ with its skeleton, i.e.
\[
    \sk{X}=\skel(X) := \bigcup_{c\in X}\bd c.
\]
By the discussion in \cite[p.\ 464]{SW} it follows that $\sk{X}$ is a stationary \textit{random closed set}, namely a measurable map from some probability space $\Omega$ to $\cF$ equipped with the Fell topology. In particular, it is known that $\cF$ is a Polish space \cite[p. 567]{Mol17} with possible metric given by \cite[Equation (C.1)]{Mol17}. In what follows, we will denote the skeleton of the random Poisson-Laguerre tessellation $\cL(\eta)$ by $\sk{L}(\eta)$ and the skeleton of the dual Poisson-Laguerre tessellation $\cL^*(\eta)$ by $\sk{L}^*(\eta)$.

In this article, by convergence of random tessellations we mean the convergence of the corresponding skeletons. There are several different notions for convergence for a sequence $(Z_n)_{n\in\NN}$ of random closed sets (see \cite{Mol17} more details). The first natural notion is \textit{weak convergence} \cite[Definition 1.7.1]{Mol17}. Instead of the definition we will rely on the following useful criteria \cite[Theorem 1.7.7]{Mol17} formulated in terms of pointwise convergence of the corresponding capacity functionals. The \textit{capacity functional} $T_Z$ of a random closed set $Z$ is defined by $T_Z(C):=\PP(Z\cap C\neq \varnothing)$ for $C\in \cC$. Then a family of random closed sets $(Z_n)_{n\in\NN}$ converges weakly to a random closed set $Z$ if and only if
\begin{equation}\label{eq:CapacityConvergence}
    \lim_{n\to \infty}T_{Z_n}(C) = T_Z(C)
\end{equation}
for all $C \in \cC$ such that $T_Z(\inter C)=T_Z(\cl C)$. Another notion is \textit{convergence in probability} \cite[Definition 1.7.25]{Mol17}. We say that a family of random closed sets $(Z_n)_{n\in\NN}$ converges in probability to a random closed set $Z$ if for any $\varepsilon>0$  and $C\in\cC$ we have
\begin{equation}\label{eq:ConvProb}
    \lim_{n\to\infty}\PP\big([(Z_n\setminus Z^{\varepsilon-})\cup (Z\setminus Z_n^{\varepsilon-})]\cap C\neq \varnothing\big)=0,
\end{equation}
where for a closed set $F$ the set $F^{\varepsilon-}$ is the open $\varepsilon$-envelop of $F$, namely
\[
F^{\varepsilon-}=\{z\in\RR^d\colon \inf_{x\in F}\|z-x\|<\varepsilon\}.
\]
This notion of convergence is stronger than the weak convergence, see \cite[Corollary 1.7.29]{Mol17}. 

We note that the Fell topology and the corresponding weak convergence is widely used in the theory of random closed sets. At the same time, when applied to skeletons of random tessellations, weak convergence does not imply the convergence of many interesting quantities and in particular it does not imply the convergence of the typical cell (see Section \ref{sec:TypicalCell} for further discussion). This motivates us to introduce another much stronger notion of convergence.

\begin{definition}\label{def:StrongConvergence}
    We say that a family of random closed sets $(Z_n)_{n\in\NN}$ \textit{converges locally with high probability} to a random closed set $Z$ if for every $R>0$ it holds that
    \[
    \lim_{n\to\infty}\PP(Z_n\cap B_{R}\neq Z\cap B_R)=0.
    \]
\end{definition}

\begin{proposition}\label{prop:ImplicationWeakConv}
    If $(Z_n)_{n\in\NN}$ converges to $Z$ locally with high probability, then $(Z_n)_{n\in\NN}$ converges to $Z$ in probability and weakly.
\end{proposition}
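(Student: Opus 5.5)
We first prove convergence in probability directly from \eqref{eq:ConvProb}; weak convergence then follows from \cite[Corollary 1.7.29]{Mol17}, which says convergence in probability implies weak convergence.

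Fix $\varepsilon>0$ and a compact set $C\in\cC$. Since $C$ is bounded, there is $R>0$ with $C\subset B_R$. Suppose the event $\{Z_n\cap B_R=Z\cap B_R\}$ occurs, and take any $z\in(Z_n\setminus Z^{\varepsilon-})\cap C$.
\begin{itemize}
\item Since $z\in C\subset B_R$ and $z\in Z_n$, we get $z\in Z_n\cap B_R=Z\cap B_R$.
\item Hence $z\in Z$, and therefore $z\in Z^{\varepsilon-}$, because every point of a closed set lies at distance $0<\varepsilon$ from it.
\end{itemize}
This contradicts $z\notin Z^{\varepsilon-}$. The symmetric argument shows $(Z\setminus Z_n^{\varepsilon-})\cap C=\varnothing$ on the same event. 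Therefore
\[
\PP\big([(Z_n\setminus Z^{\varepsilon-})\cup(Z\setminus Z_n^{\varepsilon-})]\cap C\neq\varnothing\big)\le \PP(Z_n\cap B_R\neq Z\cap B_R)\ton 0,
\]
which is exactly \eqref{eq:ConvProb}.

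We add a measurability remark. The event $\{Z_n\cap B_R\neq Z\cap B_R\}$ is implicitly assumed measurable in Definition \ref{def:StrongConvergence}. If needed, one can work with outer probabilities, since the event in \eqref{eq:ConvProb} is contained in it.

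Alternatively, weak convergence can be checked directly through capacity functionals, using \eqref{eq:CapacityConvergence}. For $C\in\cC$, choose $R$ with $C\subset B_R$. On the event $\{Z_n\cap B_R=Z\cap B_R\}$ we have $\{Z_n\cap C\neq\varnothing\}=\{Z\cap C\neq\varnothing\}$. Hence
\[
|T_{Z_n}(C)-T_Z(C)|\le\PP(Z_n\cap B_R\neq Z\cap B_R)\to0
\]
for every compact $C$, not only for continuity sets.

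There is no genuine obstacle in either argument. The only point needing care is that every compact set lies in some ball $B_R$, which makes the locality assumption applicable.
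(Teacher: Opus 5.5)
Your proof is correct and follows essentially the same route as the paper: both choose $R$ with $C\subset B_R$, show that on the event $\{Z_n\cap B_R=Z\cap B_R\}$ the set in \eqref{eq:ConvProb} intersected with $C$ is empty, and then obtain weak convergence from \cite[Corollary 1.7.29]{Mol17}. Your alternative check via capacity functionals is also valid and is slightly stronger: it gives $T_{Z_n}(C)\to T_Z(C)$ for every compact $C$, not only for continuity sets, and it does not need the Molchanov corollary.
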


\begin{proof}
    Let $C\in\cC$ be arbitrary and let $R>0$ be such that $C\subset B_R$. Further let $\varepsilon >0$, $Y_n:=Z_n\cap B_{R}$ and $Y:=Z\cap B_{R}$. We note that
    \begin{align*}
    (Z_n\setminus Z^{\varepsilon-})\cap C&=(Z_n\cap C)\setminus (Z^{\varepsilon-}\cap C)=(Y_n\cap C)\setminus (Z^{\varepsilon-}\cap B_R\cap C)=(Y_n\setminus (Z^{\varepsilon-}\cap B_R))\cap C.
    \end{align*}
    Similarly 
    \begin{align*}
    (Z\setminus Z_n^{\varepsilon-})\cap C&=(Y\setminus (Z_n^{\varepsilon-}\cap B_R))\cap C.
    \end{align*}
    Assume $Y_n=Y$, then $(Y_n\setminus (Z^{\varepsilon-}\cap B_R))\cup (Y\setminus(Z_n^{\varepsilon-}\cap B_R))=\varnothing$ for any $\varepsilon >0$. Hence,
    \[
    \lim_{n\to\infty}\PP\big([(Z_n\setminus Z^{\varepsilon-})\cup (Z\setminus Z_n^{\varepsilon-})]\cap C\neq \varnothing\big)\leq \lim_{n\to\infty}\PP(Y_n\neq Y)=0,
    \]
    implying that $(Z_n)_{n\in\NN}$ converges to $Z$ in probability and as a consequence of \cite[Corollary 1.7.29]{Mol17} also weakly.
\end{proof}

In what follows, for simplicity, we will sometimes abuse the notation and say that the sequence of random tessellations $(\cT)_{n\in\NN}$ converges weakly/in probability/locally with high probability if the sequence of corresponding skeletons converge in this sense.

\subsection{Convergence in the class of Poisson-Laguerre tessellations with densities}\label{sec:LaguerreConvergence}

We start by establishing sufficient conditions for a sequence of Poisson-Laguerre tessellations with densities that ensure the convergence to another Poisson-Laguerre tessellation with a density. Let $(f_n)_{n\in\NN}$ be a sequence ofadmissible functions satisfying the following conditions:
\begin{enumerate}
    \item[(C1)] 
    \begin{enumerate}
        \item[(i)] For any $n\in\NN$ we have that $f_n\in L_{\rm loc}^{1,+}(E_n)$ with $E_n\uparrow E$ as $n\to\infty$;
        \item[(ii)]  for all $x\in E$ we have 
        \begin{align*}
            \lim_{n\to\infty} \int_{-\infty}^x |f_n(h)-f(h)|\dd h = 0, 
        \end{align*}
        with some admissible function $f\in L_{\rm loc}^{1,+}(E)$, satisfying $(\frI^1f)(1/2)>0$;
        \item[(iii)] and for some fixed $x_0\in\RR$ and some $\delta>0$ it holds that
        \[
            M:=\sup_{n\in\NN}\int_{-\infty}^{x_0}|h|^{{d\over 2}+\delta}f_n(h)\dd h<\infty.
        \]
    \end{enumerate}
    \label{item:C1}
\end{enumerate}

We note that the condition $(\frI^1 f)(1/2)>0$ is not restrictive since $\lim_{x\to\infty}(\frI^1 f)(x)>0$ and $\frI^1 f$ is monotone. Hence, there exists $a\in\RR$ such that $(\frI^1 f)(a)>0$. Since by \eqref{eq:LinearTransformation} the distribution of $\cL(f)$ does not change under applying the transformation $\varphi(x)=x+(1/2-a)$ to $f$ we may without loss of generality assume that $a=1/2$. This choice will be convenient for the proof.

The main result of this subsection is the following theorem, which will be proven in Section \ref{sec:ProofStrongProbConvLaguerre}.

\begin{theorem}\label{thm:StrongProbConvLaguerre}
    Let $\eta_{n}:=\eta_{f_n}$, $n\in\NN$, be a sequence of Poisson point processes with densities $(f_{n})_{n\in\NN}$ satisfying \hyperref[item:C1]{(C1)} and let $\eta:=\eta_{f}$. Then there exist point processes ${\widetilde\eta}$ and $({\widetilde\eta}_n)_{n\in\NN}$ defined on a common probability space with $\widetilde\eta\overset{d}{=}\eta$ and ${\widetilde\eta}_n\overset{d}{=}\eta_n$ for all $n\in \NN$, such that 
    \begin{enumerate}
        \item[(i)] $\sk{L}(\widetilde\eta_n)$ converges locally with high probability to $\sk{L}(\widetilde\eta)$ as $n\to\infty$;
        \item[(ii)] $\sk{L}^*(\widetilde\eta_n)$ converges locally with high probability to $\sk{L}^*(\widetilde\eta)$ as $n\to\infty$.
    \end{enumerate}
\end{theorem}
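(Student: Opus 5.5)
We couple all processes through a single Poisson process on a larger space. Let $\xi$ be a Poisson point process on $\RR^d\times\RR\times[0,\infty)$ with Lebesgue intensity, and set
\[
\widetilde\eta_n:=\{(v,h)\colon (v,h,u)\in\xi,\ u\le f_n(h)\},\qquad \widetilde\eta:=\{(v,h)\colon (v,h,u)\in\xi,\ u\le f(h)\}.
\]
By the mapping/restriction theorem, $\widetilde\eta_n\overset{d}{=}\eta_n$ and $\widetilde\eta\overset{d}{=}\eta$. For a set $K=B_\rho\times(-\infty,t]$, the symmetric difference $(\widetilde\eta_n\,\triangle\,\widetilde\eta)\cap K$ has expected cardinality
\[
\kappa_d\rho^d\int_{-\infty}^t|f_n(h)-f(h)|\,\dd h ,
\]
which tends to $0$ by (C1)(ii). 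Hence, for every fixed $\rho$ and $t$, $\PP(\widetilde\eta_n\cap K=\widetilde\eta\cap K)\to1$.

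The core step is \emph{stabilization}: we show that the skeleton inside $B_R$ is determined by the points in such a window $K$. Precisely, we seek events $A_{\rho,t}$ with $\PP(A_{\rho,t})\ge1-\varepsilon$, uniformly in $n$ and for the limit process, on which
\[
\sk{L}(\widetilde\eta_n)\cap B_R=\sk{L}(\widetilde\eta_n\cap K)\cap B_R ,
\]
and similarly for $\sk{L}^*$.

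The natural argument is through heights. Let $T$ be the maximal power value that any cell meeting $B_{R}$ attains on $B_R$. First, $T$ is bounded in probability uniformly in $n$. Indeed, every $w\in B_R$ has $\min_{x\in\widetilde\eta_n}\pow(w,x)\le s$ as soon as the paraboloid region $\{(v,h)\colon \|w-v\|^2+h\le s\}$ contains a point. This region has $\Lambda_{f_n}$-measure $\kappa_d\,\Gamma(\tfrac d2+1)(\frI^{d/2+1}f_n)(s)$, and by local $L^1$ convergence this is bounded below uniformly in $n$ once $s$ is large; here the normalization $(\frI^1f)(1/2)>0$ is used. A covering of $B_R$ by finitely many small balls turns this into a bound on $T$. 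Any point $(v,h)$ whose cell meets $B_R$ must then satisfy $\|w-v\|^2+h\le T$, so $h\le T$ and $\|v\|\le R+\sqrt{T-h}$.

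Second, we must exclude points with very negative $h$. These are the points lying far away spatially, and we need
\[
\EE\,\#\Bigl\{(v,h)\in\widetilde\eta_n\colon h\le x_0,\ \|v\|\le R+\sqrt{s-h}\Bigr\}<\infty
\]
uniformly in $n$. This expectation is bounded by a constant times $\int_{-\infty}^{x_0}(R+\sqrt{s-h})^d f_n(h)\,\dd h$, which is controlled by the moment bound (C1)(iii) via $|h|^{d/2}$. The extra $\delta$ gives uniform integrability of the tails, so the probability that some relevant point has $h<-t$ tends to $0$ as $t\to\infty$, uniformly in $n$. Consequently, on the event $\{T\le s\}$ the cells meeting $B_R$, and their mutual boundaries, are generated by points in $K=B_{\rho}\times[-t,s]$ with $\rho=R+\sqrt{s+t}$. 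We also have to check that points outside $K$ cannot cut these cells within $B_R$. This holds because such a point has power larger than $T$ everywhere on $B_R$, so it cannot be a minimizer there.

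For the dual skeleton, the argument is the same with $B_R$ replaced by a random but tight region. A simplex of $\cL^*$ meeting $B_R$ corresponds to a vertex of $\cL$ whose $d+1$ nuclei have a simplex meeting $B_R$. We need tightness of the circum-power of such simplices, uniformly in $n$. We handle this by taking $R'\gg R$ and showing that, with high probability, every Delaunay simplex meeting $B_R$ has its dual vertex in $B_{R'}$, using that cells meeting $B_R$ have nuclei in a bounded region. The statement then follows from the stabilization applied to $B_{R'}$.

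The \emph{main obstacle} is exactly this stabilization: small perturbations far away, in particular very low points, can create new cells. The uniformity in $n$ of the tail estimate relies on (C1)(iii), and this is where I expect the most care to be needed. Combining the two steps gives
\[
\PP\bigl(\sk{L}(\widetilde\eta_n)\cap B_R\ne\sk{L}(\widetilde\eta)\cap B_R\bigr)\le 2\varepsilon+\PP\bigl(\widetilde\eta_n\cap K\ne\widetilde\eta\cap K\bigr)\to2\varepsilon ,
\]
and letting $\varepsilon\to0$ proves (i); (ii) follows in the same way.
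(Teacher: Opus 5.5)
\textbf{Part (i) and the coupling.} These are sound, and your coupling is cleaner than the paper's. The paper maximally couples $\eta_n$ and $\eta$ on growing windows $K_1(r_n/2,r_n)$ with a diagonally chosen $r_n$. Your thinning of a single Poisson process on $\RR^d\times\RR\times[0,\infty)$ couples all windows at once, so you can fix $K=B_\rho\times[-t,s]$ first and then let $n\to\infty$.

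One detail in Step 1 needs fixing. The pointwise statement ``$\min_x\pow(w,x)\le s$ once the paraboloid region at $w$ is non-empty'' does not pass to a small ball around $w$. Over a ball of radius $\epsilon$ the power of a point $(v,h)$ can increase by $2\epsilon\sqrt{s-h}+\epsilon^2$, which is unbounded as $h\to-\infty$. Use instead the shrunken region $\{h\le s-4a^2,\ \|v\|\le\sqrt{s-h}-a\}$, as in the paper's $K_3(a,T)$. Its points have power $\le s$ on all of $B_a$, and its $\Lambda_{f_n}$-measure is still at least $c\,(\frI^{d/2+1}f_n)(s-4a^2)$.

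\textbf{The gap is in (ii).} The claim that, with high probability and uniformly in $n$, every simplex of $\cL^*$ meeting $B_R$ has its dual vertex in $B_{R'}$ is the hard part of dual stabilization. The justification you give does not work. The vertices of a simplex $S\in\cL^*$ meeting $B_R$ are in general not nuclei of cells meeting $B_R$:
\begin{itemize}
\item a long thin simplex can cross $B_R$ while its vertices and its dual vertex are all far away;
\item a point of $S\cap B_R$ can lie in the Laguerre cell of a nucleus that is not a vertex of $S$.
\end{itemize}
So bounding the nuclei of cells meeting $B_R$ gives no control on $S$.

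What must be excluded is a large \emph{empty} downward paraboloid whose projection meets $B_R$. Let $y\in S\cap B_R$ and $(w,q)=\apex(\Pi^-(S))$. On the event that the growth process stays above $-t$ over $B_R$ (which your Step 2 provides), the hull process also lies above $-t$ over $B_R$, since it dominates the growth process. Hence $\Pi^-(S)$ passes over $y$ at height $\ge -t$, which gives $\|w\|\le R+\sqrt{q+t}$. A dual vertex outside $B_{R'}$ therefore forces $q>(R'-R)^2-t$, together with $\min_x\pow(w,x)=q$.

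Ruling this out needs a union bound over an \emph{unbounded} set of apex positions. One way:
\begin{itemize}
\item cover $\{(w,q)\colon q\ge q_0,\ \|w\|\le R+\sqrt{q+t}\}$ by boxes of side $\asymp\sqrt q$, so only $O(1)$ boxes are needed per unit of height;
\item bound each box by an $\cH^{\max}$-type estimate of order $\exp\bigl(-c\,(\frI^1 f_n)(1/2)\,q^{d/2}\bigr)$;
\item sum over $q$.
\end{itemize}
This is the content of the paper's Lemma \ref{lm:StabRadius} (the event $\cE$). It does not follow from primal stabilization on $B_{R'}$, because control of the growth process on a fixed ball $B_{R'}$ says nothing about apexes outside that ball. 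Once this estimate is supplied, your reduction of (ii) to stabilization on $B_{R'}$, and the rest of your argument, go through.
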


\begin{remark}
    We believe that our approach extends to the situation when the intensity measure of Poisson point processes $\eta_n$ has the form $\lambda_d\otimes \Lambda_n$ for some locally finite measure $\Lambda_n$ on $\RR$ satisfying additional integrability assumption. One would have to ensure first that the corresponding Laguerre tessellation $\cL(\eta_n)$ is well-defined following the steps of \cite[Theorem 3.3]{gusakova2024PLT}. In order to keep the article short and for simplicity we omit considering this general setup here and concentrate of the model introduced in \cite{gusakova2024PLT}.
\end{remark}

As a direct consequence of Theorem \ref{thm:StrongProbConvLaguerre} and Proposition \ref{prop:ImplicationWeakConv} we obtain.

\begin{theorem}\label{thm:weakConvPPPonSameSpaceDual}
   Let $\eta_{n}:=\eta_{f_n}$, $n\in\NN$, be a sequence of Poisson point processes with densities $(f_{n})_{n\in\NN}$ satisfying \hyperref[item:C1]{(C1)} and let $\eta:=\eta_{f}$. Then the random closed sets $\sk{L}(\eta_n)$ and $\sk{L}^*(\eta_n)$ converge weakly to $\sk{L}(\eta)$ and $\sk{L}^*(\eta)$, respectively, as $n\to\infty$.
\end{theorem}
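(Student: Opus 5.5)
The statement follows directly from Theorem \ref{thm:StrongProbConvLaguerre} together with Proposition \ref{prop:ImplicationWeakConv}. The plan is to pass to the coupled processes, obtain weak convergence there, and then return to the original processes, using that weak convergence depends only on the laws of the random closed sets.

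First, Theorem \ref{thm:StrongProbConvLaguerre} gives processes $\widetilde\eta$ and $(\widetilde\eta_n)_{n\in\NN}$ on a common probability space with $\widetilde\eta\overset{d}{=}\eta$ and $\widetilde\eta_n\overset{d}{=}\eta_n$. On this space, $\sk{L}(\widetilde\eta_n)\to\sk{L}(\widetilde\eta)$ and $\sk{L}^*(\widetilde\eta_n)\to\sk{L}^*(\widetilde\eta)$ locally with high probability. Proposition \ref{prop:ImplicationWeakConv} then gives convergence in probability, and hence weak convergence, for both sequences of coupled skeletons.

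The only point needing care is the transfer back to $\eta_n$ and $\eta$. Here I use that the skeleton is a measurable function of the point configuration. More precisely, the map sending a locally finite configuration $A$ for which $\cL(A)$ is a (normal) tessellation to $\skel(\cL(A))\in\cF$, and likewise to $\skel(\cL^*(A))$, is measurable. This is implicit in the paper's statement that $\sk{X}$ is a random closed set, by \cite[p.\ 464]{SW}. Consequently $\sk{L}(\widetilde\eta_n)\overset{d}{=}\sk{L}(\eta_n)$, and the same holds for the limits and the dual skeletons. In particular the capacity functionals agree:
\[
T_{\sk{L}(\widetilde\eta_n)}(C)=\PP\big(\sk{L}(\widetilde\eta_n)\cap C\neq\varnothing\big)=T_{\sk{L}(\eta_n)}(C)\quad\text{for all } C\in\cC.
\]
The criterion \eqref{eq:CapacityConvergence} involves only capacity functionals, so the coupled convergence implies $\lim_{n\to\infty}T_{\sk{L}(\eta_n)}(C)=T_{\sk{L}(\eta)}(C)$ for every $C\in\cC$ with $T_{\sk{L}(\eta)}(\inter C)=T_{\sk{L}(\eta)}(\cl C)$. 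The identical argument applies to $\sk{L}^*$.

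The almost sure normality needed to define $\cL^*$ holds both for $\eta_n$ and for the copies $\widetilde\eta_n$, because $f_n$ and $f$ are admissible and equality in law preserves almost sure properties. On the null set where normality fails, the skeleton can be set to $\varnothing$ without changing any of the laws.
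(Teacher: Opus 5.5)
Your proposal is correct and matches the paper's route: the paper states this theorem as a direct consequence of Theorem \ref{thm:StrongProbConvLaguerre} and Proposition \ref{prop:ImplicationWeakConv}. It leaves implicit the transfer from the coupled processes back to $\eta_n$ and $\eta$, which holds because weak convergence depends only on the laws of the skeletons. Your explicit treatment of that transfer, via measurability of the configuration-to-skeleton map and equality of capacity functionals, is a clarification at the same level of rigor as the paper, not a different argument.
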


\subsection{Poisson-Voronoi tessellation as weak limit of Poisson-Laguerre tessellations}\label{sec:VoronoiLimit}

In this section we show that the skeletons of the classical Poisson-Voronoi and Poisson-Delaunay tessellations arise as a weak limit of $\sk{L}(\eta_{f_n})$ and $\sk{L}^*(\eta_{f_n})$, respectively, for a sequence of functions $(f_n)_{n\in\NN}$ satisfying the following condition:
\begin{enumerate}
    \item[(C2)] Let $(f_n)_{n\in \NN}\subset L_{\rm loc}^{1,+}(\RR_+)$ be a sequence of 
    functions satisfying 
    \begin{align*}
        \lim_{n\to\infty} (\frI^1 f_n)(x) = \gamma\in (0,\infty) \text{ for all } x>0.
    \end{align*} \label{item:C2}
    \end{enumerate}
    We note, that this setting is different from the one considered in the previous section, since the Poisson-Voronoi tessellation does not belong to the class of Poisson-Laguerre tessellations with densities.

The main result of this section is the following theorem, which will be proven in Section \ref{sec:ProofStrongConvergenceVoronoi}.

\begin{theorem}\label{thm:StrongConvergenceVoronoi}
    Let $\eta_n:=\eta_{f_n}$, $n\in\NN$, be a sequence of Poisson point processes with densities $(f_{n})_{n\in\NN}$ satisfying \hyperref[item:C2]{(C2)} with $\gamma>0$ and let $\eta^{\gamma}$ be a homogeneous Poisson point process in $\RR^d$ with intensity $\gamma$. Then there exist point processes ${\widetilde\eta}^{\gamma}$ and $({\widetilde\eta}_n)_{n\in\NN}$ defined on a common probability space with ${\widetilde\eta}^{\gamma}\overset{d}{=}\eta^\gamma$ and ${\widetilde\eta}_n\overset{d}{=}\eta_n$ for all $n\in \NN$, such that 
    \begin{enumerate}
        \item[(i)] $\sk{L}^*(\widetilde\eta_n)$ converges locally with high probability to $\sk{L}^*(\widetilde\eta^{\gamma})$ as $n\to\infty$;
        \item[(ii)] $\sk{L}(\widetilde\eta_n)$ converges in probability to $\sk{L}(\widetilde\eta^{\gamma})$ as $n\to\infty$.
    \end{enumerate}
\end{theorem}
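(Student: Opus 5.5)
We construct the coupling explicitly from a single Poisson process. Let $\xi$ be a Poisson point process on $\RR^d\times\RR_+\times[0,\infty)$ with Lebesgue intensity. For $x\ge 0$ write $F_n(x):=(\frI^1 f_n)(x)=\int_0^x f_n(h)\,\dd h$, and let $F_n^{-1}$ be its generalized inverse.

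We define $\widetilde\eta_n$ by mapping each atom $(v,u,\cdot)$ with $u< F_n(\infty)$ to $(v,F_n^{-1}(u))$. By the mapping theorem this process has intensity $\lambda_d\otimes f_n(h)\,\dd h$. The third coordinate is not needed and can be dropped, so it suffices to take $\xi$ on $\RR^d\times\RR_+$.

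We define $\widetilde\eta^\gamma:=\{v\colon (v,u)\in\xi,\ u<\gamma\}$, which is a homogeneous Poisson process of intensity $\gamma$. Condition (C2) gives $F_n(x)\to\gamma$ for every $x>0$. Hence atoms with $u<\gamma$ eventually receive heights $F_n^{-1}(u)\to 0$, while atoms with $u>\gamma$ eventually receive heights tending to $+\infty$.

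On a ball $B_{R'}$ there are a.s. finitely many atoms with $u\le\gamma+1$, and a.s. none with $u=\gamma$ exactly. So, on $B_{R'}$ and with probability tending to one, the spatial positions of $\widetilde\eta_n$ with bounded height coincide with $\widetilde\eta^\gamma\cap B_{R'}$. Their heights are uniformly at most $\varepsilon_n\to0$, and every other point has height at least $H_n\to\infty$.

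The key step is stabilization. We need a random radius $R'$, independent of $n$ and a.s. finite, such that the cells of $\cL(\widetilde\eta_n)$ and of $\cL(\widetilde\eta^\gamma)$ meeting $B_R$ are determined by points with spatial coordinate in $B_{R'}$ and bounded height. We take $R'$ from the Voronoi stabilization radius of $\widetilde\eta^\gamma$. Every point $w\in B_R$ has Voronoi distance to $\widetilde\eta^\gamma$ at most some $\rho$. Points at spatial distance greater than $\rho+1$ from $B_R$, or with height above $(\rho+1)^2$, have power at $w$ exceeding that of $w$'s nearest point. Such points therefore cannot influence the diagram in $B_R$, and the same holds for $\widetilde\eta_n$ once $\varepsilon_n$ is small. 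The lemmas of Section~\ref{sec:Technical} provide this kind of bound.

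For the Delaunay skeleton (i) we use that the Delaunay simplices meeting $B_R$ are combinatorial objects. A set of $d+1$ points forms a simplex when its circumsphere is empty; in the weighted setting, when its orthosphere is empty. By general position of $\widetilde\eta^\gamma$, each empty-sphere condition among the finitely many relevant points holds with a strict margin. Perturbing the weights by $\varepsilon_n\to0$ therefore preserves all these conditions, so the combinatorics agree. Since the vertices are the same points $v$, the simplices are identical, and $\sk{L}^*(\widetilde\eta_n)\cap B_R=\sk{L}^*(\widetilde\eta^\gamma)\cap B_R$ with high probability.

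For the Voronoi skeleton (ii), the vertices of the Laguerre cells move continuously with the weights, by $O(\varepsilon_n)$, while the combinatorics remain fixed. Exact equality therefore fails, but the Hausdorff distance on $B_R$ tends to zero. This gives \eqref{eq:ConvProb} for every compact $C$.

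The main obstacle is the stabilization step. It must control, uniformly in $n$, points with heights between $\varepsilon_n$ and $H_n$, that is, points whose $u$ lies near $\gamma$. Such points might create tiny new cells. I would handle this by choosing $\delta>0$ with no atoms satisfying $u\in[\gamma-\delta,\gamma+\delta]$ in the relevant region, which holds with probability near one. Then for large $n$, $F_n^{-1}(\gamma+\delta)$ exceeds any fixed height, by (C2), and so these points are excluded.
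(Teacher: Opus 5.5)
Your proposal is correct, but it reaches the result by a genuinely different route from the paper.

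\textbf{The paper's route.} It couples separately for each $n$. A homogeneous process $\xi_n^0$ on $B_{3r_n}$ with intensity $(\frI^1 f_n)(9r_n^2/4)$ is maximally coupled with $\widetilde\eta^{\gamma}\cap B_{3r_n}$, and i.i.d.\ height marks are attached to it. The paper then bounds the expected number of $(d+1)$-tuples whose Delaunay status changes under the marks, or whose Laguerre vertex moves by more than $\varepsilon$. This uses the multivariate Mecke formula and the substitution \eqref{eq:transformation}. It reduces everything to Lemma~\ref{lm:conditionsImply}, i.e.\ the concentration of $g_{n,s}$ on $\SS^{d-1}$, followed by a diagonal choice of $r_n$.

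\textbf{Your route.} Your quantile coupling $(v,u)\mapsto(v,F_n^{-1}(u))$ from a single Poisson process on $\RR^d\times\RR_+$ makes the convergence pathwise. On any bounded region, almost surely for all large $n$, the points of $\widetilde\eta_n$ below a fixed height are exactly the points of $\widetilde\eta^\gamma$, with weights tending to $0$. All other points escape to large height. The Mecke computation is then replaced by a deterministic stability statement for finite configurations in general position. Strict empty-sphere inequalities survive small weight changes, and Laguerre vertices depend affinely on the weights.

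\textbf{What each buys.} Your approach avoids Lemmas~\ref{lm:continuity} and~\ref{lm:conditionsImply} and any tuning of $r_n$: you let $n\to\infty$ for a fixed stabilization radius $r$, then let $r\to\infty$. With a pathwise stabilization radius it even yields almost sure eventual equality of $\sk{L}^*$ on $B_R$. The price is that it is purely qualitative. The paper's bounds \eqref{eq:05_01_26_eq8} and \eqref{eq:05_01_26_eq11} are explicit in terms of $f_n$.

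\textbf{One step to make precise.} For (i), the Voronoi radius of $\widetilde\eta^\gamma$ on $B_R$ alone does not control the Delaunay simplices meeting $B_R$. Their circumcentres, or the apices of their paraboloids in the weighted case, may lie outside $B_R$. You should instead use the event $\cE(\cdot,R,r)$ from Lemmas~\ref{lm:StabRadius} and~\ref{lm:Stabilisation} with $t=0$. Its complement has probability at most $Ce^{-c\gamma r^d}$, uniformly in large $n$ since $(\frI^1 f_n)(1/2)\to\gamma$. Alternatively, use a covering argument. If the points of $\widetilde\eta^\gamma$ come within distance $\rho_0/4$ of every point of $B_{R+\rho_0}$, they force every empty paraboloid whose simplex meets $B_R$ to have radius below $\rho_0$.
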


\begin{remark}\label{rem:NoStrongConvVoronoi}
    We note that the part (ii) of Theorem \ref{thm:StrongConvergenceVoronoi} cannot be extended to the stronger notion of "local convergence with high probability" using our method. The obstacle lies in the construction of the tessellation $\sk{L}(\widetilde\eta_n)$. The intuition behind the proof is as follows. We find a coupling of Poisson point processes $((\widetilde\eta_n)_{n\in\NN},\widetilde\eta^{\gamma})$, such that (locally) we may view the point process $\widetilde\eta_n$ for $n$ large enough as the point process $\widetilde\eta^{\gamma}$ where each point $(v,0)\in \widetilde\eta^{\gamma}$ is lifted independently and randomly along the height axes and this "distortion" is getting small as $n\to\infty$. 
    
    Note that the vertices of the dual Laguerre tessellation $\sk{L}^*(\widetilde\eta_n)$ are formed by the spatial coordinates $v$ of the points $(v,h)\in \widetilde\eta_n$, although not every point $(v,h)$ give rise to a vertex. This means that the small "distortion" of the point process $\widetilde\eta^{\gamma}$ will either not change the tessellation at all or will result in disappearance of some vertices. Thus, with our construction $\sk{L}^*(\widetilde\eta_n)$ and $\sk{L}^*(\widetilde\eta^{\gamma})$ will not coincide (locally) only if not every $(v,h)\in \widetilde\eta_n$ give rise to a vertex of $\sk{L}^*(\widetilde\eta_n)$. The probability of this event is getting small as the "distortion" is getting small.

    At the same time in case of Laguerre tessellation every small "distortion" of the point process $\widetilde\eta^{\gamma}$ will result in small "distortion" of the boundaries of the cells, meaning that with our construction the probability that $\sk{L}(\widetilde\eta_n)$ and $\sk{L}(\widetilde\eta^{\gamma})$ coincide locally is always $0$ for any $n\in\NN$.
\end{remark}

As a direct consequence of the above theorem and Proposition \ref{prop:ImplicationWeakConv} we obtain the following result.

\begin{theorem}\label{thm:weakConv}
   Let $\eta_n:=\eta_{f_n}$, $n\in\NN$, be a sequence of Poisson point processes with densities $(f_{n})_{n\in\NN}$ satisfying \hyperref[item:C2]{(C2)} with $\gamma>0$ and let $\eta^{\gamma}$ be a homogeneous Poisson point process in $\RR^d$ with intensity $\gamma$. Then the random closed sets $\sk{L}(\eta_n)$ and $\sk{L}^*(\eta_n)$ converge weakly to $\sk{L}(\eta^{\gamma})$ and $\sk{L}^*(\eta^{\gamma})$, respectively, as $n\to\infty$.
\end{theorem}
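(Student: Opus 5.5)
The statement is Theorem \ref{thm:weakConv}, and I would deduce it directly from Theorem \ref{thm:StrongConvergenceVoronoi}. The only observation needed is that weak convergence of random closed sets depends only on the laws of the $Z_n$ and of $Z$. By \eqref{eq:CapacityConvergence} it is characterised through the capacity functionals $T_{Z_n}(C)=\PP(Z_n\cap C\neq\varnothing)$, and these depend only on the distributions.

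The steps are as follows. First, fix the coupling $(\widetilde\eta^{\gamma},(\widetilde\eta_n)_{n\in\NN})$ from Theorem \ref{thm:StrongConvergenceVoronoi}. Second, for the dual skeletons, part (i) gives convergence of $\sk{L}^*(\widetilde\eta_n)$ to $\sk{L}^*(\widetilde\eta^{\gamma})$ locally with high probability, and Proposition \ref{prop:ImplicationWeakConv} upgrades this to weak convergence. Third, for the primal skeletons, part (ii) gives convergence in probability, and \cite[Corollary 1.7.29]{Mol17} (also used in Proposition \ref{prop:ImplicationWeakConv}) shows that this implies weak convergence.

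Fourth, I transfer the result back to the original processes. Since $\widetilde\eta_n\overset{d}{=}\eta_n$, and the skeleton is a measurable function of the point configuration, we have $\sk{L}(\widetilde\eta_n)\overset{d}{=}\sk{L}(\eta_n)$ as random closed sets, and likewise for the dual skeletons and for the limit processes. Measurability here follows as in the discussion preceding Definition \ref{def:StrongConvergence}, via \cite[p.~464]{SW}. Hence the capacity functionals coincide, so \eqref{eq:CapacityConvergence} holds for the original sequences with the same continuity sets $C$.

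I expect no real obstacle. The only point to state explicitly is that equality in law is preserved under the measurable skeleton map, so that the weak convergence proved for the coupled versions carries over unchanged to $\sk{L}(\eta_n)$ and $\sk{L}^*(\eta_n)$.
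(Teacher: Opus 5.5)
Your proposal is correct and follows the paper's argument. The paper obtains Theorem \ref{thm:weakConv} as a direct consequence of Theorem \ref{thm:StrongConvergenceVoronoi} and Proposition \ref{prop:ImplicationWeakConv}, whose proof contains the step from convergence in probability to weak convergence via \cite[Corollary 1.7.29]{Mol17}; your explicit transfer of weak convergence from the coupled processes $\widetilde\eta_n$, $\widetilde\eta^{\gamma}$ back to $\eta_n$, $\eta^{\gamma}$ through equality in law is a detail the paper leaves implicit.
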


\begin{remark}
Note, that the Poisson point process $\eta^{\gamma}$ (embedded in $\RR^d\times\RR$ by identifying $v\in\eta^{\gamma}$ with $(v,0)$) has intensity measure $\gamma \lambda_d\otimes \delta_0$, where $\delta_0$ denotes the Dirac mass at $0$, which is not of the form \eqref{eq:IntensityMeasure}. Thus, the Poisson-Voronoi tessellation $\cL(\eta^{\gamma})$ does not belong to the family of Poisson-Laguerre tessellations with densities. On the other hand, by 
Theorem \ref{thm:weakConv} the classical Poisson-Voronoi and Poisson-Delaunay tessellations can be included in the family as limiting cases.
\end{remark}

\subsection{Convergence of typical cells}\label{sec:TypicalCell}

In this section let $\cT$ be stationary random tessellation. An important characteristic of $\cT$ is its typical cell, which may intuitively be viewed as a random polytope representing a cells of $\cT$ chosen "uniformly at random". More precisely, let $z:\cC'\times \TT\to \RR^d$ be a measurable function satisfying $z(C+v,T+v)=z(C,T)$ for any $x\in\RR^d$, $C\in \cC'$ and $T\in\TT$. Such a function is called (generalized) center function. The cell intensity of a stationary random tessellation $\cT$ is defined as
\[
    \gamma_d(\cT):=\EE \sum_{t\in \cT} \ind(z(t,\cT)\in [0,1]^d).
\]
For $\gamma_d(\cT)\in(0,\infty)$ the typical cell $Z(\cT)$ of $\cT$ is defined as the random polytope with distribution
\[
    \PP_{\cT}^0(\cdot):=\frac{1}{\gamma_d(\cT)} \EE \sum_{t\in\cT} \ind(t-z(t,\cT)\in \cdot)\ind(z(t,\cT)\in [0,1]^d).
\]
For more details we refer the reader to \cite[Chapter 10]{SW}. It is known that the cell intensity $\gamma_d(\cT)$ does not depend on the choice of the center function $z$, while the dependence of the distribution of the typical cell $\PP_{\cT}^0$ on $z$ is only up to shift operation $C\mapsto C-z(C)$ (see \cite[Theorem 4.2.1]{SW}).

The following question now arises naturally. Let $(\cT_n)_{n\in\NN}$ be the sequence of stationary random tessellations, which converge weakly/in probability/locally with high probability to a  stationary random tessellation $\cT$ as $n\to\infty$. Does the distribution of the typical cell $\PP_{\cT_n}^0$ converges weakly to $\PP_{\cT}^0$ as $n\to\infty$? Conversely, does the weak convergence of the typical cells imply weak convergence of the corresponding tessellations? When the sequence $(\cT_n)_{n\in\NN}$ converges to $\cT$ locally with high probability the following result was proven in \cite{sectional}.

\begin{proposition}[Proposition 6.4 in \cite{sectional}]\label{prop:TypicalCellConvergence}
    Let $(\cT_n)_{n\in \NN}$ be a sequence of stationary random tessellations in $\RR^d$, which converges locally with high probability to a stationary random tessellation $\cT$ as $n\to\infty$. Also, suppose that $\lim\limits_{n\to\infty} \gamma_d(\cT_n) = \gamma_d(\cT)$, and that all these intensities are finite. Then
    \[
		\PP^{0}_{\cT_n} {\overset{}{\underset{n\to\infty}\longrightarrow}} \PP^{0}_{\cT},
		\qquad
		\text{ weakly on } \cC'.
    \]
\end{proposition}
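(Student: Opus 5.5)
The plan is to prove convergence of the normalized sums defining $\PP^0_{\cT_n}$ through a truncation argument combined with a Fatou-type lower bound. The convergence of the intensities $\gamma_d(\cT_n)\to\gamma_d(\cT)$ then upgrades the lower bound to an equality, which removes any need for uniform integrability. Since the distribution of the typical cell depends on $z$ only up to a shift, I fix a center function that depends on the cell alone, $z(t,T)=z(t)$, for instance the center of the smallest ball containing $t$. Such a $z$ is continuous on $\cK'$ with the Hausdorff metric and satisfies $z(t)\in t$. By the portmanteau theorem it suffices to show
\[
\EE\sum_{t\in\cT_n} g(t-z(t))\ind(z(t)\in[0,1]^d)\ton \EE\sum_{t\in\cT} g(t-z(t))\ind(z(t)\in[0,1]^d)
\]
for every bounded continuous $g\colon\cC'\to\RR$. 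The indicator $\ind(z(t)\in[0,1]^d)$ is not continuous in $t$, but this causes no trouble because the argument below compares configurations that coincide exactly on an event of high probability.

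\textbf{Step 1: cells are determined locally by the skeleton.} For a tessellation $T$ in $\RR^d$, each cell $t\in T$ with $t\subset\inter B_R$ equals the closure of a bounded connected component of $\RR^d\setminus\skel(T)$. Such a component, together with its closure, lies in $\inter B_R$. Consequently, on the event $\{\skel(\cT_n)\cap B_{R+1}=\skel(\cT)\cap B_{R+1}\}$, the cells of $\cT_n$ contained in $B_R$ coincide with the cells of $\cT$ contained in $B_R$. For $g\ge 0$ bounded and continuous I set
\[
S_n^R(g):=\sum_{t\in\cT_n,\,t\subset B_R} g(t-z(t))\ind(z(t)\in[0,1]^d),
\]
and define $S^R(g)$ analogously for $\cT$. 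The hypothesis of local convergence with high probability then gives $\PP(S_n^R(g)\neq S^R(g))\to0$ for each fixed $R$, so $S_n^R(g)\to S^R(g)$ in probability.

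\textbf{Step 2: Fatou lower bound.} Fatou's lemma along almost surely convergent subsequences, which exist for convergence in probability, together with the nonnegativity of these variables yields
\[
\liminf_{n\to\infty}\EE\sum_{t\in\cT_n}g(t-z(t))\ind(z(t)\in[0,1]^d)\;\ge\;\liminf_{n\to\infty}\EE S_n^R(g)\;\ge\;\EE S^R(g).
\]
Since $\gamma_d(\cT)<\infty$ and every cell is compact, monotone convergence as $R\to\infty$ shows that $\EE S^R(g)$ increases to $\gamma_d(\cT)\,\PP^0_{\cT}(g)$. Hence
\[
\liminf_{n\to\infty}\gamma_d(\cT_n)\PP^0_{\cT_n}(g)\ge\gamma_d(\cT)\PP^0_{\cT}(g).
\]

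\textbf{Step 3: upper bound via the intensities.} Applying Step 2 to $\|g\|_\infty-g\ge0$ gives
\[
\limsup_{n\to\infty}\gamma_d(\cT_n)\PP^0_{\cT_n}(g)=\lim_{n\to\infty}\gamma_d(\cT_n)\|g\|_\infty-\liminf_{n\to\infty}\gamma_d(\cT_n)\PP^0_{\cT_n}(\|g\|_\infty-g)\le\gamma_d(\cT)\PP^0_{\cT}(g).
\]
This is exactly where the hypothesis $\gamma_d(\cT_n)\to\gamma_d(\cT)$ enters. Combining it with Step 2 and dividing by $\gamma_d(\cT_n)\to\gamma_d(\cT)>0$ yields $\PP^0_{\cT_n}(g)\to\PP^0_{\cT}(g)$ for every bounded continuous $g\ge0$. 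General bounded continuous $g$ then follow by adding a constant.

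The main obstacle is the lack of uniform integrability. Local agreement with high probability does not control how many tiny cells $\cT_n$ may contain on the exceptional event, which is why Step 2 only gives a one-sided Fatou bound. Intensity convergence is precisely the ingredient that closes this gap in Step 3. The only other points requiring care are the topological claim in Step 1, that a cell inside $B_R$ is recovered from $\skel\cap B_{R+1}$, and the choice of a center function depending only on the cell.
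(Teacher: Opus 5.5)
The paper does not prove this proposition; it quotes it from \cite{sectional}. So there is no in-paper argument to compare against, and your proposal stands as a correct, self-contained proof.

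The key steps all check out. The cells of a tessellation are exactly the closures of the connected components of $\RR^d\setminus\skel(T)$. Hence, on $\{\skel(\cT_n)\cap B_R=\skel(\cT)\cap B_R\}$, a cell $c\subset B_R$ of $\cT$ has $\inter c$ disjoint from $\skel(\cT_n)$, so $\inter c\subset\inter c'$ for some $c'\in\cT_n$. Then $\bd c\subset\skel(\cT_n)$ together with connectedness of $\inter c'$ forces $c'=c$. Agreement on $B_R$ therefore already suffices, and the margin $B_{R+1}$ is not needed. The Fatou lower bound for nonnegative test functions is sound. The complement trick with $\|g\|_\infty-g$ is precisely where $\gamma_d(\cT_n)\to\gamma_d(\cT)$ substitutes for the missing uniform integrability, which Remark~\ref{rem:CellIntNecessary} shows is unavoidable.

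Two remarks on scope. First, continuity of $g$ is never used, so you in fact get $\PP^0_{\cT_n}(g)\to\PP^0_{\cT}(g)$ for every bounded measurable $g$. Comparing both unnormalized measures with $A\mapsto\EE\big[\sum_{t\in\cT,\,t\subset B_R}\ind(t-z(t)\in A)\ind(z(t)\in[0,1]^d)\ind(D_n)\big]$, where $D_n$ is the agreement event and this measure is dominated by both, even yields convergence in total variation.

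Second, continuity of $z$ is not used either, and this matters more than it looks. Your sentence that $z$ matters only up to a shift would not by itself transfer weak convergence from the circumcenter to a discontinuous center function. The lexicographically smallest vertex, used in the paper's remarks, is such a case, because then $C\mapsto C-z(C)$ is discontinuous. This is harmless, since your argument runs verbatim for any center function determined by the cell alone. You should state it that way rather than rely on the particular choice of the circumcenter.
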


\begin{remark}
    We conjecture that the above proposition holds true under weaker assumptions on convergence of random tessellations, namely that convergence locally with high probability can be replaced by convergence in probability or even by weak convergence. We do not explore this direction further in this article and leave it for future research.
\end{remark}

We note that an extra condition regarding the convergence of cell intensities is indeed necessary (see Remark \ref{rem:CellIntNecessary}). At the same time it might be hard to verify, since often we do not have access to the exact formulas of $\gamma_d(\cT)$. Even for $\cT=\cL(\eta_f)$ or $\cT=\cL^*(\eta_f)$ such formulas are available only for very special choices of $f$ (see \cite[Section 6.3]{GKT20}, \cite[Section 5.2]{GKT21} and \cite[Theorem 5.1]{gusakova2024PLT}). 

Using the duality relation, instead of the cell intensity of $\cT$ we may consider the vertex intensity for the dual tessellation $\cT^*$, which is often easier to analyze. More precisely, given a stationary random tessellation $\cT$ denote by $\cF_0(\cT)$ the set of all vertices of all its cells and by $\gamma_0(\cT):=\EE[\#(\cF_0(\cT)\cap [0,1]^d)]$ the vertex intensity of $\cT$. It holds that $\gamma_d(\cT)=\gamma_0(\cT^*)$. The following lemma provides a simple sufficient condition for the convergence of vertex intensities.

\begin{lemma}\label{lem:convCellIntensities}
    Let $(\cT_n)_{n\in \NN}$ be a sequence of stationary random tessellations on $\RR^d$, which converges locally with high probability to a stationary random tessellation $\cT$ as $n\to\infty$. Assume that $\gamma_0(\cT)$ and $\gamma_0(\cT_n)$ are finite for any $n\in\NN$. Moreover, assume that for some $p>1$ we have
    \[
        \sup_{n\in\NN}\EE\Big[\#(\cF_0(\cT_n)\cap[0,1]^d)^p\Big]<\infty.
    \]
    Then $\lim_{n\to\infty}\gamma_0(\cT_n)=\gamma_0(\cT)$.
\end{lemma}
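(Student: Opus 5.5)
The plan is to show that the random variables $X_n:=\#(\cF_0(\cT_n)\cap[0,1]^d)$ converge in probability to $X:=\#(\cF_0(\cT)\cap[0,1]^d)$, and then to upgrade this to convergence of expectations using the uniform $L^p$ bound, which gives uniform integrability. Since $\gamma_0(\cT_n)=\EE X_n$ and $\gamma_0(\cT)=\EE X$, this yields the claim.

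\textbf{Step 1: identifying vertices from the skeleton.} The key observation is that the vertex set of a tessellation is determined locally by its skeleton. For a tessellation $T$, a point $x$ is a vertex if and only if there is no neighbourhood $U$ of $x$ on which $\skel(T)\cap U$ coincides with $U\cap F$ for some affine subspace, or union of half-flats of dimension at least one, through $x$ of the kind that arises at relative interior points of faces. More concretely, $x\in\cF_0(T)$ if and only if, for every small $r>0$, the set $\skel(T)\cap B(x,r)$ is not invariant under translations by any nonzero vector $u$ in a neighbourhood of $x$. At a point in the relative interior of a $k$-face with $k\ge1$, the local skeleton is a cone invariant under translations along that face. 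At a vertex, the local picture is a pointed cone, because the cells containing $x$ are pointed polyhedral cones near $x$ with apex $x$.

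Hence, if $\skel(\cT_n)\cap B_R=\skel(\cT)\cap B_R$ with $R$ large, say $R=\sqrt d+1$, then $\cF_0(\cT_n)\cap[0,1]^d=\cF_0(\cT)\cap[0,1]^d$. This step relies on local finiteness, which ensures that a small ball around a point meets only the cells containing it, together with the fact that the faces of a tessellation are well defined because cells meet in common faces. I expect this to be the main obstacle, since it requires stating cleanly that the vertex set is a local functional of the skeleton. I would argue it via the local cone structure: near $x$, $\skel(T)$ equals $x+\big(\bigcup_{c\ni x}\bd(\text{tangent cone of } c)\big)$, and $x$ is a vertex exactly when the lineality space of this fan is $\{0\}$.

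\textbf{Step 2: convergence in probability.} By Step 1 and Definition \ref{def:StrongConvergence},
\[
\PP(X_n\neq X)\le \PP\big(\skel(\cT_n)\cap B_R\neq\skel(\cT)\cap B_R\big)\to0 .
\]
Here we implicitly use that $\cT_n$ and $\cT$ live on a common probability space, as the definition of convergence locally with high probability requires.

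\textbf{Step 3: convergence of means.} Write
\[
|\EE X_n-\EE X|\le \EE[X_n\ind(X_n\ne X)]+\EE[X\ind(X_n\ne X)].
\]
The second term tends to $0$ by dominated convergence, since $X$ is integrable because $\gamma_0(\cT)<\infty$, and $\PP(X_n\neq X)\to0$. For the first term, Hölder's inequality with $q=p/(p-1)$ gives
\[
\EE[X_n\ind(X_n\ne X)]\le \Big(\sup_{m}\EE X_m^p\Big)^{1/p}\PP(X_n\neq X)^{1/q}\to0 .
\]
Therefore $\gamma_0(\cT_n)\to\gamma_0(\cT)$.
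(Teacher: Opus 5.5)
Your proof is correct and follows essentially the paper's route: agreement of the skeletons on a ball around $[0,1]^d$ forces $X_n=X$, so $\PP(X_n\neq X)\to 0$, and H\"older's inequality with the uniform $p$-th moment bound then gives $\EE X_n\to\EE X$. The differences are minor. You control $\EE[X\ind(X_n\neq X)]$ directly from $\gamma_0(\cT)<\infty$, whereas the paper first gets $\EE[X^p]<\infty$ via Fatou's lemma along an almost surely convergent subsequence and then applies H\"older to $|X_n-X|$, obtaining the explicit bound $2C^{1/p}\PP(X_n\neq X)^{1-1/p}$. Your Step 1, which uses the ball $B_{\sqrt d+1}$ rather than $[0,1]^d$ itself, justifies that the vertex set is determined locally by the skeleton, a point the paper simply asserts.
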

\begin{proof} 
Let $p>1$ and set  $N_n:=\#(\cF_0(\cT_n)\cap[0,1]^d)$ and $N:=\#(\cF_0(\cT)\cap[0,1]^d)$. We start by noting that for $n\in \NN$ the event $\cD_n:=\{\cT_n\cap [0,1]^d= \cT\cap [0,1]^d\}$ implies that $N_n=N$ and by Definition \ref{def:StrongConvergence} we have $\lim_{n\to\infty}\PP(\cD_n^c)=0$. Then 
\[
    \PP(N\neq N_n)\leq \PP(\cD_n^c)\overset{n\to\infty}{\longrightarrow} 0,
\]
and, hence, $N_n\to N$ in probability as $n\to\infty$. Then there is a subsequence $(N_{n_k})_{k\in\NN}$, such that $N_{n_k}\to N$ almost surely as $k\to\infty$ and by Fatou's lemma 
\[
    \EE[N^p]\leq \liminf_{k\to\infty}\EE[N_{n_k}^p]\leq \sup_{n\in\NN}\EE[N_n^p]<\infty.
\]
Then setting $C:= \sup_{n\in\NN}\EE[N_n^p]<\infty$ and using H\"older's inequality we conclude
\[
    \begin{aligned}
        |\gamma_0(\cT_n)-\gamma_0(\cT)|&= \EE \Big[\big|\#(\cF_0(\cT_n)\cap [0,1]^d)- \#(\cF_0(\cT)\cap [0,1]^d)\big|\ind(\cD_n^c)\Big]\\
        &\le 2C^{1/p}\,\big(\PP(\cD_n^c)\big)^{1-1/p}\overset{n\to\infty}{\longrightarrow} 0,
    \end{aligned}
\]
and the proof follows.
\end{proof}

Now using Theorem \ref{thm:StrongProbConvLaguerre}, Proposition \ref{prop:TypicalCellConvergence} and Lemma \ref{lem:convCellIntensities} we can formulate sufficient conditions for a sequence of densities $(f_n)_{n\in\NN}$ satisfying \hyperref[item:C1]{(C1)} such that the typical cells of the corresponding Laguerre tessellations converge. The proof of this theorem will be given in Section \ref{sec:ProofStrongProbConvLaguerre}.

\begin{theorem}\label{thm:ConvergenceTypicalCell}
     Let $d\ge 2$. Let $\eta_{n}:=\eta_{f_n}$, $n\in\NN$, be a sequence of Poisson point processes with densities $(f_{n})_{n\in\NN}$ satisfying \hyperref[item:C1]{(C1)} and let $\eta:=\eta_{f}$. Additionally, we assume that either of the following two conditions hold:
     \begin{enumerate}
         \item [(i)] There exists $n_0\in \NN$, constants $C\in (0,\infty)$, $\alpha >0$, $\varepsilon>0$, and $x_0\in E$
         such that for all $n\ge n_0$ and all $x\ge x_0$ we have
         \[
            \big(\frI^1f_n\big)(x+4d+2)\le C \big(\frI^1 f_n\big)(x),\qquad \big(\frI^1f_n\big)(x)\ge \alpha x^\varepsilon;
         \]
        \item[(ii)] For some $p>1$ we have $\sup_{n\in \NN}a_n^p<\infty$, where $a_n=\int_\RR f_n(x)\dd x$. 
     \end{enumerate}
     
     Then we have
    \[
        \PP_{\cL(\eta_n)}^0{\overset{}{\underset{n\to\infty}\longrightarrow}} \PP_{\cL(\eta)}^0, \qquad
		\text{ weakly on } \cC'.
    \]
\end{theorem}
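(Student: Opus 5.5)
The plan is to apply Proposition \ref{prop:TypicalCellConvergence} with $\cT_n=\cL(\widetilde\eta_n)$ and $\cT=\cL(\widetilde\eta)$, using the coupling from Theorem \ref{thm:StrongProbConvLaguerre}. Part (i) of that theorem gives convergence locally with high probability, and the typical cell depends only on the distribution, so the coupling costs nothing. Thus it remains to show $\gamma_d(\cL(\eta_n))\to\gamma_d(\cL(\eta))<\infty$. By the duality $\gamma_d(\cT)=\gamma_0(\cT^*)$, this amounts to $\gamma_0(\cL^*(\eta_n))\to\gamma_0(\cL^*(\eta))$.

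To get the latter I would invoke Lemma \ref{lem:convCellIntensities} for the dual tessellations, whose local convergence with high probability is Theorem \ref{thm:StrongProbConvLaguerre}(ii). The vertices of $\cL^*(\eta_n)$ in $[0,1]^d$ are among the spatial coordinates $v$ of points $(v,h)\in\eta_n$ with $v\in[0,1]^d$ that have a nonempty cell. So it suffices to prove
\[
\sup_n\EE\big[N_n^p\big]<\infty,\qquad N_n:=\#\{(v,h)\in\eta_n\colon v\in[0,1]^d,\ \inter C((v,h),\eta_n)\neq\varnothing\},
\]
for some $p>1$. Finiteness of $\gamma_0$ for each $n$ and for the limit then follows from the same bound, or from \cite{gusakova2024PLT}.

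Under (ii) this is immediate. $N_n$ is dominated by $\eta_n([0,1]^d\times\RR)$, which is Poisson with mean $a_n$. Its $p$-th moment is a polynomial-type function of $a_n$ bounded by $c_p(a_n+a_n^p)$, so it is uniformly bounded.

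Under (i) the total mass may be infinite, and I would instead bound how high a point can be and still own a cell. Take $p=2$ and use the Mecke formula:
\[
\EE[N_n(N_n-1)]\le\int\!\!\int \PP\big(\text{both }x,y\text{ have cells in }\eta_n\cup\{x,y\}\big)\,\Lambda_{f_n}^2(\dd(x,y)),
\]
with $x,y$ ranging over $[0,1]^d\times\RR$. A point $(v,h)$ has a nonempty cell only if no point of $\eta_n$ covers it in the paraboloid sense. In particular, some $w$ near $[0,1]^d$, which we may localize using stabilization lemmas of Section \ref{sec:Technical}, has power at most $\pow(w,\cdot)$ of all other points. For fixed $w$, the probability that a point has power below $\|w-v\|^2+h$ is governed by $\exp(-c\,(\frI^{d/2+1}f_n)(h))$. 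That fractional integral is in turn bounded below by $c'(\frI^1 f_n)(h-\text{const})\ge c'\alpha h^{\varepsilon}$. The doubling-type condition $(\frI^1 f_n)(x+4d+2)\le C(\frI^1 f_n)(x)$ controls the intensity factor $f_n(h)\,\dd h$, more precisely the mass $(\frI^1 f_n)(h+1)-(\frI^1 f_n)(h)$ on unit height slabs. I expect the shift $4d+2$ to arise from comparing powers over a window of spatial diameter $\sqrt d$ plus margin. The result is an integrable bound of the form $\sum_k C^{k}\exp(-c\,\alpha k^{\varepsilon'})$ over slabs $h\in[x_0+k(4d+2),x_0+(k+1)(4d+2)]$. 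Heights below $x_0$ are handled by (C1)(iii) together with local $L^1$ convergence, which give uniformly finite mass of $f_n$ on $(-\infty,x_0]$.

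The main obstacle is this last estimate: the uniform in $n$ second moment bound under (i). One has to make the emptiness event for high points quantitative and uniform in $n$. A naive union bound over a continuum of $w$ fails, so I would discretize $[0,1]^d$ into finitely many cubes. On each cube, a lower envelope argument bounds the power of the empty region, which yields a deterministic shift in height; this shift is where the constant $4d+2$ is absorbed. The remaining steps are routine applications of the earlier results.
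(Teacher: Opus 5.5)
Your reduction is the paper's: couple via Theorem~\ref{thm:StrongProbConvLaguerre}, apply Proposition~\ref{prop:TypicalCellConvergence}, pass to $\gamma_0(\cL^*(\widetilde\eta_n))$, and use Lemma~\ref{lem:convCellIntensities} on the duals. Case (ii) is also handled exactly as in the paper. The gap is the uniform second-moment bound under (i). You leave it heuristic, and two of the concrete steps you announce would not work as stated.

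\textbf{The summation.} If you bound the mass of the $k$-th slab of width $4d+2$ by iterating the doubling condition, you get a factor $C^k$. But $\sum_k C^k\exp(-c\alpha k^{\varepsilon})=\infty$ whenever $C>1$ and $\varepsilon<1$, and (i) allows this. The missing idea is to use the doubling condition only once, so that the prefactor and the exponent involve the same number $y_k:=(\frI^1f_n)(k)$. Since $(\frI^{d/2+1}f_n)(x)\ge(\frI^1f_n)(x-1)/\Gamma(\frac d2+1)$, the terms are $\lesssim y_k^2e^{-cy_k}\le C'e^{-cy_k/2}$. Only then does $y_k\ge\alpha k^{\varepsilon}$ make the series converge. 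This is precisely the paper's mechanism:
\begin{itemize}
\item it splits over the layers $\cH^{\rm max}(\eta_n,\sqrt d,k+1)\setminus\cH^{\rm max}(\eta_n,\sqrt d,k)$;
\item on each layer it bounds $V_n$ by $\widetilde\eta_n(B_{\sqrt d}\times(-\infty,k+1])$;
\item it applies Cauchy--Schwarz with a fourth Poisson moment $\lesssim((\frI^1f_n)(k+1))^4$;
\item it uses $\PP(\cH^{\rm max}(\eta_n,\sqrt d,k)^c)\le\exp(-c(\frI^1f_n)(k-4d-1))$ from Lemma~\ref{lm:BoundaryBounds}.
\end{itemize}
The offset $(k+1)-(k-4d-1)=4d+2$ is exactly the shift appearing in (i).

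\textbf{The localization.} The Laguerre cell of a point $(v,h)$ with $v\in[0,1]^d$ need not meet any fixed neighbourhood of $[0,1]^d$, because a heavy point can own a cell far away. So a union bound over a discretization of $[0,1]^d$ does not bound the probability that $(v,h)$ is extreme. Lemma~\ref{lm:StabRadius} is not the right tool either. Its bound contains the lower-tail terms $(\frI^{d/2+1}f_n)(-r^2/8)$ and $(\frI^1f_n)(-r^2/8)$, which under (C1)(iii) decay only polynomially in $r$ and need not beat the growing Poisson moments.

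What works is a union bound over dyadic shells $\{2^j\le\|w-v\|<2^{j+1}\}$. Cover each shell by $O_d(1)$ balls of radius $2^{j-2}$ and apply Lemma~\ref{lm:BoundaryBounds}~1(a) after translation, by stationarity, with $T=4^j+h$, so that $T-4a^2\ge h$. Together with the ball $\|w-v\|\le 1$ this gives
\[
\PP\big((v,h)\text{ extreme in }\eta_n\cup\{(v,h)\}\big)\le C\exp\big(-c(\frI^{d/2+1}f_n)(h-4)\big)
\]
for large $h$, uniformly in large $n$. The $j$-sum converges because $(\frI^1f_n)(h)$ is bounded below there.

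Then decouple your Mecke integrand via $\PP(A\cap B)\le\sqrt{\PP(A)\PP(B)}$ and the monotonicity of extremality under adding points. Combined with the single use of doubling above, your second-factorial-moment route closes and is a legitimate alternative to the paper's layering. (The paper's assertion that on $\cH^{\rm max}(\eta_n,\sqrt d,k)$ every vertex of $\cL^*(\widetilde\eta_n)$ in $[0,1]^d$ comes from a point of height at most $k$ tacitly uses the same localization. It is immediate only for points whose cell meets $B_{\sqrt d}$, so the shell argument is worth having in either route.)
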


Next we will show that the convergence of cell intensities is a necessary condition in Proposition \ref{prop:TypicalCellConvergence} and that the weak convergence of the typical cell does not imply weak convergence of the skeletons.

\begin{remark}[Convergence of cell intensities is necessary]\label{rem:CellIntNecessary}
    Consider the following example. Let $U_n\sim\unif([0,2^{-n})^d)$, $n\in\NN_0$, be a sequence of independent random vectors and let $\cT^{(n)}:=\{U_n+2^{-n}z+[0,2^{-n}]^d\colon z\in \ZZ^d\}$ be a stationary random tessellation, i.e. a uniformly shifted lattice at scale $2^{-n}$. We define the sequence of stationary random tessellations $(\cT_n)_{n\in\NN}$ as follows:
    \[
    \cT_n:=\begin{cases}
        \cT^{(n)}\text{ with probability }p_n=2^{-nd/2},\\
        \cT^{(0)}\text{ with probability }1-p_n,
    \end{cases}
    \]
    and set $\cT=\cT^{(0)}$ almost surely. Then, for any $R>0$, we have
    \begin{align*}
        \PP(\skel(\cT_n)\cap B_R\neq \skel(\cT)\cap B_R)\leq \PP(\cT_n=\cT^{(n)})=p_n\to 0,
    \end{align*}
    as $n\to\infty$, implying that $(\cT_n)_{n\in\NN}$ converges to $\cT$ locally with high probability as $n\to\infty$. At the same time $\gamma_d(\cT)=1$, while 
    \[
        \gamma_d(\cT_n)=(1-p_n)+2^{nd}p_n=1-2^{-nd/2}+2^{nd/2}\to\infty,
    \]
    as $n\to\infty$. We will now show that $\PP^0_{\cT_n}$ does not converge to $\PP^0_{\cT}$ weakly. Indeed let us for simplicity choose the center function $z$ to be the lexicographically smallest vertex. Then $Z(\cT)=[0,1]^d$ almost surely, while for $n\in\NN$ we have
    \[
        \PP^0_{\cT_n}(\cdot)=\ind\{[0,2^{-n}]^d\in\cdot\}{2^{nd/2}\over 1-2^{-nd/2}+2^{nd/2}}+\ind\{[0,1]^d\in\cdot\}{1-2^{-nd/2}\over 1-2^{-nd/2}+2^{nd/2}},
    \]
    implying that $Z(\cT_n)$ converges to a singleton $\{o\}$ in distribution as $n\to\infty$.
    
    By Proposition \ref{prop:ImplicationWeakConv} this also means that weak convergence and convergence in probability for tessellations do not imply weak convergence of the corresponding typical cells.
\end{remark}

\begin{remark}[Weak convergence of the typical cells does not imply weak convergence of the corresponding tessellations] It is sufficient to show that there are two distinct stationary random tessellations whose typical cells have the same distribution. We consider the intervals 
\begin{align*}
    I_1^{(1)}&:=[0,1/6],\, I_2^{(1)}:=[1/6,1/3],\, I_3^{(1)}:=[1/3,2/3],\, I_4^{(1)}:=[2/3,1], \\
    I_1^{(2)}&:=[0,1/3],\, I_2^{(2)}:=[1/3,1/2],\, I_3^{(2)}:=[1/2,5/6],\, I_4^{(2)}:=[5/6,1], 
\end{align*}
and define two distinct tilings of $[0,1]^2$ by
\begin{align*}
    C_{j,k}^{(i)}:=I_j^{(i)} \times I_k^{(i)},\quad i=1,2,\,j,k=1,2,3,4.
\end{align*}
Further let $U\sim\unif([0,1)^2)$ and define the stationary random tessellations in $\RR^2$
\begin{align*}
    \cT_i&:=\big\{C_{j,k}^{(i)}+z+U\colon j,k=1,2,3,4,\,z\in\ZZ^2\big\},\qquad i=1,2.
\end{align*} 
Let us choose the center function $z$ to be the lexicographically smallest vertex. Note that $\gamma_2(\cT_1)=\gamma_2(\cT_2)=16$ and 
\[
B:=\{C^{(1)}_{j,k}-z(C^{(1)}_{j,k})\colon j,k=1,2,3,4\}=\{C^{(2)}_{j,k}-z(C^{(2)}_{j,k})\colon j,k=1,2,3,4\}.
\]
Then 
\begin{align*}
    \PP_{\cT_1}^0(\cdot)&=\frac{1}{16} \EE \sum_{C\in B} \ind\big(C\in \cdot\big)=\PP_{\cT_2}^0(\cdot),
\end{align*}
but $\skel(\cT_1)\neq \skel(\cT_2)$ almost surely.
    
\end{remark}

\subsection{Applications}\label{sec:Applications}

We will finish this section by considering a few applications of our results in some special settings considered in the literature before.
 
\paragraph{Laguerre tessellation of independently marked Poisson point process. } Let $q$ be a density of some probability measure $\QQ$ on $\RR_+$ and consider the family of functions $f_n(h):=\gamma n\,q(nh)\in  L^{1,+}_{\rm loc}(\RR_+)$, $n\in\NN$. Note, that by the marking theorem for Poisson point processes \cite[Theorem 5.6]{LP} the point process $\eta_n:=\eta_{f_n}$ has the same distribution as an independent $\QQ_n$-marking of a homogeneous Poisson point process $\eta^{\gamma}$, where
\[
\QQ_n(\cdot)=n\int_{\RR^+}q(nh)\ind(h\in\cdot)\dint h,
\]
is a probability measure on $\RR_+$. 

\begin{corollary}\label{cor:WeightsTo}
    Let $f_n(h)=\gamma n\,q(nh)$, $n \in \NN$, for some $q\in L^{1,+}_{\rm loc}(\RR_+)$ satisfying $\int_{\RR_+}q(h)\dint h=1$. The random closed sets $\sk{L}(\eta_n)$ and $\sk{L}^*(\eta_n)$ converge weakly to $\sk{L}(\eta^{\gamma})$ and $\sk{L}^*(\eta^{\gamma})$, respectively, as $n\to\infty$.
\end{corollary}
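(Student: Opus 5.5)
The plan is to derive the corollary directly from Theorem \ref{thm:weakConv}. The only hypothesis to check is that the sequence $f_n(h)=\gamma n\,q(nh)$ satisfies Condition \hyperref[item:C2]{(C2)} with the same constant $\gamma>0$.

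First, $f_n\in L^{1,+}_{\rm loc}(\RR_+)$ for every $n$. Non-negativity is immediate from $q\ge 0$. For local integrability, the substitution $u=nh$ gives
\[
\int_0^x f_n(h)\,\dd h=\gamma\int_0^{nx} q(u)\,\dd u\le \gamma<\infty .
\]
Since $f_n$ vanishes on $(-\infty,0)$ (it is defined only on $E=\RR_+$), the fractional integral of order one is
\[
(\frI^1 f_n)(x)=\int_{-\infty}^x f_n(t)\,\dd t=\gamma\int_0^{nx}q(u)\,\dd u,\qquad x>0 .
\]

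Next, fix $x>0$. Then $nx\to\infty$ as $n\to\infty$. By monotone convergence and the normalization $\int_{\RR_+}q=1$, we get $(\frI^1 f_n)(x)\to\gamma$. This is exactly \hyperref[item:C2]{(C2)}, and the limit $\gamma$ lies in $(0,\infty)$ because it is the intensity of $\eta^\gamma$.

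Theorem \ref{thm:weakConv} now applies and gives the weak convergence $\sk{L}(\eta_n)\to\sk{L}(\eta^\gamma)$ and $\sk{L}^*(\eta_n)\to\sk{L}^*(\eta^\gamma)$. No step is a real obstacle. The only point needing care is the convention that $f_n$ is extended by zero below $0$, so that $\frI^1 f_n$ reduces to the integral over $[0,x]$.
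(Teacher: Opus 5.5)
Your proposal is correct and follows essentially the same route as the paper. The paper also computes $(\frI^1 f_n)(x)=\gamma\int_0^{nx}q(h)\,\dd h\to\gamma$ by monotone convergence to verify Condition (C2), and then invokes Theorem~\ref{thm:weakConv}.
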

\begin{proof}
    For any $x>0$ 
    by the monotone convergence theorem we have 
    \[
    (\frI^1 f_n)(x)=\gamma n\int_{0}^xq(nh)\dint h=\gamma\int_{0}^{nx}q(h)\dint h \xrightarrow[n\to \infty]{}\gamma \int_{0}^{\infty}q(h)\dint h=\gamma.
    \]
    Hence, $f_n$ fulfills \hyperref[item:C2]{(C2)}  and the proof follows by Theorem \ref{thm:weakConv}.
\end{proof}

Geometrically, we can interpret this result as follows. As argued above $\eta_n$ can be identified with an independent $\QQ_n$-marking of $\eta_{\gamma}$. As $n\to\infty$ the mark (weight) of every point will approach $0$ which means that the corresponding Laguerre (weighted) cell will approach the Voronoi (unweighed) cell. This example has been considered in \cite{Ldoc}, where it was shown that the typical cell of $\cL(\eta_n)$ converges in distribution to the typical cell of $\cL(\eta^{\gamma})$. Corollary \ref{cor:WeightsTo} provides a counterpart of this result for convergence on the level of skeletons.

\paragraph{$\beta$-Voronoi and $\beta$-Delaunay tessellations.} Another application is related to the $\beta$-type tessellations (introduced in \cite{GKT20} and further studied in a series of papers \cite{GKT21b,sectional,GKT21,GKT21a}). For $\beta > -1$ consider a Poisson point process $\eta_{\beta}$ in $\RR^{d}\times \RR_{+}$ having intensity measure of the form \eqref{eq:IntensityMeasure} with density
\begin{equation*}
    f_\beta(h)=c_{d+1,\beta}h^{\beta},\qquad h\ge 0, \qquad c_{d+1,\beta}:=\frac{\Gamma\left(\frac{d}{2}+\beta+2\right)}{\pi^{\frac{d}{2}+1}\Gamma(\beta+1)}.
\end{equation*} 
Note, that in \cite{GKT20} a slightly different normalization, namely $\pi^{-\frac{d+1}{2}}\Gamma\left(\frac{d+1}{2}+\beta+1\right)/\Gamma(\beta+1)$ has been considered. The normal random tessellation $\cL(\eta_{\beta})$ is called \textit{$\beta$-Voronoi tessellation} and the dual tessellation $\cL^*(\eta_{\beta})$ is called the \textit{$\beta$-Delaunay tessellation}. The names $\beta$-Voronoi and $\beta$-Delaunay allude the connection of $\cL(\eta_\beta)$ and its dual $\cL^*(\eta_\beta)$ to the classical Poisson-Voronoi and Poisson-Delaunay tessellations, respectively. This was mentioned without a proof in \cite{GKT20}, where it says that the classical Poisson–Voronoi and Poisson–Delaunay tessellations arise as the limiting cases of $\beta$-Voronoi and $\beta$-Delaunay tessellations, respectively, when $\beta\to-1$. At the same time in \cite[Remark 6]{GKT20} it was shown that the typical cell of $\cL^*(\eta_{\beta})$ converges in distribution to the typical cell of $\cL(\eta^{\gamma_d})$ with $\gamma_d=\pi^{-\frac{d}{2}+1}\Gamma\left(\frac{d}{2}+1\right)$ as $\beta\to-1$. Further, in \cite[Proposition 3.1]{sectional} it was proven that the underlying point process $\eta_{\beta}$ converges weakly on the space of locally finite integer-valued measures on $\RR^d\times \RR_+$ to $\eta^{\gamma_d}$. Moreover, in \cite[Theorem 4.1]{sectional} it was shown that the intersection of $\cL(\eta^{\gamma_d})$ with an $\ell$-dimensional affine subspace  up to isometry a random tessellation in $\RR^{\ell}$, which has the same distribution as an $\ell$-dimensional $\beta$-Voronoi tessellation with $\beta=-1+(d-\ell)/2$. Each of these facts serve as an evidence for the Poisson-Voronoi and Poisson-Delaunay tessellations being the limit of $\beta$-Voronoi and $\beta$-Delaunay tessellations, respectively, when $\beta\to-1$. Using Theorem \ref{thm:weakConv} we can now give a proof of this convergence in a sense of convergence of the corresponding skeletons.

\begin{corollary}\label{cor:BetaTo-1}
    The random closed sets $\sk{L}(\eta_\beta)$ and $\sk{L}^*(\eta_\beta)$ converge weakly as $\beta\to -1$ to $\sk{L}(\eta^{\gamma_d})$ and $\sk{L}^*(\eta^{\gamma_d})$, respectively, where $\gamma_d=\pi^{-\frac{d}{2}-1}\Gamma\left(\frac{d}{2}+1\right)$.
\end{corollary}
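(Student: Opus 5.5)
The plan is to verify condition \hyperref[item:C2]{(C2)} for the family $f_\beta$ along an arbitrary sequence $\beta_n\downarrow -1$, and then apply Theorem \ref{thm:weakConv}. Weak convergence as $\beta\to-1$ means that, for every such sequence, the skeletons converge weakly, via the capacity functional criterion \eqref{eq:CapacityConvergence}. So it suffices to fix $\beta_n\to-1$ with $\beta_n>-1$ and set $f_n:=f_{\beta_n}\in L^{1,+}_{\rm loc}(\RR_+)$; these functions are locally integrable because $\beta_n>-1$.

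The key computation is the fractional integral of order one. For $x>0$,
\[
(\frI^1 f_n)(x)=c_{d+1,\beta_n}\int_0^x h^{\beta_n}\dd h=\frac{\Gamma\left(\frac d2+\beta_n+2\right)}{\pi^{\frac d2+1}\Gamma(\beta_n+1)}\cdot\frac{x^{\beta_n+1}}{\beta_n+1}=\frac{\Gamma\left(\frac d2+\beta_n+2\right)}{\pi^{\frac d2+1}\Gamma(\beta_n+2)}\,x^{\beta_n+1},
\]
where the last step uses $\Gamma(\beta+1)(\beta+1)=\Gamma(\beta+2)$. Now let $n\to\infty$. Since $\beta_n+1\to0$, we have $x^{\beta_n+1}\to1$ for each fixed $x>0$. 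By continuity of $\Gamma$ on $(0,\infty)$, the numerator tends to $\Gamma(\frac d2+1)$ and $\Gamma(\beta_n+2)\to\Gamma(1)=1$. Hence
\[
\lim_{n\to\infty}(\frI^1f_n)(x)=\pi^{-\frac d2-1}\Gamma\left(\tfrac d2+1\right)=\gamma_d\in(0,\infty)\qquad\text{for all }x>0,
\]
which is exactly \hyperref[item:C2]{(C2)} with $\gamma=\gamma_d$.

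Theorem \ref{thm:weakConv} then gives weak convergence of $\sk{L}(\eta_{\beta_n})$ and $\sk{L}^*(\eta_{\beta_n})$ to $\sk{L}(\eta^{\gamma_d})$ and $\sk{L}^*(\eta^{\gamma_d})$. The limit does not depend on the chosen sequence, so the convergence holds as $\beta\to-1$.

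There is no genuine obstacle here. The only point needing care is the cancellation of the singular factor $1/(\beta+1)$ against $1/\Gamma(\beta+1)$, both of which degenerate as $\beta\to-1$. One should also note that the constant obtained is $\pi^{-\frac d2-1}\Gamma(\frac d2+1)$, with the normalization used in this paper, and not the $\gamma_d$ quoted from \cite{GKT20}.
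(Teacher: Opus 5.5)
Your proposal is correct and matches the paper's proof essentially verbatim. Like the paper, you fix a sequence $\beta_n\to-1$, compute $(\frI^1 f_n)(x)=\frac{\Gamma(\frac d2+\beta_n+2)}{\pi^{\frac d2+1}\Gamma(\beta_n+2)}\,x^{\beta_n+1}\to\gamma_d$ to verify condition (C2), and then invoke Theorem~\ref{thm:weakConv}.
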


\begin{remark}
This corollary shows, in particular, that the Poisson-Voronoi and Poisson-Delaunay tessellations can be considered as members of the $\beta$-family: They can be viewed as $\beta$-Voronoi and $\beta$-Delaunay tessellations, respectively, corresponding to the parameter value $\beta=-1$.
\end{remark}

\begin{proof}
    Consider some sequence $(\beta_n)_{n\in\NN}\subset (-1,\infty)$ such that $\beta_n \to -1$ as $n \to \infty$. By Theorem \ref{thm:weakConv} it is sufficient to show that 
    \[
        f_n(h):=\frac{\Gamma\left(\frac{d}{2}+\beta_n+2\right)}{\pi^{\frac{d}{2}+1}\Gamma(\beta_n+1)}h^{\beta_n}, \qquad h\ge 0
    \]
    fulfills condition  \hyperref[item:C2]{(C2)}. For $x>0$ we have 
    \[
        (\frI^1 f_n)(x)=\frac{\Gamma\left(\frac{d}{2}+\beta_n+2\right)}{\pi^{\frac{d}{2}+1}\Gamma(\beta_n+1)}\int_0^x h^{\beta_n} \dd h = \frac{\Gamma\left(\frac{d}{2}+\beta_n+2\right)}{\pi^{\frac{d}{2}+1}\Gamma(\beta_n+2)} x^{\beta_n+1} \underset{n\to \infty}{\longrightarrow} \frac{\Gamma\left(\frac{d}{2}+1\right)}{\pi^{\frac{d}{2}+1}}=\gamma_d \in (0,\infty).
    \]
\end{proof}

\paragraph{Gaussian-Voronoi and Gaussian-Delaunay tessellations.} Let $\widetilde\eta$ be a Poisson point process in $\RR^d\times \RR$ with intensity measure of the form \eqref{eq:IntensityMeasure} with density $\widetilde f(h)=(2\pi)^{-{d\over 2}-1}e^{h/2}$. The tessellation $\cL(\widetilde \eta)$ is called \textit{Gaussian-Voronoi tessellation} and its dual $\cL^*(\widetilde \eta)$ is called \textit{Gaussian-Delaunay tessellation}. These tessellations have been introduced in \cite{GKT21} (with normalization $(2\pi)^{-{d\over 2}}$), where it was also shown that $\sk{L}^*(\widetilde\eta)$ is a weak limit of the skeletons of $\beta$-Delaunay tessellation after scaling by $\sqrt{2\beta}$ and as $\beta\to\infty$. Using our general Theorem \ref{thm:weakConvPPPonSameSpaceDual} we can recover this result and prove that the same holds for the corresponding Voronoi-type tessellations. Furthermore, for the $\beta$-Voronoi tessellation we can also show weak convergence of the typical cells using Theorem \ref{thm:ConvergenceTypicalCell}. We note that the weak convergence of typical cell of $\beta$-Delaunay tessellation (rescaled by $\sqrt{2\beta}$) to the typical cell of Gaussian-Delaunay tessellation as $\beta\to\infty$ is a consequence of the exact stochastic representation of the corresponding typical cells obtained in \cite[Theorem 1]{GKT20} and \cite[Theorem 5.1]{GKT21}.

\begin{corollary}\label{cor:BetaToGaussian}
    The random closed sets $\sqrt{2\beta}\sk{L}(\eta_{\beta})$ and $\sqrt{2\beta}\sk{L}^*(\eta_{\beta})$ converge weakly to $\sk{L}(\widetilde\eta)$ and $\sk{L}^*(\widetilde\eta)$, respectively, as $\beta\to\infty$. Moreover, the typical cell of $\sqrt{2\beta}\cL(\eta_{\beta})$ converges in distribution to the typical cell of $\cL(\widetilde{\eta})$ as $\beta \to \infty$. 
\end{corollary}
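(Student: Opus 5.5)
Write $\beta=\beta_n\to\infty$ along an arbitrary sequence. The plan is to use the linear transformation identity \eqref{eq:LinearTransformation} to turn the rescaled $\beta$-Voronoi tessellation into a Poisson-Laguerre tessellation with a new density $g_n$. We then check that $(g_n)$ satisfies \hyperref[item:C1]{(C1)} with limit $\widetilde f$ (up to a harmless shift). After that, Theorem \ref{thm:weakConvPPPonSameSpaceDual} gives the two skeleton statements, and Theorem \ref{thm:ConvergenceTypicalCell}(i) gives the typical cell statement.

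\textbf{Rescaling.} Apply \eqref{eq:LinearTransformation} with $\varphi(x)=\lambda x+c$. Take $\lambda=\lambda_n=1/(2\beta)$, so that $\sqrt{\lambda}^{-1}=\sqrt{2\beta}$, and choose $c=c_n$ to center the heights. Since $h^\beta=\exp(\beta\log h)$ concentrates around $h\approx\beta$ in a window of size $\sqrt\beta$, a plausible choice is $\varphi(x)=\beta+x/2$ up to normalisation. Then
\[
\sqrt{2\beta}\,\cL(\eta_\beta)\overset{d}{=}\cL(\eta_{g_n}),\qquad g_n(x)=(2\beta)^{-\frac d2-1}c_{d+1,\beta}\,\bigl(\beta+x/2\bigr)^{\beta},\quad x\ge -2\beta,
\]
where I need to adjust $\lambda$ and $c$ so that the scaling really is $\sqrt{2\beta}$. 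Equivalently, I rescale space first and then use the freedom in $c$ and an extra multiplicative constant. A multiplicative constant in $f$ changes the intensity, and hence the spatial scale, again by \eqref{eq:LinearTransformation}, since multiplying by $e^{c/2}$ is the same as shifting $x$. Using Stirling's formula,
\[
\Bigl(1+\frac{x}{2\beta}\Bigr)^\beta\to e^{x/2},
\]
and the Gamma-function prefactors give constants of the form $\beta^{\beta}e^{-\beta}$ times powers of $\beta$. I expect that a suitable shift $c_n=\beta+O(\log\beta)$ absorbs these exactly and produces $g_n\to(2\pi)^{-\frac d2-1}e^{x/2}=\widetilde f$, possibly composed with a fixed translation, which does not change the law by \eqref{eq:LinearTransformation}. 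The same identity transports the dual tessellation and the skeletons, because scaling commutes with taking duals.

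\textbf{Verifying (C1).} Item (i) holds with $E_n=[-2\beta_n,\infty)\uparrow\RR$. For (ii), on compacts, $\log g_n(x)=\beta\log(1+x/2\beta)+\text{const}_n$ converges locally uniformly to $x/2+\text{const}$. For the tail $\int_{-\infty}^{x}$ on $[-2\beta,x]$, the elementary inequality $\log(1+t)\le t$ gives
\[
g_n(x)\le C e^{x/2},
\]
uniformly in $n$. Dominated convergence then gives local $L^1$ convergence on half-lines. The same domination gives (iii):
\[
\sup_n\int_{-\infty}^{0}|h|^{\frac d2+\delta}g_n(h)\,\dd h\le C\int_{-\infty}^0|h|^{\frac d2+\delta}e^{h/2}\,\dd h<\infty.
\]
Admissibility of each $g_n$ is automatic, since its domain has the form $[a,\infty)$. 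Admissibility of $\widetilde f$, whose domain is type (iii), is known from \cite{GKT21,gusakova2024PLT}. The condition $(\frI^1\widetilde f)(1/2)>0$ is clear.

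\textbf{Typical cells.} I would verify Theorem \ref{thm:ConvergenceTypicalCell}(i). Here $\frI^1 g_n(x)$ behaves like
\[
C\,\beta^{-1}\bigl(1+x/2\beta\bigr)^{\beta+1}\cdot 2\beta\approx C' (1+x/2\beta)^{\beta+1}.
\]
For the doubling-type bound,
\[
\frac{(\frI^1 g_n)(x+4d+2)}{(\frI^1 g_n)(x)}=\Bigl(1+\frac{4d+2}{2\beta+x}\Bigr)^{\beta+1}\le e^{2d+1+o(1)}
\]
for $x\ge x_0\ge 0$. For the growth bound, $(1+x/2\beta)^{\beta+1}\ge 1+x/2$, which is at least $\alpha x$, so the condition holds with $\varepsilon=1$. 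I expect the main obstacle to be the exact bookkeeping of constants in the rescaling step. One has to ensure that the normalizations $c_{d+1,\beta}$ and $(2\pi)^{-d/2-1}$, together with Stirling's formula, match precisely, or else absorb any discrepancy into a translation of $h$ that is permitted by \eqref{eq:LinearTransformation}. The remaining estimates are routine consequences of $\log(1+t)\le t$.
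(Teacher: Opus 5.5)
\textbf{The gap is in the rescaling step.} Your overall route is the paper's: rewrite $\sqrt{2\beta}\,\cL(\eta_\beta)$ via \eqref{eq:LinearTransformation} as $\cL(\eta_{f_n})$, check (C1) with limit $\widetilde f$ by dominating with $e^{h/2}$, apply Theorem~\ref{thm:weakConvPPPonSameSpaceDual}, and verify condition (i) of Theorem~\ref{thm:ConvergenceTypicalCell}. You leave the rescaling open, and the resolution you propose does not work.

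The heuristic that $h^\beta$ ``concentrates around $h\approx\beta$ in a window $\sqrt\beta$'' is wrong. $h^\beta$ is monotone; that picture belongs to a Gamma-type density $h^\beta e^{-h}$. What matters is the height of $\bd\Psi(\eta_\beta)$. The normalization gives $(\frI^{\frac d2+1}f_\beta)(t)=\pi^{-\frac d2-1}t^{\beta+\frac d2+1}$, so the relevant heights are $t=1+O(1/\beta)$, not $\beta+O(\sqrt\beta)$.

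Your displayed identity is therefore false. The function $(2\beta)^{-\frac d2-1}c_{d+1,\beta}(\beta+x/2)^\beta$ is $\beta^\beta$ times the correct density, so it blows up and cannot satisfy (C1)(ii). Moreover, since $\varphi(x)=\beta+x/2$ has $\lambda=1/2$, it describes in law $\cL(\eta_\beta)$ scaled by a factor tending to $\sqrt2$, not $\sqrt{2\beta}$.

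A shift $c_n=\beta+O(\log\beta)$ cannot repair this. With $\lambda=1/(2\beta)$ the transformed density is $(2\beta)^{-\frac d2-1}c_{d+1,\beta}\bigl(c_n+x/(2\beta)\bigr)^\beta$. This has a finite nonzero limit only if $c_n^\beta$ does, i.e.\ $c_n=1+O(1/\beta)$. Also, $c_{d+1,\beta}$ is a Gamma ratio with $\Gamma(\beta+\frac d2+2)/\Gamma(\beta+1)\sim\beta^{\frac d2+1}$, so Stirling produces no $\beta^\beta e^{-\beta}$ factor that a shift would need to absorb.

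\textbf{The fix.} Apply \eqref{eq:LinearTransformation} to $f_\beta$ with $\varphi(x)=1+x/(2\beta)$, i.e.\ $\lambda=1/(2\beta)$ and $c=1$. This gives exactly
\[
\sqrt{2\beta}\,\cL(\eta_\beta)\overset{d}{=}\cL(\eta_{f_n}),\qquad f_n(x)=c_{d+1,\beta}(2\beta)^{-\frac d2-1}\Bigl(1+\frac{x}{2\beta}\Bigr)^{\beta}\ind(x\ge-2\beta),
\]
with no further shift. The prefactor satisfies $c_{d+1,\beta}(2\beta)^{-\frac d2-1}\to(2\pi)^{-\frac d2-1}$ and stays bounded for $\beta\ge1$. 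Your domination $f_n\le Ce^{x/2}$ and your claimed convergence of $\text{const}_n$ silently need exactly this, and both fail for your displayed $g_n$.

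With this $f_n$, the rest of your argument is correct and coincides with the paper's: the verification of (C1)(i)--(iii), the ratio bound $\bigl(1+\frac{4d+2}{2\beta+x}\bigr)^{\beta+1}\le e^{2(2d+1)}$ for $x\ge0$, and the Bernoulli lower bound giving $(\frI^1 f_n)(x)\ge\alpha x$. One small point: take $(\beta_n)$ monotone so that $E_n=[-2\beta_n,\infty)\uparrow\RR$, and recover general $\beta\to\infty$ by a subsequence argument.
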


\begin{proof}
    Let $(\beta_n)_{n\in\NN}\subset [1,\infty)$ be a monotone sequence such that $\beta_n\to\infty$ and define 
    \begin{align}\label{eq:f_n}
    f_n(h):=c_{d+1,\beta_n}(2\beta_n)^{-\frac{d}{2}-1} \Big(1+\frac{h}{2\beta_n}\Big)^{\beta_n} \ind(h\ge -2\beta_n), \quad c_{d+1,\beta_n}:=\frac{\Gamma\left(\frac{d}{2}+\beta_n+2\right)}{\pi^{\frac{d}{2}+1}\Gamma(\beta_n+1)}.
    \end{align}
    By \eqref{eq:LinearTransformation} we have that $\sqrt{2\beta_n}\cL(\eta_{\beta_n})\overset{d}{=}\cL(\eta_{f_n})$.
    
    We start by showing that $(f_n)_{n\in\NN}$ satisfies \hyperref[item:C1]{(C1)} with $f(h)=(2\pi)^{-{d\over 2}-1}e^{h/2}$. It is easy to ensure that $f_n\in L_{\rm loc}^{1,+}([-2\beta_n,\infty))$ and $[-2\beta_n,\infty)\uparrow \RR$ as $n\to\infty$. By \cite[Proposition 3.6]{gusakova2024PLT} this directly implies that $f_n$ is admissible for every $n\in\NN$.
    
    We further note that by \cite[Paragraph 5.11.12]{NIST}
    \begin{equation}\label{eq:28.11.25_1}
    \lim_{n\to\infty}c_{d+1,\beta_n}(2\beta_n)^{-\frac{d}{2}-1}=\lim_{\beta_n\to\infty}\frac{\Gamma\left(\frac{d}{2}+\beta_n+2\right)}{(2\beta_n\pi)^{\frac{d}{2}+1}\Gamma(\beta_n+1)}=(2\pi)^{-{d\over 2}-1},
    \end{equation}
    and, moreover, for $\beta_n\ge 1$ by \cite[Paragraph 5.5.1]{NIST} we have
    \begin{equation}\label{eq:28.11.25_2}
        c_{d+1,\beta_n}(2\beta_n)^{-\frac{d}{2}-1}=\frac{(\frac{d}{2}+\beta_n+1)\cdots (\beta_n + \{\frac{d}{2}\})\Gamma\left(\beta_n+\{\frac{d}{2}\}\right)}{(2\beta_n\pi)^{\frac{d}{2}+1}\Gamma(\beta_n+1)}<\Big(\frac{\frac{d}{2}+2}{2\pi}\Big)^{\frac{d}{2}+1},
    \end{equation}
    where we denote by $\{\frac{d}{2}\}$ the fractional part of $\frac{d}{2}$.
    Further, since $1+y\leq e^y$ for any $y>-1$ we have that for any $h\in \RR$ and any $n\in\NN$ it holds that
    \[
        \Big(1+\frac{h}{2\beta_n}\Big)^{\beta_n} \ind(h\ge -2\beta_n)\leq e^{h/2}.
    \]
    Since for any $x\in \RR$ we have that $e^{h/2}\ind(h\leq x)$ is integrable, we conclude  by the dominated convergence theorem and using \eqref{eq:28.11.25_1} and \eqref{eq:28.11.25_2} that for any $x\in\RR$ we have 
    \begin{align*}
        \lim_{n\to\infty}\int_{-\infty}^x |f_n(h)-f(h)|\dd h
        &\leq \Big(\frac{\frac{d}{2}+2}{2\pi}\Big)^{\frac{d}{2}+1}\lim_{n\to\infty} \int_{-\infty}^x  \Big(e^{h/2} - \Big(1+\frac{h}{2\beta_n}\Big)^{\beta_n} \ind(h\ge -2\beta_n)\Big) \dd h\\
        &\qquad+2e^{x/2}\lim_{n\to\infty}|c_{d+1,\beta_n}(2\beta_n)^{-\frac{d}{2}-1}-(2\pi)^{-\frac{d}{2}-1}|=0.
    \end{align*}
    In order to verify the third condition in \hyperref[item:C1]{(C1)}, we choose $x_0=0$, $\delta=1$ and note that by \eqref{eq:28.11.25_2} and the estimate $1+y\leq e^y$ for $y\ge 1$ it holds that
    \[
        \int_{-\infty}^0|h|^{{d\over 2}+1}f_n(h)\dd h\leq \Big(\frac{\frac{d}{2}+2}{2\pi}\Big)^{\frac{d}{2}+1}\int_{-\infty}^0|h|^{{d\over 2}+1}e^{h/2}\dd h=\Big(\frac{\frac{d}{2}+2}{\pi}\Big)^{\frac{d}{2}+1}\Gamma\Big({d\over 2}\Big),
    \]
    for any $n\in\NN$. Thus, the conditions in \hyperref[item:C1]{(C1)} are satisfied. By Theorem \ref{thm:weakConvPPPonSameSpaceDual} the convergence of skeletons follows immediately. To show the convergence of the typical cells we use Theorem \ref{thm:ConvergenceTypicalCell} and show that condition (i) therein holds. For $x\ge -2\beta_n$ we have 
    \[
        (\frI^1 f_n)(x)=c_{d+1,\beta_n}(2\beta_n)^{-\frac{d}{2}} \frac{1}{\beta_n+1} \Big(1+\frac{x}{2\beta_n}\Big)^{\beta_n+1},
    \]
    and $(\frI^1 f_n)(x)=0$ for $x<-2\beta_n$. Choose $x_0=0$. Then for $x\ge 0$ and $\beta_n\ge 1$ we have
    \[
        \frac{(\frI^1 f_n)(x+4d+3)}{(\frI^1 f_n)(x)} = \Big(\frac{1+\frac{x+4d+3}{2\beta_n}}{1+\frac{x}{2\beta_n}}\Big)^{\beta_n+1}=\Big(1+\frac{4d+3}{2\beta_n+x}\Big)^{\beta_n+1}\le \exp\Big(\frac{(4d+3)(\beta_n+1)}{2\beta_n+x}\Big) \le \eee^{4d+3}.
    \]
    Further, for $x\ge 0$ the function $f_n(x)$ is increasing. Hence, for all $x\ge 0$ we have
    \[
        (\frI^1 f_n)(x)\ge x f_n(0), 
    \]
    and
    \[
        f_n(0)=c_{d+1,\beta_n}(2\beta_n)^{-\frac{d}{2}-1}=\frac{\Gamma\left(\frac{d}{2}+\beta_n+2\right)}{(2\beta_n\pi)^{\frac{d}{2}+1}\Gamma(\beta_n+1)} \ton (2\pi)^{-{d\over 2}-1}>0.
    \]
    Then there exists $n_0\in \NN$ such that for all $n\ge n_0$ we have $f_n(0)\ge \frac{1}{2}(2\pi)^{-{d\over 2}-1}=:\alpha>0$ and, hence, $(\frI^1 f_n)(x)\ge \alpha x$.
\end{proof}

Together with the $\beta$-model yet another special model of Poisson-Laguerre tessellations was introduced in \cite{GKT20} and studied in \cite{GKT21b,sectional,GKT21,GKT21a}. For $\beta > d/2+1$ let $\eta'_{\beta}$ be a Poisson point process in $\RR^{d}\times (-\infty,0)$ having intensity measure of the form \eqref{eq:IntensityMeasure} with density
\begin{equation*}
    f'_\beta(h)=c'_{d+1,\beta}(-h)^{-\beta},\qquad h<0, \qquad c'_{d+1,\beta}:=\frac{\Gamma\left(\beta\right)}{\pi^{\frac{d}{2}+1}\Gamma(\beta-{d\over 2}-1)}.
\end{equation*} 
Again, note that a slightly different normalization $\pi^{-\frac{d+1}{2}}\Gamma\left(\beta\right)/\Gamma(\beta-{d+1\over 2})$ was used in \cite{GKT20}. The normal random tessellation $\cL(\eta'_{\beta})$ is called \textit{$\beta'$-Voronoi tessellation} and the dual tessellation $\cL^*(\eta'_{\beta})$ is called the \textit{$\beta'$-Delaunay tessellation}. Further in \cite{GKT21} it was shown that $\sk{L}^*(\widetilde\eta)$ is also a weak limit of the skeletons of $\beta'$-Delaunay tessellation after scaling by $\sqrt{2\beta}$ and as $\beta\to\infty$. With our general Theorem \ref{thm:weakConvPPPonSameSpaceDual} we can also recover this result and extend it to the corresponding Voronoi-type models.

\begin{corollary}\label{cor:BetaPrimeToGaussian}
    The random closed sets $\sqrt{2\beta}\sk{L}(\eta'_{\beta})$ and $\sqrt{2\beta}\sk{L}^*(\eta'_{\beta})$ converge weakly to $\sk{L}(\widetilde\eta)$ and $\sk{L}^*(\widetilde\eta)$, respectively, as $\beta\to\infty$.
\end{corollary}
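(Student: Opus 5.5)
Following the proof of Corollary \ref{cor:BetaToGaussian}, we take a sequence $(\beta_n)_{n\in\NN}$ with $\beta_n>d/2+2$, $\beta_n\to\infty$, and use \eqref{eq:LinearTransformation} with $\varphi(x)=\lambda x+c$. The goal is a density $g_n$ with $\sqrt{2\beta_n}\,\cL(\eta'_{\beta_n})\overset{d}{=}\cL(\eta_{g_n})$ and $g_n\to f:=(2\pi)^{-d/2-1}e^{h/2}$. Set $\lambda=1/(2\beta_n)$ and $c=-1$, so $\varphi(h)=h/(2\beta_n)-1$. This gives
\[
    g_n(h):=(2\beta_n)^{-\frac d2-1}c'_{d+1,\beta_n}\Big(1-\frac{h}{2\beta_n}\Big)^{-\beta_n}\ind(h<2\beta_n),
\]
defined on $E_n=(-\infty,2\beta_n)\uparrow\RR$. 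Each $g_n$ is admissible, since it is a linear reparametrization of the admissible $f'_{\beta_n}$; the blow-up condition of type (ii) is preserved under affine maps. Because \eqref{eq:LinearTransformation} is an equality in distribution of tessellations, the skeletons are equal in distribution too. Corollary \ref{cor:BetaPrimeToGaussian} therefore follows from Theorem \ref{thm:weakConvPPPonSameSpaceDual}, provided (C1) holds for $(g_n)$ and $f$.

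First I would treat the constants. By \cite[5.11.12]{NIST}, $\Gamma(\beta)/\Gamma(\beta-\frac d2-1)\sim\beta^{d/2+1}$, so
\[
    (2\beta_n)^{-\frac d2-1}c'_{d+1,\beta_n}\to(2\pi)^{-\frac d2-1}.
\]
Being convergent, this sequence is bounded by some constant $K$.

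Next, fix $x$. For $n$ large enough that $2\beta_n>x$, we have $(1-h/(2\beta_n))^{-\beta_n}\to e^{h/2}$ pointwise for $h\le x$. For domination I would use $\log(1-y)\le -y$ for $y<1$, which gives $(1-y)^{-\beta}\ge e^{\beta y}$; this inequality runs the wrong way, so a different bound is needed. For $h\le 0$, write $u=-h/(2\beta_n)\ge0$ and use $(1+u)^{-\beta_n}$. Since $\log(1+u)\ge u/(1+u)$, this is at most $\exp(-\beta_n u/(1+u))$. Simpler is $(1+u)^{-\beta_n}\le(1+u)^{-m}$ for a fixed $m>d/2+2$ once $\beta_n\ge m$. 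As $(1+u)^{-m}$ is decreasing in $\beta_n$, one can instead compare $(1+|h|/(2\beta_n))^{\beta_n}$ from below by its value at $\beta_n=m$, using monotonicity of $\beta\mapsto(1+a/\beta)^\beta$. This gives
\[
    \Big(1+\frac{|h|}{2\beta_n}\Big)^{-\beta_n}\le\Big(1+\frac{|h|}{2m}\Big)^{-m},
\]
which is integrable on $(-\infty,0]$ and even against $|h|^{d/2+1}$, since $m>d/2+2$. On $[0,x]$ the integrands are bounded uniformly, because $(1-y)^{-\beta}\le e^{\beta y/(1-y)}$ and $y\le x/(2\beta_n)\le 1/2$. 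Dominated convergence combined with the constant bound $K$ then gives (C1)(ii), and (C1)(iii) holds with $x_0=0$, $\delta=1$, with $M\le K\int_{-\infty}^0|h|^{d/2+1}(1+|h|/(2m))^{-m}\dd h<\infty$.

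The condition $(\frI^1f)(1/2)>0$ is immediate since $f>0$. The main obstacle is this uniform domination of the lower tail. Unlike the $\beta$ case there, $1+y\le e^y$ no longer gives the bound, and one must exploit that $\beta_n$ is large relative to $d/2+2$ to keep the polynomial tail integrable uniformly in $n$. After that, Theorem \ref{thm:weakConvPPPonSameSpaceDual} yields both claimed weak convergences.
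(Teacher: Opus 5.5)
Your proposal is correct and is essentially the paper's proof. It uses the same affine reparametrization producing $g_n$, the same Gamma-ratio asymptotics from \cite[5.11.12]{NIST}, and the same domination via monotonicity of $\beta\mapsto(1+a/\beta)^{\beta}$ to verify \hyperref[item:C1]{(C1)} before invoking Theorem~\ref{thm:weakConvPPPonSameSpaceDual}. The differences are cosmetic ($\delta=1$ with a fixed exponent $m>d/2+2$ instead of the paper's $\delta=1/2$ with $\beta_n\ge d+1$, and a separate bound on $[0,x]$), though you should take $(\beta_n)$ monotone, as the paper does, so that $E_n\uparrow\RR$ and $m=\beta_1$ serves all $n$ in (C1)(iii), and you can drop the confused intermediate remark about $(1+u)^{-m}$, since the monotonicity bound does all the work.
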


\begin{proof} 
    The proof is similar to the first part of the proof of Corollary \ref{cor:BetaToGaussian}. Let $(\beta_n)_{n\in\NN}\subset [d+1,\infty)$ be a monotone sequence such that $\beta_n\to\infty$ and define 
    \[
        f_n(h):=c'_{d+1,\beta_n}(2\beta_n)^{-\frac{d}{2}-1} \Big(1-\frac{h}{2\beta_n}\Big)^{-\beta_n} \ind(h< 2\beta_n).
    \]
    By \eqref{eq:LinearTransformation} we have that $\sqrt{2\beta_n}\cL(\eta'_{\beta_n})\overset{d}{=}\cL(\eta_{f_n})$ and by Theorem \ref{thm:weakConvPPPonSameSpaceDual} it is enough to ensure that $(f_n)_{n\in\NN}$ satisfies \hyperref[item:C1]{(C1)} with $f(h)=(2\pi)^{-{d\over 2}-1}e^{h/2}$. We note that $f_n\in L_{\rm loc}^{1,+}((-\infty,-2\beta_n))$ and $(-\infty,-2\beta_n)\uparrow \RR$ as $n\to\infty$. By \cite[Example 3.9 and Proposition 3.12]{gusakova2024PLT} we have that $f_n$ is admissible for every $n\in\NN$.
    
    By \cite[Paragraph 5.11.12]{NIST} we obtain
    \begin{equation}\label{eq:28.11.25_3}
        \lim_{n\to\infty}c'_{d+1,\beta_n}(2\beta_n)^{-\frac{d}{2}-1}=\lim_{\beta_n\to\infty}\frac{\Gamma\left(\beta_n\right)}{(2\beta_n\pi)^{\frac{d}{2}+1}\Gamma(\beta_n-{d\over 2}-1)}=(2\pi)^{-{d\over 2}-1},
    \end{equation}
    and, by \cite[Paragraph 5.5.1]{NIST} we get
    \begin{equation}\label{eq:28.11.25_4}
        c'_{d+1,\beta_n}(2\beta_n)^{-\frac{d}{2}-1}=\frac{(\beta_n-1)\cdots (\beta_n - \lfloor \frac{d}{2} \rfloor)\Gamma\left(\beta_n-\lfloor \frac{d}{2} \rfloor\right)}{(2\beta_n\pi)^{\frac{d}{2}+1}\Gamma(\beta_n-{d\over 2}-1)}<(2\pi)^{-\frac{d}{2}-1}.
    \end{equation}
    Fix $x\in\RR$. Let $n_0=n_0(x)\in\NN$ be such that $x\leq \beta_{n_0}\leq \beta_n$ for any $n>n_0$. 
    Since $\frac{h}{2\beta_n}<1$ we have by \cite[Equation (4.5.2)]{NIST} that 
    \[
        -\log\big(1-\frac{h}{2\beta_n}\big)\ge \frac{h}{2\beta_n}.
    \]
    Multiplying by $\beta_n$ and taking exponentials yields
    \begin{equation*}\label{eq:28.11.25_5}
        e^{h/2}\ind(h\leq x)\leq \Big(1-\frac{h}{2\beta_n}\Big)^{-\beta_n} \ind(h< 2\beta_n)\ind(h\leq x)\leq \Big(1-\frac{h}{2\beta_{n_0}}\Big)^{-\beta_{n_0}}\ind(h\leq x),
    \end{equation*}
    for any $n\ge n_0$. Applying the dominated convergence theorem and \eqref{eq:28.11.25_3} and \eqref{eq:28.11.25_4} we conclude  for any $x\in\RR$ that
    \begin{align*}
        \lim_{n\to\infty}\int_{-\infty}^x |f_n(h)-f(h)|\dd h
        &\leq (2\pi)^{-\frac{d}{2}-1} \lim_{n\to\infty}\int_{-\infty}^x |\Big(1-\frac{h}{2\beta_n}\Big)^{-\beta_n} \ind(h< 2\beta_n)-\eee^{\frac{h}{2}}| \dd h\\
        &\quad+2\eee^{\frac{x}{2}}\lim_{n\to\infty}|c'_{d+1,\beta_n}(2\beta_n)^{-\frac{d}{2}-1} -(2\pi)^{-\frac{d}{2}-1}|=0.
    \end{align*}
    For the third condition in \hyperref[item:C1]{(C1)} we choose $x_0=0$, $\delta=1/2$ and note that by \eqref{eq:28.11.25_4} and \eqref{eq:28.11.25_5} for any $n\in\NN$ it holds
    \[
    \int_{-\infty}^0|h|^{{d+1\over 2}}f_n(h)\dd  h\leq ({2\pi})^{-\frac{d}{2}-1}\int_{-\infty}^0|h|^{{d+1\over 2}}\Big(1-{h\over 2(d+1)}\Big)^{-d-1}\dd h={(2(d+1))^{d+2}\Gamma({d-1\over 2})\Gamma({d+3\over 2})\over ({2\pi})^{\frac{d}{2}+1}d!},
    \]
    since $\beta_n\ge d+1>0$ for any $n\in\NN$. Hence, the conditions in \hyperref[item:C1]{(C1)} are satisfied.
    
\end{proof}

\section{Technical preparations}\label{sec:Technical}

The strategy of the proofs of Theorem \ref{thm:StrongProbConvLaguerre} and Theorem \ref{thm:StrongConvergenceVoronoi} (i) relies on two key steps. In the first step we consider the configuration of a given tessellation, restricted to the ball $B_R$, where $R>0$, and look for the region $K\subset \RR^d\times \RR$, such that with big probability the configuration of a tessellation in a ball depends only on the restriction of the corresponding Poisson point process to $K$. This kind of results are generally called "stabilization". The region $K$ is later used to construct a suitable sequence of point processes, defined on the same probability space using a coupling argument.

In this section we collect a few technical lemmas related to this stabilization. The proofs of these lemmas are postponed to Section \ref{sec:proofs}. 

\subsection{Paraboloid growth and paraboloid hull process}\label{subsec:Construction}

We begin with an alternative description of a Laguerre tessellation and its dual in terms of the paraboloid growth and hull processes, respectively. This description appeared to be very useful for understanding geometric properties of the corresponding tessellation models and provides a useful framework for stabilization results, stated in Section \ref{sec:Stabilization}. The growth processes were first introduced by Baryshnikov \cite{Bar00} and studied later in \cite{SY08}. The paraboloid growth and hull processes were formalized in \cite{CSY13} where they have been used in particular to study the asymptotic properties of random polytopes \cite{CY14, CY15, CY17}.

Let $\Pi^{-}$ (respectively, $\Pi^+$) be the standard downward  (respectively, upward) paraboloid, defined as
\begin{align*}
    \Pi^{\pm}
    &:=
    \{(v',h')\in\RR^{d}\times\RR\colon h'=\pm\|v'\|^2\}.
\end{align*}
Further, let $\Pi^{\pm}_{x}$ be the translation of $\Pi^{\pm}$ by a vector $x:=(v,h)\in\RR^{d+1}$, that is,
\[
    \Pi^{\pm}_x:=\{(v',h')\in\RR^{d}\times\RR\colon h'=\pm\|v'-v\|^2+h\}.
\]
The point $x$ is the \textit{apex} of the paraboloid $\Pi^{\pm}_x$ and we write $\apex(\Pi^{\pm}_x)=x$. Moreover, we denote by
\begin{align*}
(\Pi_x^-)^{\downarrow}:=&\{(v,h')\in\RR^{d}\times\RR\colon h'\leq-\|v'-v\|^2+h\},\\
(\Pi_x^+)^{\uparrow}:=&\{(v,h')\in\RR^{d}\times\RR\colon h'\ge \|v'-v\|^2+h\},
\end{align*}
the \textit{hypograph} and \textit{epigraph} of $\Pi_x^-$ and $\Pi_x^+$, respectively.

\paragraph*{Paraboloid growth process.}

For a given Poisson point process $\eta$ in $\RR^d\times \RR$ the we introduce (as in \cite{CSY13}) the \textit{paraboloid growth process}
\begin{align*}
    \Psi(\eta):=\bigcup\limits_{x\in \eta}\big(\Pi^{+}_{x}\big)^\uparrow.
\end{align*}
It shall be noted that most of these paraboloids do not contribute to the process because their epigraphs are fully covered by the epigraphs of other paraboloids. We call a point $x\in\eta$ \textit{extreme} in the paraboloid growth process $\Psi(\eta)$ if
\begin{align*}
    \big(\Pi^{+}_{x}\big)^\uparrow\not\subset \bigcup\limits_{y\in \eta, y\neq x}\big(\Pi^{+}_{y}\big)^\uparrow.
\end{align*}
We denote by $\ext(\Psi(\eta))$ the set of all extreme points of $\Psi(\eta)$. 

As shown in \cite[p.1486]{sectional} the paraboloid growth process yields an alternative construction of $\cL(\eta)$ if the latter is a random tessellation. Note, that for any $(v,h)\in \eta$,
\[
(\Pi^+_{(v,h)})^{\uparrow}=\{(w,t)\in \RR^d\times\RR\colon \pow(w,(v,h))\leq t\}
\]
and, hence,
\begin{equation}\label{eq:ParaboloidGrowthBoundary}
{\rm bd}\Psi(\eta)=\{(w,t)\in \RR^d\times \RR\colon t=\inf_{(v',h')\in\eta}\pow(w,(v',h'))\},
\end{equation}
where the minimum is achieved since every $w\in\RR^d$ belongs to at least one of the cells $C((v,h),\eta)$, $(v,h)\in\eta$. Then the Laguerre cell of $(v,h)\in \eta$ can be written as
\begin{align*}
    C((v,h),\eta)=\big\{w\in\RR^d\colon (v,h)=\argmin_{(v',h')\in\eta}\pow(w,(v',h'))\big\}=\proj_{\RR^d}\big( \bd  \Psi(\eta) \cap\Pi^+_{(v,h)}\big),
\end{align*}
which is non-empty if and only if $(v,h)\in\ext(\Psi(\eta))$. Hence,
\[
    \cL(\eta):=\{C((v,h),\eta):(v,h)\in \eta, \inter C((v,h),\eta)\neq \varnothing\}=\{C((v,h),\eta):(v,h)\in \ext(\Psi(\eta))\}.
\]
Using this relation, we may alternatively work with the paraboloid growth process $\Psi(\eta)$ instead of $\cL(\eta)$, which is more convenient in some situations. 

\paragraph*{Paraboloid hull process.}

For a given Poisson point process $\eta$ in $\RR^d\times\RR$ we define the \textit{paraboloid hull process} $\Phi(\eta)$, which can be seen as the dual to the paraboloid growth process (see Figure \ref{fig:GrowthAndHull})
\[
    \Phi(\eta)=\bigcup_{x\in \RR^{d}\times\RR\colon \inter (\Pi^-_x)^{\downarrow}\cap\eta=\varnothing}(\Pi^-_x)^{\downarrow}.
\]
For $d+1$ points $x_1=(v_1,h_1),\dots,x_{d+1}=(v_{d+1},h_{d+1})$ with affinely independent spatial coordinates $v_1,\dots,v_{d+1}$ we define by $\Pi^-(x_1,\dots,x_{d+1})$ the unique translate of $\Pi^-$ containing $x_1,\dots,x_{d+1}$ and denote by
\[
    \Pi^-[x_1,\dots,x_{d+1}]:=\Pi^-(x_1,\dots,x_{d+1})\cap(\conv(v_1,\dots,v_{d+1})\times \RR).
\] 
For points $x_1,\ldots,x_{d+1}\in\eta$ with affinely independent spatial coordinates we call the set 
\[
    \Pi^-(x_1,\dots,x_{d+1})\cap \bd \Phi(\eta)
\]
a \textit{paraboloid facet} of $\Phi(\eta)$ if 
\[
\inter\big((\Pi^-(x_1,\dots,x_{d+1}))^{\downarrow}\big)\cap \eta=\varnothing.
\]
We note that in general $\Pi^-(x_1,\dots,x_{d+1})\cap \bd \Phi(\eta)$ may contain further points of $\eta$, but for $\eta_f$ or $\eta^{\gamma}$ this won't happen with probability $1$ and we have
\[
    \Pi^-(x_1,\dots,x_{d+1})\cap \bd \Phi(\eta)=\Pi^-[x_1,\dots,x_{d+1}]
\]
for any paraboloid facet of $\Phi(\eta)$.
The points $x_1,\dots,x_{d+1}$ are called the {vertices} of the corresponding paraboloid facet. The collection of all vertices of $\Phi(\eta)$ is denoted by $\ver(\Phi(\eta))$ and in \cite[Equation (3.17)]{CSY13} it is shown that
\begin{equation}\label{eq:VertExtermal}
    \ext(\Psi(\eta))=\ver(\Phi(\eta)).
\end{equation}

Assuming that $\cL(\eta)$ is a random normal tessellation, we can construct the dual Laguerre tessellation $\cL^*(\eta)$ using the paraboloid hull process $\Phi(\eta)$ the following way: for any pairwise distinct points $x_1=(v_1,h_1),\dots,x_{d+1}=(v_{d+1},h_{d+1})$ from $\eta$ we say that the simplex $\conv(v_1,\dots,v_{d+1})$ belongs to $\cL^*(\eta)$ if and only if $\inter\big((\Pi^-(x_1,\dots,x_{d+1}))^{\downarrow}\big)\cap \eta=\varnothing$, i.e. if and only if $\Pi^-(x_1,\dots,x_{d+1})\cap \bd\Phi(\eta)$ is a paraboloid facet of $\Phi(\eta)$. This also implies that
\begin{equation}\label{eq:ParaboloidFacetSimplex}
    \conv(v_1,\dots,v_{d+1})=\proj_{\RR^d}\big(\Pi^-(x_1,\dots,x_{d+1})\cap \bd \Phi(\eta)\big)=\proj_{\RR^d}\big(\Pi^-[x_1,\dots,x_{d+1}]\big).
\end{equation}
\begin{figure}
\begin{center}
\includegraphics[width=0.8\textwidth]{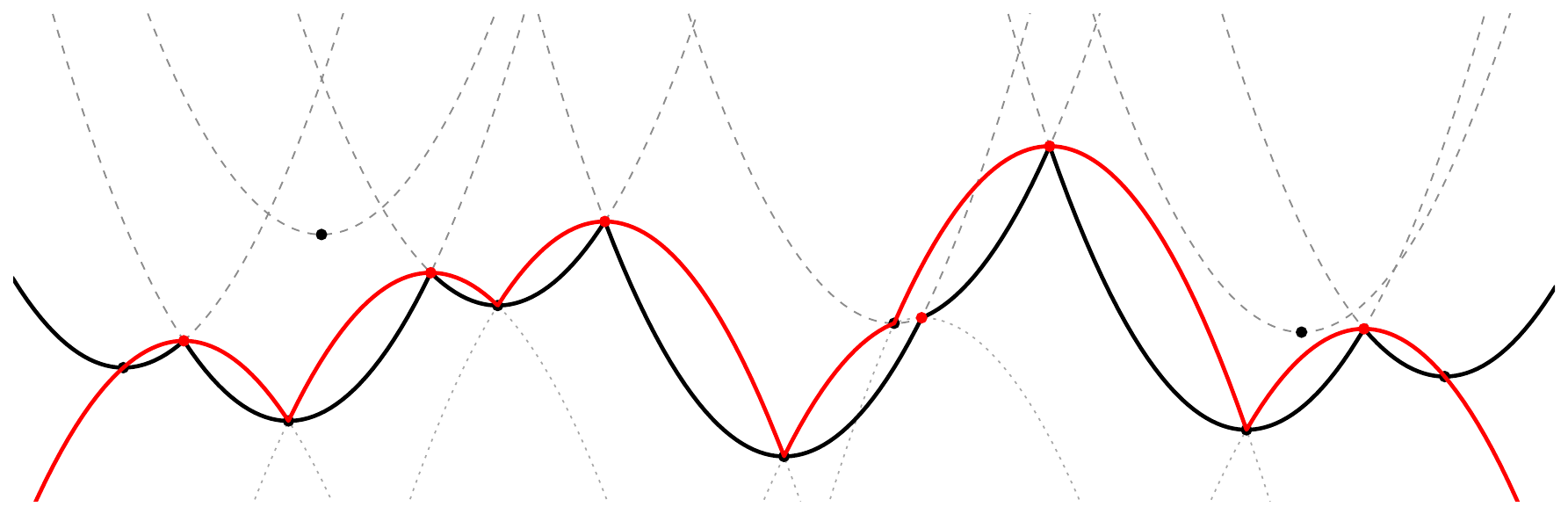}    
  
\end{center}
\caption{The paraboloid growth (black) and hull processes (red) in $\RR^2$.} \label{fig:GrowthAndHull}

\end{figure}

\subsection{Stabilization lemmas}\label{sec:Stabilization}

Let $\eta$ be a Poisson point process in $\RR^d\times \RR$. For $a>0$, $t,T\in \RR$ we define the events 
\begin{equation}\label{eq:Hmin}
\begin{aligned}
    \cH^{\rm{max}}(\eta,a,T)&:=\big\{\sup_{w\in B_a}\inf_{(v,h)\in \eta}\pow(w,(v,h))\le T\big\},\\
    \cH^{\rm{min}}(\eta,a,t)&:=\big\{\inf_{w\in B_a}\inf_{(v,h)\in \eta}\pow(w,(v,h))\ge t\big\}.
\end{aligned}
\end{equation}
By \eqref{eq:ParaboloidGrowthBoundary} the event $\cH^{\rm{max}}(\eta,a,T)$ can be described as follows: the boundary of $\Psi(\eta)$ restricted to $B_a\times \RR$ is contained in $B_a\times(-\infty,T]$, i.e. does not exceed $T$ within the ball $B_a$. Similarly, the event $\cH^{\rm{min}}(\eta,a,t)$ means that the boundary of $\Psi(\eta)$ restricted to $B_a\times \RR$ is contained in $B_a\times[t,\infty)$, i.e. lies above $t$ within the ball $B_a$. The next lemma provides the upper bounds for the probability of the events $\cH^{\rm{max}}(\eta,a,T)$ and $\cH^{\rm{min}}(\eta,a,t)$ with $\eta=\eta_f$, where $f$ is admissible, and $\eta=\eta^{\gamma}$, $\gamma>0$, defined as in Section \ref{subsec:laguerreTess}.

\begin{lemma}\label{lm:BoundaryBounds}
    The following assertions hold. 
        \begin{enumerate}
            \item 
            \begin{enumerate}
                \item Let $f$ be admissible, then for any $a>0$ and $T> 4a^2$ we have 
                    \[
                        \PP\big(\cH^{\rm{max}}(\eta_f,a,T)^c\big)\leq\exp\Big(-\pi^{\frac{d}{2}} 2^{-d}  (\frI^{\frac{d}{2}+1}f)(T-4a^2)\Big).
                    \]
                If moreover $(\frI^1f)(x_0)>0$ for some $x_0\in\RR$, then for any $T\ge 4 a^2+2 x_0$ we have
                \begin{align*}
                    \PP\big(\cH^{\rm{max}}(\eta_f,a,T)^c\big)&\leq \exp\Big(- \big(\frac{\pi}{8}\big)^{\frac{d}{2}} {(\frI^1f)(x_0)\over \Gamma(d/2+1)}(T-4a^2)^{d\over 2}\Big).
                \end{align*}
                \item  Let $f$ be admissible, then for any $a>0$ and $t\in \RR$ we have 
                \[
                    \PP\big(\cH^{\rm{min}}(\eta_f,a,t)^c\big)\leq 2^d\kappa_d \Big((\frI^{\frac{d}{2}+1}f)(t)+a^{d}(\frI^{1}f)(t)\Big).
                \]
            \end{enumerate}

            \item 
            \begin{enumerate}
                \item For any $a>0$ and $T\ge a^2$ we have
                    \[
                        \PP\big(\cH^{\rm{max}}(\eta^\gamma,a,T)^c\big)\leq
                            \exp(-\gamma \kappa_d (\sqrt{T}-a)^d).
                    \]
                \item For any $a>0$ and $t< 0$ we have $\PP\big(\cH^{\rm{min}}(\eta^\gamma,a,t)^c\big) = 0$.
            \end{enumerate}
        \end{enumerate}          
\end{lemma}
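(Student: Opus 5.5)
We treat the four assertions one at a time. Each reduces to a void probability of the Poisson process in a suitable region of $\RR^d\times\RR$.

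\emph{Part 1(a).} First, a covering argument. If some $w\in B_a$ has $\inf_{(v,h)\in\eta}\pow(w,(v,h))>T$, then $\eta$ has no point in the hypograph region $\{(v,h)\colon \|w-v\|^2+h\le T\}$. For $w\in B_a$ and $\|v\|\le\rho$ we have $\|w-v\|^2\le 2\|v\|^2+2a^2$. Hence the fixed region
\[
D:=\{(v,h)\colon 2\|v\|^2+h\le T-2a^2\}\subseteq\{(v,h)\colon \|w-v\|^2+h\le T\}\quad\text{for every } w\in B_a .
\]
To land exactly on the constants $T-4a^2$ and $2^{-d}$, I would use the cruder bound $\|w-v\|^2\le 2\|v\|^2+2\|w\|^2$ together with a slightly different shape, namely the set $\{(v,h)\colon 4\|v\|^2+h\le T-4a^2\}$, or equivalently the scaled variant. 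Then
\[
\PP(\cH^{\max}(\eta_f,a,T)^c)\le\exp(-\Lambda_f(D')).
\]
Computing $\Lambda_f(D')$ means integrating the volume of the ball of radius $\sqrt{(T-4a^2-h)/4}$ against $f(h)$:
\[
\Lambda_f(D')=\kappa_d 2^{-d}\int_{-\infty}^{T-4a^2} f(h)\,(T-4a^2-h)^{d/2}\,\dd h=\kappa_d2^{-d}\,\Gamma\big(\tfrac d2+1\big)\,\big(\frI^{\frac d2+1}f\big)(T-4a^2).
\]
Since $\kappa_d\Gamma(d/2+1)=\pi^{d/2}$, this gives exactly the first bound.

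For the second bound, restrict the integral to $h\le x_0$ and use $T-4a^2-h\ge T-4a^2-x_0\ge (T-4a^2)/2$, which holds because $T-4a^2\ge 2x_0$. (If $x_0<0$ one handles the sign separately, or notes that $(\frI^1 f)(x_0)>0$ can be replaced by the same statement at a larger point.) This yields
\[
\Lambda_f(D')\ge \kappa_d 2^{-d}2^{-d/2}(T-4a^2)^{d/2}(\frI^1f)(x_0).
\]
Writing $\kappa_d=\pi^{d/2}/\Gamma(d/2+1)$, this is $(\pi/8)^{d/2}(\frI^1f)(x_0)(T-4a^2)^{d/2}/\Gamma(d/2+1)$, as claimed.

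\emph{Part 1(b).} If $\cH^{\min}(\eta_f,a,t)$ fails, then some point $(v,h)\in\eta$ and some $w\in B_a$ satisfy $\|w-v\|^2+h<t$. This forces $(v,h)$ into the set $U:=\{(v,h)\colon h<t,\ \|v\|\le a+\sqrt{t-h}\}$. By Markov's inequality,
\[
\PP\le\EE\,\eta(U)=\kappa_d\int_{-\infty}^t f(h)\,(a+\sqrt{t-h})^d\,\dd h .
\]
Applying $(a+b)^d\le 2^{d-1}(a^d+b^d)\le 2^d(a^d+b^d)$ splits this into $a^d(\frI^1f)(t)$ and $\int f(h)(t-h)^{d/2}\,\dd h=\Gamma(d/2+1)(\frI^{d/2+1}f)(t)$. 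The Gamma factor is harmless: I would absorb it by noting the bound uses $2^{d-1}$ rather than $2^d$ and checking $\Gamma(d/2+1)\le 2$ for small $d$. For larger $d$ this may need a slightly different grouping, and this constant bookkeeping is the only delicate point.

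\emph{Part 2.} For $\eta^\gamma$ all heights are $0$. The event $\cH^{\max}$ fails exactly when some $w\in B_a$ has no point of $\eta^\gamma$ within distance $\sqrt T$. Such a $w$ exists only if the ball $B_{\sqrt T-a}$ contains no point, since $B_{\sqrt T-a}\subseteq w+B_{\sqrt T}$. This gives $\exp(-\gamma\kappa_d(\sqrt T-a)^d)$. Finally, for $\cH^{\min}$, powers are squared distances and hence nonnegative, so they are always $\ge 0>t$; the failure probability is therefore $0$.

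The main obstacle is matching the exact constants in 1(a) and 1(b). The mechanism itself (Poisson void probability, Markov/first-moment bound, and fractional-integral identities) is straightforward.
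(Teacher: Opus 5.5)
Your arguments for 1(a), 2(a) and 2(b) are correct, and so is your computation in 1(b). The one step you leave open, absorbing $\Gamma(\frac d2+1)$ for larger $d$ by ``a slightly different grouping'', cannot be carried out, because the constant $2^d\kappa_d$ in 1(b) is false in general. For every $a>0$,
\[
\PP\bigl(\cH^{\rm min}(\eta_f,a,t)^c\bigr)=1-\exp\Bigl(-\kappa_d\int_{-\infty}^t\bigl(a+\sqrt{t-h}\bigr)^d f(h)\,\dd h\Bigr)\ge 1-\exp\bigl(-\pi^{d/2}(\frI^{\frac d2+1}f)(t)\bigr).
\]
On the other hand, the claimed right-hand side tends to $2^d\pi^{d/2}(\frI^{\frac d2+1}f)(t)/\Gamma(\frac d2+1)$ as $a\downarrow 0$. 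The quantity $x=\pi^{d/2}(\frI^{\frac d2+1}f)(t)$ can be made arbitrarily small and positive: take $f=c\ind_{[0,\infty)}$, $t>0$, and let $c\downarrow 0$. Since $1-e^{-x}\sim x$, the claim would force $\Gamma(\frac d2+1)\le 2^d$, which fails for every $d\ge 18$. An $f$ of small mass concentrated near height $t-\frac94 a^2$ breaks the claim already for $d=6$.

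The paper's proof gets past this point only by dropping that factor. It bounds $\int_{-\infty}^{t-a}(t-h)^{d/2}f(h)\,\dd h$ by $(\frI^{\frac d2+1}f)(t)$, whereas in fact $\int_{-\infty}^{t}(t-h)^{d/2}f(h)\,\dd h=\Gamma(\frac d2+1)(\frI^{\frac d2+1}f)(t)$. What your argument actually proves is
\[
\PP\bigl(\cH^{\rm min}(\eta_f,a,t)^c\bigr)\le 2^{d-1}\Bigl(\pi^{d/2}(\frI^{\frac d2+1}f)(t)+\kappa_d a^d(\frI^1f)(t)\Bigr),
\]
and this is the correct form. It implies the stated bound when $\Gamma(\frac d2+1)\le 2$, i.e.\ $d\le 4$. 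In every dimension it is all that is used later, since Lemma~\ref{lm:StabRadius} and the proof of Theorem~\ref{thm:StrongProbConvLaguerre} only need some constant depending on $d$. So the fix belongs in the statement, not in a sharper split.

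In 1(a) your route differs mildly from the paper's. The paper uses the exact region $K_3(a,T)=\{(v,h)\colon(\|v\|+a)^2+h\le T\}$ of points whose power stays $\le T$ on all of $B_a$. It then shrinks this region via $\sqrt{T-h}-a\ge\frac12\sqrt{T-h}$ for $h\le T-4a^2$. You instead use $\|w-v\|^2\le 2\|v\|^2+2a^2$, which gives a paraboloid-shaped region whose measure is a fractional integral in closed form. Your first region $D$ already yields $\exp\bigl(-2^{-d/2}\pi^{d/2}(\frI^{\frac d2+1}f)(T-2a^2)\bigr)$. This implies the stated bound by monotonicity of $\frI^{\frac d2+1}f$, so the passage to $D'\subset D$ is correct but unnecessary. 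For the second bound, restricting to $h\le x_0$ is the same estimate the paper obtains through the semigroup property $\frI^{\frac d2+1}f=\frI^{\frac d2}\frI^1 f$. Your caveat about $x_0<0$ is harmless: under $T>4a^2$, the inequality $T-4a^2-h\ge\frac12(T-4a^2)$ for $h\le x_0<0$ is immediate.
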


Next for a given Poisson point process $\eta$ in $\RR^d\times \RR$ and $R,r>0$ consider the event 
\begin{align}
     \cE(\eta, R,r):=\Big\{\bigcup_{S\in \cL^*(\eta), S\cap B_R\neq \varnothing} \apex (\Pi^-(S)) \in \big(\Pi^{-}_{(\origin,(R+r)^2)}\big)^{\downarrow}\Big\},\label{eq:Eevent}
\end{align}  
where $\Pi^{-}(S)$ determines the unique downward paraboloid containing $d+1$ points $x_i=(v_i,h_i)\in\eta$, $1\leq i\leq d+1$ with affinely independent spatial coordinates, such that $\conv(v_1,\ldots,v_{d+1})=S$. Roughly speaking, this event allows to control the location of points of $\eta$, which determine the skeleton of $\sk{L}^*(\eta)$ within the ball $B_R$.
Using Lemma \ref{lm:BoundaryBounds} we can now obtain an upper bound for the probability of the events $\cE(\eta,a,T)$ with $\eta=\eta_f$, where $f$ is admissible, and $\eta=\eta^{\gamma}$, $\gamma>0$.

\begin{lemma}\label{lm:StabRadius}
    Let $R>0$ and $r>2\max(1,R)$. Then the following assertions hold
    \begin{enumerate}
        \item Let $f$ be an admissible function satisfying $(\frI^1f)(1/2)>0$, then there exist $c_1,C_1\in (0,\infty)$, such that 
        \begin{align*}
            \PP\big(\cE(\eta_{f},R,r)^c\big)<C_1 \Big(\exp(-c_1(\frI^1f)(1/2)\,r^d) + \big(\frI^{\frac{d}{2}+1}f\big)\big(-r^2/8\big)+R^d(\frI^1 f)(-r^2/8)\Big).
        \end{align*}
        \item There exist constants $c_2, C_2\in(0,\infty)$, such that 
        \begin{align*}
            \PP\big(\cE(\eta^\gamma,R,r)^c\big)<C_2\exp(-c_2\gamma\, r^d).
        \end{align*}
    \end{enumerate}
\end{lemma}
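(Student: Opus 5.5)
We reduce the event $\cE(\eta,R,r)$ to the two boundary events of Lemma \ref{lm:BoundaryBounds}. The key geometric fact is a containment: if the boundary of the growth process $\Psi(\eta)$ stays in a bounded height window over a suitable ball, then every paraboloid facet whose projection meets $B_R$ has its apex inside $(\Pi^-_{(\origin,(R+r)^2)})^{\downarrow}$.

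Concretely, we will show that
\[
\cH^{\rm max}(\eta,a,T)\cap\cH^{\rm min}(\eta,a,t)\subset\cE(\eta,R,r)
\]
for the choices $a\approx r/2$ (for instance $a=R+r/2$, or $a=r/2$ in the admissible regime), $T$ of order $r^2/4$, and $t=-r^2/8$.

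The geometric step runs as follows. Let $S=\conv(v_1,\dots,v_{d+1})\in\cL^*(\eta)$ meet $B_R$, and let $x=(w,s)=\apex(\Pi^-(S))$. By the duality between hull and growth processes (cf. \eqref{eq:VertExtermal}), $w$ is a vertex of $\cL(\eta)$. Moreover $s=\min_{(v,h)\in\eta}\pow(w,(v,h))$, because the empty-paraboloid condition says exactly that no point of $\eta$ lies strictly below $\Pi^-(S)$. Hence $(w,s)\in\bd\Psi(\eta)$.

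The condition $x\in(\Pi^-_{(\origin,(R+r)^2)})^{\downarrow}$ reads $s+\|w\|^2\le (R+r)^2$, and we establish it in two cases.

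In the first case, $\|w\|\le a$. Then $\cH^{\rm max}$ gives $s\le T$, so $s+\|w\|^2\le T+a^2\le(R+r)^2$ once $T$ and $a$ are chosen as above, using $r>2\max(1,R)$.

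In the second case, $\|w\|>a$. Since $S\cap B_R\neq\varnothing$, some point $u\in S$ has $\|u\|\le R$. We use $\pow(u,(v_i,h_i))\ge$ the value of $\bd\Psi$ at $u$, and $\cH^{\rm min}$ gives this is $\ge t$. The points $x_i=(v_i,h_i)$ lie on $\Pi^-_x$, so $h_i=s-\|v_i-w\|^2$. Writing $u=\sum\lambda_iv_i$ and averaging,
\[
\sum_i\lambda_i\big(\|u-v_i\|^2+s-\|v_i-w\|^2\big)=s-\|u-w\|^2 .
\]
Hence $s-\|u-w\|^2\ge t$. Together with the upper bound on $\bd\Psi$ at points of $B_a$, this forces the apex height to be compatible with the paraboloid constraint.

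We expect the second case to be the main obstacle, since it relies on a sign argument. Rather than bounding $s$ directly, we will try to show that $\|w\|>a$ is impossible. Take the point $p=u+a'(w-u)/\|w-u\|$ on the segment from $u$ towards $w$, inside $B_a$. Then $\bd\Psi(p)\le T$ by $\cH^{\rm max}$. On the other hand, $\bd\Psi(p)\ge s-\|p-w\|^2$ is false in general, so we instead use that the empty-paraboloid condition gives $\bd\Psi(p)\ge s-\|p-w\|^2$ exactly when $p$ lies in $S$'s region. If this route fails, we fall back on the cleaner inequality $s\le\|u-w\|^2+T'$, obtained by using $\bd\Psi(u)\le T$ and $\bd\Psi(u)\ge s-\|u-w\|^2$. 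From this, $s+\|w\|^2\le T+\|u-w\|^2+\|w\|^2$, which still requires controlling $\|w\|$.

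To control $\|w\|$, we will apply $\cH^{\rm min}$ at $u$ via the paraboloid through $x$. Every point of $\Pi^-_x$ lies above $\bd\Psi$, which lies above $t$ on $B_a$. Combining this with the upper bound $T$ on $\bd\Psi$ at $u$, we get $s-\|u-w\|^2\le T$ and $s-\|q-w\|^2\ge t$ for all $q\in B_a$. Choosing $q$ on the far side of $u$ from $w$ yields $\|q-w\|^2-\|u-w\|^2\ge a\|w-u\|$, so $a\|w-u\|\le T-t$. This gives $\|w\|\lesssim R+(T-t)/a=O(r)$ with explicit constants, which we tune so the bound fits under $(R+r)^2$.

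Finally, we take the union bound
\[
\PP(\cE^c)\le\PP(\cH^{\rm max}(\eta,a,T)^c)+\PP(\cH^{\rm min}(\eta,a,-r^2/8)^c).
\]
For part 1, Lemma \ref{lm:BoundaryBounds} 1(a) with $x_0=1/2$ (which needs $T-4a^2\gtrsim r^2$) gives the term $\exp(-c_1(\frI^1f)(1/2)r^d)$. Lemma \ref{lm:BoundaryBounds} 1(b) gives the terms $(\frI^{d/2+1}f)(-r^2/8)$ and $a^d(\frI^1f)(-r^2/8)$. Rewriting the latter as $R^d(\cdot)$ up to constants is not immediate, since $a^d\sim r^d$. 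To handle this we would apply $\cH^{\rm min}$ only on $B_R$ via the $u$-argument, taking $a=R$ there.

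For part 2, Lemma \ref{lm:BoundaryBounds} 2(a) gives $\exp(-c\gamma r^d)$, and 2(b) shows that the $\cH^{\rm min}$ term vanishes.
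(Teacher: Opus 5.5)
Your plan rests on a single deterministic inclusion $\cH^{\rm max}(\eta,a,T)\cap\cH^{\rm min}(\eta,a,t)\subset\cE(\eta,R,r)$, and this is where the proof fails; tuning constants cannot fix it.

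\textbf{The Case 2 inequalities point the wrong way.} Write $F(p):=\inf_{(v,h)\in\eta}\pow(p,(v,h))$. You correctly note that $s=F(w)$ for the apex $(w,s)$ of $\Pi^-(S)$, and that $\sum_i\lambda_i\pow(u,x_i)=s-\|u-w\|^2$. But this gives $F(u)\le s-\|u-w\|^2$: over $S$ the growth boundary lies \emph{below} $\Pi^-_x$, not above it. More generally,
\[
\sum_i\lambda_i\pow(p,x_i)-(s-\|p-w\|^2)=2\langle p-w,p-u\rangle,
\]
and this is positive exactly for the points $q$ ``on the far side of $u$ from $w$'' that you use. So neither $s-\|u-w\|^2\le T$ nor $s-\|q-w\|^2\ge t$ on $B_a$ follows, and the bound $a\|w-u\|\le T-t$ is unfounded.

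\textbf{The inclusion itself is false where Lemma \ref{lm:BoundaryBounds} applies.} That lemma needs $T>4a^2$ for $\eta_f$ and $T\ge a^2$ for $\eta^\gamma$. Consider the following configuration:
\begin{itemize}
\item all weights equal to $0$;
\item a ball $B(w,D)$ with $\|w\|=D$ huge, whose boundary passes through $\origin$;
\item $d+1$ points on $\bd B(w,D)$ inside $B_R$;
\item no points in $\inter B(w,D)$, and a dense cloud just outside it near $B_{2a}$.
\end{itemize}
Then $\conv(v_1,\dots,v_{d+1})\in\cL^*(\eta)$ meets $B_R$ and has apex $(w,D^2)$, so $\cE$ fails. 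At the same time $F\ge 0>t$ everywhere. Since the sphere passes through $\origin$, also $F(p)\le(\|p\|+o(1))^2\le a^2+o(1)<T$ on $B_a$. Small perturbations of this configuration have positive probability, e.g.\ under $\eta^\gamma$. Far, high apexes are never excluded on a fixed event of this type; they are only improbable.

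Your parameters are also inconsistent: $a\approx r/2$ and $T\approx r^2/4$ violate the condition $T-4a^2\gtrsim r^2$ that you invoke at the end.

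\textbf{What is missing: a multi-scale union bound, as in the paper.} Keep only $\cH^{\rm min}(\eta,R,t)$, with $t=-r^2/8$ (resp.\ $t=0$ for $\eta^\gamma$). Your averaging identity then gives $\|w\|\le R+\sqrt{s-t}$, and on $\cE^c$ this forces $s\ge t+r^2/2$. Then proceed as follows.
\begin{itemize}
\item Split heights into layers $[t+r^2/2+m,\,t+r^2/2+m+1]$, $m\in\NN_0$.
\item Cover the region allowed in layer $m$ by boundedly many cubes of side $\ell(m)=\frac{1}{2\sqrt{2d}}\sqrt{r^2/2+m}$.
\item An apex in such a cube means $F(w)=s\ge t+r^2/2+m$ at some $w$ in the cube.
\item By stationarity, each cube contributes at most $\PP\big(\cH^{\rm max}\big(\eta,\frac{1}{2\sqrt2}\sqrt{r^2/2+m},\,t+r^2/2+m\big)^c\big)$.
\end{itemize}
Now $T-4a^2=t+\frac12(r^2/2+m)>0$, so the hypotheses of Lemma \ref{lm:BoundaryBounds} hold. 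It gives $\exp(-c(r^2/4+m)^{d/2})$, with the factor $(\frI^1 f)(1/2)$ resp.\ $\gamma$, which is summable in $m$ and yields the $\exp(-cr^d)$ term. Because $\cH^{\rm min}$ is used only on $B_R$, part 1(b) of that lemma with $a=R$ gives exactly $(\frI^{\frac d2+1}f)(-r^2/8)+R^d(\frI^1f)(-r^2/8)$. This also resolves the $a^d$ versus $R^d$ problem you flagged.
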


Using the events $\cH^{\rm max}$, $\cH^{\rm min}$ and $\cE$ allows us to prove the following "stabilization" result, which states that under additional constrains (described in terms of $\cH^{\rm max}$, $\cH^{\rm min}$ and $\cE$) the restriction of $\eta$ to a certain region $K_0\subset \RR^{d}\times\RR$ determines $\sk{L}(\eta)$ and $\sk{L}^*(\eta)$ within a ball $B_R$.

\begin{lemma}\label{lm:Stabilisation}
    Given $R, r>0$ and $t\leq 0$ define the region
    \[
    K_0(R,r,t):=\{(v,h)\in \RR^d \times [t,(R+r)^2]: \|v\|\le 2\sqrt{(R+r)^2-t}\}.
    \]  
    Then the following assertions hold
    \begin{enumerate}
        \item On the event $\cE(\eta,R,r)\cap \cH^{\rm min}(\eta,2\sqrt{(R+r)^2-t},t)$ we almost surely have that  
        \begin{align*}
            \bigcup_{S\in \cL^*(\eta), S\cap B_R\neq \varnothing} \bd S = \bigcup_{S\in \cL^*(\eta\cap K_0(R,r,t)), S\cap B_R\neq \varnothing} \bd S.
        \end{align*}
        \item On the event $\cH^{\rm max}(\eta,R,(R+r)^2)\cap \cH^{\rm min}(\eta,2\sqrt{(R+r)^2-t},t)$ we almost surely have that 
        \begin{align*}
                    \sk{L}(\eta)\cap B_R=\sk{L}(\eta\cap K_0(R,r,t))\cap B_R.
        \end{align*}
    \end{enumerate}
\end{lemma}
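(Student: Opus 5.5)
Write $\rho:=2\sqrt{(R+r)^2-t}$ and $K_0:=K_0(R,r,t)$, and compare the growth process $\Psi$ and the hull process $\Phi$ of $\eta$ with those of $\eta':=\eta\cap K_0$. Both parts rest on one localization principle: every point of $\eta$ that influences the configuration over $B_R$ must lie in $K_0$, and every point of $\eta\setminus K_0$ is dominated there. The lower height bound $h\ge t$ comes from $\cH^{\rm min}$, the upper bound $h\le (R+r)^2$ from $\cH^{\rm max}$ or from $\cE$, and the spatial bound $\|v\|\le\rho$ from combining the two.

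\emph{Part 2.} I would show that for every $w\in B_R$ the minimizer of $\pow(w,\cdot)$ over $\eta$ lies in $K_0$. Then the lower envelopes $\bd\Psi(\eta)$ and $\bd\Psi(\eta')$ coincide over $B_R$, so by \eqref{eq:ParaboloidGrowthBoundary} the Laguerre cells of the two processes restricted to $B_R$ agree. In particular the skeletons agree in $B_R$, since a point of $B_R$ lies on the skeleton iff the minimum is attained at least twice.

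Fix $w\in B_R$ and a minimizer $(v,h)$. By $\cH^{\rm max}(\eta,R,(R+r)^2)$ we have $\|w-v\|^2+h\le (R+r)^2$, so $h\le (R+r)^2$. By $\cH^{\rm min}(\eta,\rho,t)$ we have $\pow(z,(v,h))\ge t$ for all $z\in B_\rho$.

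If $h<t$, take $z=v$ if $\|v\|\le\rho$. Otherwise take $z=\rho v/\|v\|$, and note $\|z-v\|\le\|w-v\|$ because $\|w\|\le R<\rho$. In either case $\pow(z,(v,h))\le (R+r)^2$. This by itself gives no contradiction, so I will instead derive $h\ge t$ directly: for $z=v$ the event gives $h\ge t$ whenever $\|v\|\le\rho$. It remains to exclude $\|v\|>\rho$.

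From $\|w-v\|^2\le (R+r)^2-h$ and $\|w\|\le R$ we get $\|v\|\le R+\sqrt{(R+r)^2-h}$. If $h\ge t$ this is at most $R+\tfrac{\rho}{2}\le\rho$, since $\rho/2\ge R+r>R$.

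If instead $h<t$, the point $z$ on the segment $[w,v]$ at distance $\rho$ from the origin satisfies $\pow(z,(v,h))\le\pow(w,(v,h))\le(R+r)^2$. This still does not contradict $\ge t$, so here I use the quadratic structure. The function $\pow(\cdot,(v,h))$ along the ray from $v$ through $w$ is a parabola with minimum value $h$ at $v$. The event $\cH^{\rm min}$ forces $\pow\ge t$ on all of $B_\rho$, so the ball $\{\pow<t\}$ of radius $\sqrt{t-h}$ around $v$ avoids $B_\rho$, giving $\|v\|\ge\rho+\sqrt{t-h}$. Hence $\|w-v\|\ge\rho-R+\sqrt{t-h}$, and then
\[
\pow(w,(v,h))\ge(\rho-R)^2+h\ge \tfrac{\rho^2}{4}+t+\bigl(\rho^2/4 -(t-h)\bigr)\ldots
\]
I expect this bookkeeping to be the main obstacle. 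I would handle it with the elementary inequality $(\rho-R+s)^2-s^2\ge(\rho-R)^2\ge\rho^2/4=(R+r)^2-t$, where $s=\sqrt{t-h}$. This yields $\pow(w,(v,h))\ge(R+r)^2-t+t+\ldots$, i.e. $\ge(R+r)^2$, with strict inequality almost surely, contradicting $\cH^{\rm max}$. Hence $h\ge t$ and $\|v\|\le\rho$, so $(v,h)\in K_0$.

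\emph{Part 1.} On $\cE$, every simplex $S\in\cL^*(\eta)$ hitting $B_R$ has its paraboloid apex $(a,s)$ with $s\le (R+r)^2-\|a\|^2$. Its vertices lie on $\Pi^-_{(a,s)}$ and the simplex meets $B_R$.

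I would first show that the vertices $x_i=(v_i,h_i)$ lie in $K_0$. The bound $h_i\le s\le (R+r)^2$ is immediate. For the lower height bound, the hypograph of the empty paraboloid contains no points of $\eta$. Each vertex is also a minimizer of $\pow(\cdot)$ at the point $a$, since $\Pi^-_{(a,s)}$ and $\Pi^+_{x_i}$ are dual: $\pow(a,x_i)=s$ and $\pow(a,y)\ge s$ for all $y\in\eta$. Applying the Part 2 argument at the point $a$ gives $h_i\ge t$ and $\|v_i\|\le\rho$. This uses $\|a\|\le R+r\le \rho/2$, together with $\pow(a,\cdot)=s\le (R+r)^2$ playing the role of $\cH^{\rm max}$ with $a$ replacing $w$.

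Consequently each such $S$ is also a simplex of $\cL^*(\eta')$, because the emptiness of the hypograph only becomes easier after removing points. Conversely, let $S'\in\cL^*(\eta')$ meet $B_R$, with apex $(a,s)$. The points of $\eta\setminus K_0$ cannot enter its hypograph: any such point $y$ would satisfy $\pow(a,y)<s$. The localization argument then shows that the minimizer of $\pow(a,\cdot)$ over $\eta$ lies in $K_0$, provided we know $s\le (R+r)^2$ and $\|a\|\le R+r$.

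For $S'$ I would obtain this by a covering argument. The simplices of $\cL^*(\eta)$ meeting $B_R$, already shown to belong to $\cL^*(\eta')$, cover $B_R$. Since simplices of a Delaunay-type complex have disjoint interiors, $S'$ must coincide with one of them. So the two unions of $\bd S$ agree, and almost-sure general position handles ties.
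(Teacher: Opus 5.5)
Your Part 2 is correct and follows essentially the paper's argument. The paper localizes the points whose upward paraboloids reach below height $(R+r)^2$ over $B_R$; you localize the minimizers of $\pow(w,\cdot)$ for $w\in B_R$. Your final inequality does close the case $h<t$, and strictly, since $\sqrt{t-h}>0$ and $\rho>R$.

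Your covering argument for the reverse inclusion in Part 1 is also sound, and it supplies something the paper leaves implicit. Simplices of $\cL^*(\eta\cap K_0)$ have disjoint interiors: under $(v,h)\mapsto(v,h+\|v\|^2)$ they are projections of lower facets of a convex hull. A simplex meeting $B_R$ only in its boundary is a null event.

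\textbf{The gap.} It is in the forward direction of Part 1, at ``This uses $\|a\|\le R+r\le\rho/2$''.

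\emph{Why the stated bound fails.} The event $\cE$ only gives $\|a\|^2\le (R+r)^2-s$. This yields $\|a\|\le R+r$ only when the apex height $s$ is nonnegative. Nothing forces that: $t\le 0$, and the lemma is applied with $t=-\frac54(R+r)^2$ to densities supported on all of $\RR$.

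\emph{What you actually need, and why your inputs cannot give it.} Your localization at $a$ really needs $\|a\|\le\rho/2$, which given $\cE$ is equivalent to $s\ge t$. This does not follow from the ingredients you use: $\pow(a,\cdot)\ge s$ on $\eta$, $\|a\|^2+s\le (R+r)^2$, and $\cH^{\rm min}$. For example, take $\|a\|=2\rho$ and $x=(50a,\,t-(99\rho)^2)$. Then:
\begin{itemize}
\item $\pow(z,x)\ge t$ for all $z\in B_\rho$;
\item $\pow(a,x)=t-197\rho^2$;
\item $\|a\|^2+\pow(a,x)<(R+r)^2$;
\item yet $x\notin K_0$.
\end{itemize}
So $x$ can be a minimizer of $\pow(a,\cdot)$ compatible with everything you invoke. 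The hypothesis $S\cap B_R\neq\varnothing$, which you use only to trigger $\cE$, has to enter a second time.

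\textbf{The repair.} Take $w'\in S\cap B_R$ and write $w'=\sum_i\lambda_i v_i$ as a convex combination of the vertices. Using $h_i=s-\|v_i-a\|^2$ one checks that $\sum_i\lambda_i\pow(w',x_i)=s-\|w'-a\|^2\le s$. So some vertex has $\pow(w',x_i)\le s$, and $\cH^{\rm min}$ at $w'\in B_R\subset B_\rho$ gives $s\ge t$. Hence $\|a\|^2\le (R+r)^2-t=\rho^2/4$, and your Part 2 argument at $a$ then goes through verbatim.

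This is the fact the paper relies on to have $q\ge t$ before bounding the vertices. In the proof of Lemma~\ref{lm:StabRadius} it appears as the statement that $\bd\Phi(\eta)$ lies above $\bd\Psi(\eta)$, hence above height $t$ over $B_R$.
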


\section{Proof of Theorem \ref{thm:StrongProbConvLaguerre} and Theorem \ref{thm:ConvergenceTypicalCell}}\label{sec:ProofStrongProbConvLaguerre}

In what follows we will need an auxiliary lemma about the properties of the sequence of functions $(f_n)_{n\in\NN}$ satisfying \hyperref[item:C1]{(C1)}.

\begin{lemma}\label{lm:C1properties}
    Let $(f_n)_{n\in\NN}$ be a sequence of functions satisfying \hyperref[item:C1]{(C1)}, then
    \begin{itemize}
        \item[1.] $\int_{-\infty}^{x_0}|h|^{d/2+\delta}f(h)\dd h<\infty$;
        \item[2.] For any positive sequence $(x_n)_{n\in\NN}$ with $x_n\to\infty$ as $n\to\infty$ it holds that
        \begin{align*}
            &\lim_{n\to\infty}x_n^{d/2}(\frI^1 f_n)(-x_n)=0,&\qquad &\lim_{n\to\infty}(\frI^{{d\over 2}+1} f_n)(-x_n)=0,\\
&\lim_{n\to\infty}x_n^{d/2}(\frI^1 f)(-x_n)=0,&\qquad &\lim_{n\to\infty}(\frI^{{d\over 2}+1} f)(-x_n)=0.
        \end{align*}
    \end{itemize}
\end{lemma}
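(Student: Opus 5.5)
Part 1 follows from Fatou's lemma. By (C1)(ii), $\int_{-\infty}^{x}|f_n-f|\,\dd h\to0$ for every $x\in E$. Hence there is a subsequence with $f_{n_k}\to f$ almost everywhere on $(-\infty,x_0]$; if $x_0\notin E$, then $f$ vanishes beyond $\sup E$ and nothing changes. Fatou's lemma then gives
\[
\int_{-\infty}^{x_0}|h|^{d/2+\delta}f(h)\,\dd h\le\liminf_{k\to\infty}\int_{-\infty}^{x_0}|h|^{d/2+\delta}f_{n_k}(h)\,\dd h\le M<\infty .
\]

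For part 2 I use one Chebyshev-type tail estimate that is uniform over the family $\{f\}\cup\{f_n\}$. Let $g$ be any member of the family and let $x>\max(0,-x_0)$, so that $-x<x_0$. For $h\le -x$ we have $|h|\ge x$, and therefore
\[
x^{d/2}(\frI^1 g)(-x)=x^{d/2}\int_{-\infty}^{-x}g(h)\,\dd h\le x^{-\delta}\int_{-\infty}^{-x}|h|^{d/2+\delta}g(h)\,\dd h\le M' x^{-\delta},
\]
where $M'=M$ for $g=f_n$ and $M'=\int_{-\infty}^{x_0}|h|^{d/2+\delta}f\,\dd h$ for $g=f$, which is finite by part 1. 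Substituting $x=x_n\to\infty$ shows the two first-order limits tend to $0$, uniformly in $n$.

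For the fractional integral of order $d/2+1$ I write
\[
(\frI^{d/2+1}g)(-x)=\frac{1}{\Gamma(d/2+1)}\int_{-\infty}^{-x}g(t)\,(-x-t)^{d/2}\,\dd t .
\]
For $t\le -x<0$ we have $0\le -x-t\le |t|$. Hence
\[
(-x-t)^{d/2}\le |t|^{d/2}=|t|^{d/2+\delta}|t|^{-\delta}\le x^{-\delta}|t|^{d/2+\delta}.
\]
This gives $(\frI^{d/2+1}g)(-x)\le M'x^{-\delta}/\Gamma(d/2+1)$, and the last two limits follow along $x_n\to\infty$. The positivity of $\delta$ is exactly what turns boundedness into decay. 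The only point that needs care is the Fatou step in part 1, where one must check that $L^1$ convergence on $(-\infty,x_0]$ is available; if it is not, one uses convergence on $(-\infty,x]$ for $x\in E$ close to $x_0$, or notes that $f=0$ outside $E$.
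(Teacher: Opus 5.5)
Your proof is correct. Part 2 is the paper's own argument. You restrict to $h\le -x_n$ and use $|h|\ge x_n$, together with $0\le -x_n-h\le |h|$ for the fractional integral of order $\frac d2+1$. This converts the weight into $x_n^{-\delta}|h|^{d/2+\delta}$ and gives the bound $M x_n^{-\delta}$, with part 1 supplying the finite constant for $f$.

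Part 1 differs slightly. You extract an a.e.\ convergent subsequence and apply Fatou's lemma. The paper instead truncates to $h\in[-k,x_0]$, where the weight $|h|^{d/2+\delta}$ is bounded. There the $L^1$ convergence in (C1)(ii) directly gives $\int |h|^{d/2+\delta}\ind\{h\in[-k,x_0]\} f\,\dd h=\lim_{n} \int |h|^{d/2+\delta}\ind\{h\in[-k,x_0]\}f_n\,\dd h\le M$, and monotone convergence in $k$ finishes. Both are standard lower-semicontinuity arguments. The paper's avoids passing to a subsequence and actually yields convergence of the truncated moments; yours is shorter.

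Your remark about $x_0\notin E$ addresses something the paper passes over, since it applies (C1)(ii) at $x=x_0$ without comment. Your suggestion of using $x\in E$ ``close to $x_0$'' does not literally work when $x_0$ lies strictly beyond the right endpoint $b$ of $E=(-\infty,b)$. The clean version is to apply Fatou on $(-\infty,x]$ for each $x\in E$ with $x\le x_0$, then let $x\uparrow b$ by monotone convergence, using $f=0$ outside $E$. The other case, $x_0$ below the left endpoint of $E$, is trivial for the same reason.
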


The proof of this lemma uses standard truncation arguments and the monotone convergence theorem and is, thus, postponed to Section \ref{sec:proofs}.

\begin{proof}[Proof of Theorem \ref{thm:StrongProbConvLaguerre}]
    Throughout the proof $C$, $\tilde C$, $c$, $\tilde c$ will denote some positive constants, which only depend on $d$.  Their exact values might be different from line to line. To simplify the representation, in what follows we set $\eta_0:=\eta$,  $\widetilde{\eta}_0:=\widetilde{\eta}$ and $f_0:=f$. In order to prove the statement we need to find a suitable coupling of the sequence of Poisson point processes $(\widetilde{\eta}_n)_{n\in\NN_0}$, allowing us to gain control over the restriction of the corresponding (dual) Laguerre tessellations to the ball $B_R$ for any $R>0$. Thus, for given $n\in\NN_0$ and $R>0$ our first step is to estimate the probability that the restriction of $\eta_n$ to a certain region $K_1\subset \RR^{d}\times\RR$ determines $\sk{L}(\eta_n)\cap B_R$ and $\sk{L}^*(\eta_n)\cap B_R$. This region will be later used to construct $\widetilde{\eta}_n$.
    
    \medskip
    
    \textbf{Preparations:} Let $R, r>0$ and for $n\in\NN_0$ define the auxiliary events (see \eqref{eq:Eevent} and \eqref{eq:Hmin})
    \begin{align*}
            \cE_n&:=\cE({\eta}_n, R,r),\,&\cH^{{\rm min}}_{n}&:=\cH^{{\rm min}}({\eta}_n,3(R+r),-{5/4}(R+r)^2),\, &\cH^{\rm max}_{n}:=\cH^{{\rm max}}({\eta}_n,R,(R+r)^2),
    \end{align*}
    allowing us to control the configuration of $\sk{L}(\eta_n)$ and $\sk{L}^*(\eta_n)$ within the ball $B_R$. More precisely let 
    \[
        K_1(R,r):=\{(v,h)\in \RR^d \times [-5/4(R+r)^2,(R+r)^2]: \|v\|\le 3(R+r)\}.
    \]
    By Lemma \ref{lm:Stabilisation} with $t=-5/4(R+r)^2$ we get that $K_0(R,r,t)= K_1(R,r)$ and, hence,
    on $\cE_n\cap \cH^{{\rm min}}_{n}$ it holds that
    \begin{equation}\label{eq:29_12_25_eq1}
        \bigcup_{S\in \cL^*(\eta_n), S\cap B_R\neq \varnothing} \bd S = \bigcup_{S\in \cL^*(\eta_n\cap K_1(R,r)), S\cap B_R\neq \varnothing} \bd S,
    \end{equation}
    and on $\cH_{n}^{\rm max}\cap \cH_n^{\rm min}$ we have that 
    \begin{equation}\label{eq:02_01_26_eq2}
        \sk{L}(\eta_n)\cap B_R=\sk{L}(\eta_n\cap K_1(R,r))\cap B_R.
    \end{equation}

    Further note that if $(f_n)_{n\in\NN}$ satisfies \hyperref[item:C1]{(C1)} then $c:=(\frI^1 f_0)(1/2)>0$ and due to \hyperref[item:C1]{(C1)} we also have $\lim_{n\to \infty}(\frI^1f_{n})(1/2)=c$. Hence, there exits $n_0\in \NN$ such that $|(\frI^1f_{n})(1/2)-c\big| < c/2$ for all $n\ge n_0$. Then by Lemma \ref{lm:BoundaryBounds} (noting that by monotonicity of the fractional integrals we have $\frI^\alpha f_n(-5/4(R+r)^2)\le \frI^\alpha f_n(-r^2/8)$, for $\alpha=1,\frac{d}{2}+1$ and $R>0$) and Lemma \ref{lm:StabRadius} we conclude for all $n\ge n_0$ and $n=0$ that
    \begin{align}
        \PP((\cE_n\cap \cH^{{\rm min}}_{n})^c)&\leq \PP(\cE_n^c)+\PP((\cH^{{\rm min}}_{n})^c)\notag\\
        &\leq C \Big(\exp(-cr^d) + \big(\frI^{\frac{d}{2}+1}f_n\big)(-r^2/8)+(r+R)^d(\frI^1 f_n)(-r^2/8)\Big).\label{eq:29_12_25_eq2}
    \end{align}
    In the same way, using Lemma \ref{lm:BoundaryBounds} with $x_0=1/2$ and $r\ge 2\max(R,1)$ for $n\ge n_0$ and $n=0$ we obtain
    \begin{align}
        \PP((\cH^{{\rm max}}_{n}\cap \cH^{\rm min}_n)^c)&\leq C \Big(\exp(-cr^d) + \big(\frI^{\frac{d}{2}+1}f_n\big)(-r^2/8)+(r+R)^d(\frI^1 f_n)(-r^2/8)\Big).\label{eq:29_12_25_eq2_2}
    \end{align}
   \textbf{Construction of the point processes:} We are now ready to construct point processes $(\widetilde{\eta}_n)_{n\in\NN_0}$, which are defined on the same probability space and such that $\widetilde{\eta}_n\overset{d}{=}\eta_n$ for all $n\in\NN_0$. Let $(r_n)_{n\in\NN}$ be a monotone increasing sequence (to be specified later) such that $r_n\to\infty$ as $n\to\infty$. Let $\widetilde{\eta}_0$ be a Poisson point process with $\widetilde{\eta}_0\overset{d}{=}\eta_0$ and let
    \[
        \widetilde{\eta}_n=\xi_n+\hat\xi_n\overset{d}{=}\eta_n,\qquad n\in\NN,
    \]     
    where $\xi_n$ and $\hat\xi_n$ are independent, $\xi_n$ is a Poisson point process (on $K_1(r_n/2,r_n)$)  with intensity measure having density $(v,h)\mapsto f_n(h){\bf 1}_{K_1(r_n/2,r_n)}(v,h)$ and $\hat\xi_n$ is a Poisson point process (on $K_1(r_n/2,r_n)^c$) with intensity measure having density $(v,h)\mapsto f_n(h){\bf 1}_{K_1(r_n/2,r_n)^c}(v,h)$. Further we assume, that $\hat\xi_n$ is independent of $\widetilde{\eta}_0$ for all $n\in\NN$, while $\xi_n$, $n\in\NN$, is constructed in such a way that the random elements $\xi_n$ and $\widetilde{\eta}_0\cap K_1(r_n/2,r_n)$ coincide maximally (see \cite[Theorem 7.3, p.107]{thorisson2000coupling}). This leads to the desired construction.

    \medskip
    {\textbf{Estimate for the (dual) Laguerre tessellations:}} It remains to ensure that for any $R>0$ we have that 
    \begin{equation}\label{eq:30_12_25_eq1}
    \lim_{n\to\infty}\PP(\sk{L}^*(\widetilde{\eta}_n)\cap B_R\neq \sk{L}^*(\widetilde{\eta}_0)\cap B_R)=0,
    \end{equation}
    and 
    \begin{equation}\label{eq:02_01_26_eq3}
        \lim_{n\to\infty}\PP(\sk{L}(\widetilde{\eta}_n)\cap B_R\neq \sk{L}(\widetilde{\eta}_0)\cap B_R)=0.
    \end{equation}
    Let $n_1(R)>0$ be such that $r_n\ge 2\max(R,1)$ for any $n>n_1$. By the construction of $(\widetilde{\eta}_n)_{n\in\NN_0}$ and by \eqref{eq:29_12_25_eq1} we have on $\cH^{{\rm min}}_{n}\cap\cE_n\cap \cH^{\rm min}_0\cap \cE_0$ for any $n\ge n_1$ that
    \begin{align*}
    \PP(\{\sk{L}^*(\widetilde{\eta}_n)&\cap B_R\neq \sk{L}^*(\widetilde{\eta}_0)\cap B_R\}\cap \cH^{{\rm min}}_{n}\cap\cE_n\cap \cH_0^{\rm min}\cap \cE_0)\\
    &=\PP(\{\sk{L}^*(\xi_n)\cap B_R\neq \sk{L}^*(\widetilde{\eta}_0\cap K_1(r_n/2,r_n))\cap B_R\}\cap \cH^{{\rm min}}_{n}\cap\cE_n\cap \cH_0^{\rm min}\cap \cE_0),
    \end{align*}
    since $K_1(R,r_n)\subset K_1(r_n/2,r_n)$ and $\widetilde{\eta}_n\cap K_1(r_n/2,r_n)=\xi_n$ almost surely. By the law of total probability we get
    \begin{equation}\label{eq:29.04.25_1}
        \begin{aligned}
            \PP(\sk{L}^*&(\widetilde{\eta}_n)\cap B_R\neq \sk{L}^*(\widetilde{\eta}_0)\cap B_R)\\
            &\leq\PP\big(\sk{L}^*(\xi_n)\cap B_R\neq \sk{L}^*(\widetilde{\eta}_0\cap K_1(r_n/2,r_n))\cap B_R\big)+\PP((\cH^{{\rm min}}_{n}\cap\cE_n\cap\cH_0^{\rm min}\cap\cE_0)^c).  
        \end{aligned}
    \end{equation}
    Further note, that if $\xi_n=\widetilde{\eta}_0\cap K_1(r_n/2,r_n)$, then $\sk{L}^*(\xi_n)\cap B_R= \sk{L}^*(\widetilde{\eta}_0\cap K_1(r_n/2,r_n))\cap B_R$. Hence, again applying the law of total probability we obtain
    \begin{equation}\label{eq:29_12_25_eq4}
        \PP\big(\sk{L}^*(\xi_n)\cap B_R\neq \sk{L}^*(\widetilde{\eta}_0\cap K_1(r_n/2,r_n))\cap B_R\big)\le \PP\big(\xi_n\neq(\widetilde{\eta}_0\cap K_1(r_n/2,r_n))\big).
    \end{equation}
    By construction, $\xi_n$ and $\widetilde{\eta}_0\cap K_1(r_n/2,r_n)$ coincide maximally and, hence, by \cite[Equation (8.18), p.112]{thorisson2000coupling} and \cite[Theorem 3.2.2]{reiss1993course} we have
     \begin{equation}\label{eq:29.04.25_2}
        \begin{aligned}
            \PP\big(\xi_n\neq(\widetilde{\eta}_0\cap K_1(r_n/2,r_n))\big)&\le \frac{3}{2} \int_{{-5r_n^2}}^{4r_n^2} \int_{\|v\|\le 6r_n} |f_0(h)-f_n(h)| \dd v \dd h\\
            &\le C r_n^d \int_{-\infty}^{4 r_n^2} |f_0(h)-f_n(h)|\dd h.
        \end{aligned}
    \end{equation}
    Combining \eqref{eq:29.04.25_1} with \eqref{eq:29_12_25_eq2}, \eqref{eq:29_12_25_eq4} and \eqref{eq:29.04.25_2} for any $n\ge \max(n_0,n_1)$ yields
    \[
    \begin{aligned}
        \PP(\sk{L}^*(\widetilde{\eta}_n)\cap B_R\neq \sk{L}^*(\widetilde{\eta})\cap B_R)&\leq C\Big(\exp(-cr_n^d)+ r_n^d \int_{-\infty}^{4 r_n^2} |f_n(h)-f_0(h)|\dd h+\big(\frI^{\frac{d}{2}+1}f_n\big)(-r_n^2/8)\\
        &\qquad\quad+\big(\frI^{\frac{d}{2}+1}f_0\big)(-r_n^2/8)+r_n^d(\frI^1 f_n)(-r_n^2/8)+r_n^d(\frI^1 f_0)(-r_n^2/8)\Big).
    \end{aligned}
    \]
        
    \medskip

    \textbf{Choosing the sequence $(r_n)_{n\in\NN}$:} We now show that as $n\to\infty$ the right hand side tends to $0$ for a suitable choice of $(r_n)_{n\in\NN}$, implying \eqref{eq:30_12_25_eq1}. Due to \hyperref[item:C1]{(C1)}-(ii) there is a (strictly) increasing sequence $(N_k)_{k\in\NN}$ of natural numbers, such that for any $n\ge N_k$ we have that
    $
    \int_{-\infty}^k|f_n(h)-f_0(h)|\dd h\leq 2^{-k}k^{-d/2}.
    $
    Then we define 
    \begin{equation}\label{eq:rn}
        r_n=\sqrt{k}/2\text{ for }N_k\leq n<N_{k+1},\quad r_n=1/2\text{ for }1\leq n< N_k.
    \end{equation} 
    It is clear that $(r_n)_{n\in\NN}$ is monotone increasing and $r_n\to\infty$ as $n\to\infty$. With this choice we have
    \begin{align}\label{eq:30_12_25_eq2}
        r_n^d \int_{-\infty}^{4 r_n^2} |f_n(h)-f_0(h)|\dd h&=2^{-d}k^{d/2} \int_{-\infty}^{k} |f_n(h)-f_0(h)|\dd h\leq 2^{-k-d}\to 0,
    \end{align}
    as $n\to\infty$ (implying $k\to\infty$).  Further, since $r_n\to\infty$, we immediately get that $\exp(-c r_n^d)\to 0$ as $n\to\infty$, and by choosing $x_n=r_n^2/8$ Lemma \ref{lm:C1properties} implies the convergence to $0$ of the remaining terms. Combining these with \eqref{eq:30_12_25_eq2} leads to \eqref{eq:30_12_25_eq1} and, thus, finishes the proof of (ii).

    Similarly, by \eqref{eq:02_01_26_eq2}, the construction of $(\widetilde\eta_n)_{n\in\NN_0}$ with $(r_n)_{n\in\NN}$ defined as in \eqref{eq:rn} and by the law of total probability we get
    \begin{align*}
        \PP(\sk{L}&(\widetilde{\eta}_n)\cap B_R\neq \sk{L}(\widetilde{\eta})\cap B_R)\\
        &\leq\PP\big(\sk{L}(\xi_n)\cap B_R\neq \sk{L}(\widetilde{\eta}_0\cap K_1(r_n/2,r_n))\cap B_R\big)+\PP((\cH^{{\rm min}}_{n}\cap\cH_n^{\rm max}\cap\cH_0^{\rm min}\cap\cH_0^{\rm max})^c)\\
        &\leq \PP\big(\xi_n\neq(\widetilde{\eta}_0\cap K_1(r_n/2,r_n))\big)+\PP((\cH^{{\rm min}}_{n}\cap\cH_n^{\rm max}\cap\cH_0^{\rm min}\cap\cH_0^{\rm max})^c).
    \end{align*}
    Then the combination of \eqref{eq:29.04.25_2} and \eqref{eq:29_12_25_eq2_2} implies for any $n\ge \max(n_0,n_1)$ that
    \[
    \begin{aligned}
            \PP(\sk{L}(\widetilde{\eta}_n)\cap B_R\neq \sk{L}(\widetilde{\eta})\cap B_R)&\leq C\Big(\exp(-cr_n^d)+ r_n^d \int_{-\infty}^{4 r_n^2} |f_n(h)-f_0(h)|\dd h+\big(\frI^{\frac{d}{2}+1}f_n\big)(-r_n^2/8)\\
            &\qquad\quad+\big(\frI^{\frac{d}{2}+1}f_0\big)(-r_n^2/8)+r_n^d(\frI^1 f_n)(-r_n^2/8)+r_n^d(\frI^1 f_0)(-r_n^2/8)\Big),
    \end{aligned}
    \]
    which converges to $0$ as $n\to\infty$ by the arguments given above. Hence, \eqref{eq:02_01_26_eq3} follows and the proof of (i) is finished.
    \end{proof}

    \begin{proof}[Proof of Theorem \ref{thm:ConvergenceTypicalCell}] 
    Let $c,C,\widetilde{C}>0$ be constants depending only on $d$. Their values might differ from line to line. Let $(\eta_n)_{n\in \NN}$ be a sequence of Poisson point processes with densities $(f_n)_{n\in \NN}$ satisfying \hyperref[item:C1]{(C1)}. 
    By Theorem \ref{thm:StrongProbConvLaguerre} we have that there exist point processes ${\widetilde\eta}$ and $({\widetilde\eta}_n)_{n\in\NN}$ defined on a common probability space with $\widetilde\eta\overset{d}{=}\eta$ and ${\widetilde\eta}_n\overset{d}{=}\eta_n$ for all $n\in \NN$, such that $\sk{L}(\widetilde\eta_n)$ converges locally with high probability to $\sk{L}(\widetilde\eta)$ as $n\to\infty$. Note that $\PP_{\cL(\eta_n)}^0=\PP_{\cL(\widetilde{\eta}_n)}^0$. Using Proposition \ref{prop:TypicalCellConvergence} it is sufficient to show that 
    \[
        \lim_{n\to \infty} \gamma_d(\cL(\widetilde\eta_n))=\gamma_d(\cL(\widetilde\eta)).
    \]
    By \cite[Theorem 10.2.8]{SW} we have $\gamma_0(\cL^*(\widetilde\eta_n))=\gamma_d(\cL(\widetilde\eta_n))$ for any $n\in \NN$ and $\gamma_0(\cL^*(\widetilde\eta))=\gamma_d(\cL(\widetilde\eta))$.  Let $V_n:=\#(\cF_0(\cL^*(\widetilde{\eta}_n))\cap [0,1]^d)$, then showing that there exists some $p>1$ such that \[\sup_{n\in\NN}\EE\big[V_n^p\big]<\infty\] and applying Lemma \ref{lem:convCellIntensities} yields the convergence of cell intensities.
    In particular it is sufficient to show that there exists ${n_0}\in \NN$ such that 
    \begin{align}\label{eq:supremum}
        \sup_{n\ge {n_0}}\EE\big[V_n^2\big]<\infty.    
    \end{align}
    
    We first show that the claim holds given the conditions (i).
    For $k\in \NN$ and $n\in\NN_0$ we define, as in \eqref{eq:Hmin}, the auxiliary event
    \begin{align*}
            \cH_n^{\textrm{max}}(k):=\cH^{{\rm max}}({\eta}_n,\sqrt{d},k).
    \end{align*}
    First we note that $[0,1]^d\subset B_{\sqrt{d}}$. Hence, given the event $\cH_n^{\textrm{max}}(k)$ we have that every vertex of $\cL^*(\widetilde{\eta}_n)$ within $[0,1]^d$ is a point of $\widetilde{\eta}_n$ within the set $B_{\sqrt{d}}\times [-\infty,k]$, 
    and we conclude
    \begin{align*}
        V_n \ind\{\cH_n^{\textrm{max}}(k)\}\le \#\big(\cF_0(\cL^*(\widetilde{\eta}_n))\cap B_{\sqrt{d}}\big) \ind\{\cH_n^{\textrm{max}}(k)\}\le \widetilde{\eta}_n\big(B_{\sqrt{d}}\times [-\infty,k]\big)=:N_n(k)
    \end{align*}
    almost surely. Note that $\cH_n^{\textrm{max}}(k)\uparrow \Omega$ as $k\to \infty$. Applying the bound above and the Cauchy-Schwarz inequality we have
    \begin{align*}
        \EE[V_n^2]&=\EE[V_n^2\ind\{\cH_n^{\textrm{max}}(1)\}]+\sum_{k\ge 1} \EE[V_n^2\ind\{\cH_n^{\textrm{max}}(k+1))\setminus \cH_n^{\textrm{max}}(k)\}]\\
        &\le \EE[N_n(1)^2] + \sum_{k\ge 1} \EE[N_n(k+1)^4]^{1/2} \PP[\cH_n^{\textrm{max}}(k)^c]^{1/2}.
    \end{align*}    
    By Lemma \ref{lm:BoundaryBounds} we have
    \begin{align*}
        \PP[\cH_n^{\textrm{max}}(k)^c]&\leq \exp\big(-c(\frI^{\frac{d}{2}+1}f_n)(k-4d)\big)\le \exp\big(-c(\frI^{1}f_n)(k-4d-1)\big),
    \end{align*}
    for all $k> 4d$. Noting that $N_n(k)$ is Poisson distributed with parameter $C(\frI^1 f_n)(k)$, we have
    \begin{align*}
        \EE[N_n(k)^4]\le \widetilde{C}\big((\frI^1f_n)(k)\big)^4,
    \end{align*}
    and hence
        \begin{align*}
        \EE[V_n^2]&\le C \Big(\sum_{k=1}^{4d} ((\frI^1f_n)(k))^2 + \sum_{k\ge 4d+1} ((\frI^1f_n)(k+1))^2 \exp\big(-c(\frI^{1}f_n)(k-4d-1)\big)\Big)\\
        &= C \Big(\sum_{k=1}^{4d} ((\frI^1f_n)(k))^2 + \sum_{k\ge 0} ((\frI^1f_n)(k+4d+2))^2 \exp\big(-c(\frI^{1}f_n)(k)\big)\Big)
    \end{align*}
    
    By \hyperref[item:C1]{(C1)}-(ii) we have that there exists $n_1\in \NN$ such that for $n\ge n_1$ and all $k=1,\ldots,4d$ it holds
    \[
        (\frI^1 f_n)(k) \le (\frI^1 f_n)(4d) \le \frac{(\frI^1 f)(4d)}{2}\in (0,\infty),
    \]
    where $f\in L_{\rm loc}^{1,+}(E)$ is an admissible function (implying $(\frI^1 f)(4d)<\infty$ by \cite[Lemma 2.2]{gusakova2024PLT}). Hence, the first sum can be bounded by a constant for all $n\ge n_1$. By the same arguments it is sufficient to consider the second sum starting from $k_0:=\lceil x_0 \rceil$ (where $x_0$ is chosen as in the assumptions).  
    Note that $g(y):=y^2\exp(-(c/2)y)$ has a global maximum at $y=4/c$ on $[0,\infty)$ and hence 
    \[
        y^2\exp(-cy)=y^2\exp\big(-\frac{c}{2}y\big)\exp\big(-\frac{c}{2}y\big)\le \frac{16}{c^2 \eee^2} \exp\big(-\frac{c}{2}y\big),
    \]
    for all $y\ge 0$.
    First applying $(\frI^1f_n)(x+4d+2)\le C (\frI^1 f_n)(x)$ and then $(\frI^1f_n)(x)\ge \alpha x^\varepsilon$ yields
    \begin{align*}
        \sum_{k\ge k_0} ((\frI^1f_n)(k+4d+2))^2 \exp\big(-c(\frI^{1}f_n)(k)\big)&\le C \sum_{k\ge k_0} ((\frI^1f_n)(k))^2 \exp\big(-c(\frI^{1}f_n)(k)\big)
        \\
        &\le \frac{16C}{(c\eee)^2} \sum_{k\ge k_0} \exp\Big(-\frac{c}{2} (\frI^{1}f_n)(k) \Big)\\
        &\le \frac{16C}{(c\eee)^2} \sum_{k\ge k_0} \exp\Big(-\frac{c}{2} \alpha k^\varepsilon\Big)
        <\infty,
    \end{align*}
    and \eqref{eq:supremum} follows.

    Under condition (ii) we immediately have
    \begin{align*}
        \sup_{n\in\NN}\EE\Big[\#(\cF_0(\cL^*(\eta_n)\cap[0,1]^d)^p\Big]\le \sup_{n\in\NN}\EE\big[\eta_n([0,1]^d\times \RR^d)^p\big]\le C \max\{1,\sup_{n\in \NN} a_n^p\} <\infty,
    \end{align*}
    and the proof follows.
\end{proof}

\section{Proof of Theorem \ref{thm:StrongConvergenceVoronoi}}\label{sec:ProofStrongConvergenceVoronoi}

We start by formulating a few auxiliary lemmas. Let 
\[
    {\rm AI}^{d+1}:=\{\by:=(y_1,\dots,y_{d+1})\in \RR^d\colon y_1,\dots,y_{d+1}\text{ are affinely independent}\}.
\]
Further for $\by=(y_1,\dots,y_{d+1})\in {\rm AI}^{d+1}$ denote by $B(\by)$ the unique ball containing $y_1,\dots, y_{d+1}$ on its boundary, and let $\rho(\by)>0$ be its radius and $c(\by)\in \RR^d$ its center.

\begin{lemma}\label{lm:continuity}
    The maps (i) ${\rm AI}^{d+1} \to \RR_+,\,\by \mapsto \rho(\by)$ and (ii) ${\rm AI}^{d+1} \to \RR^d, \, \by \mapsto \|c(\by)\|$ are continuous.
\end{lemma}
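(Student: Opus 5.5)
The circumcenter $c(\by)$ and circumradius $\rho(\by)$ are given by explicit rational functions of the coordinates of $y_1,\dots,y_{d+1}$, with denominators that do not vanish on ${\rm AI}^{d+1}$. Continuity then follows from continuity of rational functions away from the zero set of the denominator. Both claims follow from a formula for $c(\by)$.

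\textbf{Step 1: characterize the center.} The center $c=c(\by)$ is characterized by $\|c-y_i\|^2=\|c-y_1\|^2$ for $i=2,\dots,d+1$. Expanding the squares, this is equivalent to the linear system
\[
2\langle c, y_i-y_1\rangle=\|y_i\|^2-\|y_1\|^2,\qquad i=2,\dots,d+1.
\]
Write it as $A(\by)c=b(\by)$. Here $A(\by)$ is the $d\times d$ matrix with rows $2(y_i-y_1)^{\top}$, and $b(\by)\in\RR^d$ has entries $\|y_i\|^2-\|y_1\|^2$.

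\textbf{Step 2: solve the system.} Affine independence of $y_1,\dots,y_{d+1}$ is exactly linear independence of $y_2-y_1,\dots,y_{d+1}-y_1$. Hence $\det A(\by)\neq 0$ on ${\rm AI}^{d+1}$. This shows at once that $c(\by)$ exists and is unique, consistent with the definition of $B(\by)$. By Cramer's rule,
\[
c(\by)=A(\by)^{-1}b(\by)=\frac{\operatorname{adj}(A(\by))\,b(\by)}{\det A(\by)}.
\]
The entries of $A$ and $b$ are polynomials in the coordinates of $\by$, so the numerator and denominator are polynomials as well. The denominator is nonzero on ${\rm AI}^{d+1}$, so $\by\mapsto c(\by)$ is continuous there.

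\textbf{Step 3: conclude (i) and (ii).} Since ${\rm AI}^{d+1}$ is the complement of the closed zero set of $\det A$, it is an open subset of $(\RR^d)^{d+1}$, so the subspace topology raises no issue. The norm is continuous, which gives (ii). For (i), write $\rho(\by)=\|c(\by)-y_1\|$; this is a composition of continuous maps. Moreover $\rho(\by)>0$, since otherwise all $y_i$ would coincide with $c(\by)$, contradicting affine independence.

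\textbf{Main obstacle.} There is no real obstacle. The only point requiring care is that $\det A(\by)\neq 0$ is equivalent to affine independence, and this is immediate from the definition.
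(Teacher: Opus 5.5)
Your proof is correct and follows the paper's argument essentially verbatim. The paper likewise derives the linear system $2\langle c(\by),y_j-y_1\rangle=\|y_j\|^2-\|y_1\|^2$, uses $\det A(\by)\neq 0$ from affine independence, and invokes continuity of matrix inversion to get $c(\by)=A(\by)^{-1}b(\by)$; your adjugate formula just makes that explicit. It then obtains continuity of $\rho$ from $\rho(\by)=\|y_1-c(\by)\|$, exactly as you do.
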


\begin{lemma}\label{lm:conditionsImply}
    Let $(f_n)_{n\in \NN}\subset L_{\rm loc}^{1,+}(\RR_+)$ be a sequence of functions satisfying \hyperref[item:C2]{(C2)} and define 
    \begin{equation}\label{eq:probDensity}
        g_{n,s}(x):=\frac{s^d}{\pi^{d/2}(\frI^{d/2}f_n)(s^2)}f_n(s^2-s^2\|x\|^2),\quad x\in \BB^d,\, s>0.
    \end{equation}
    Then the following properties hold:
    \begin{itemize}
        \item[1.] $\PP_{n,s}(\cdot):=\int_{\BB^d}g_{n,s}(x)\ind(x\in\cdot)\dd x$ defines a probability measure on $\BB^d$;
        \item[2.] $\PP_{n,s} \underset{n\to\infty}\longrightarrow\tilde\sigma_{d-1}$ weakly for all $s>0$ where $\tilde\sigma_{d-1}$ denotes the uniform distribution on $\SS^{d-1}$.
    \end{itemize}
\end{lemma}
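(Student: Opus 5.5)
Both claims reduce to a change of variables into polar coordinates and the vague convergence in \hyperref[item:C2]{(C2)}. Since $f_n$ lives on $\RR_+$, the fractional integral is
\[
(\frI^{d/2}f_n)(s^2)=\frac{1}{\Gamma(d/2)}\int_0^{s^2}f_n(t)(s^2-t)^{d/2-1}\dd t.
\]

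\textbf{Part 1.} Write $x=\rho u$ with $\rho\in[0,1]$ and $u\in\SS^{d-1}$, so $\dd x=\rho^{d-1}\dd\rho\,\dd u$, and the total surface measure is $d\kappa_d=2\pi^{d/2}/\Gamma(d/2)$. This gives
\[
\int_{\BB^d}f_n(s^2-s^2\|x\|^2)\dd x=\frac{2\pi^{d/2}}{\Gamma(d/2)}\int_0^1 f_n(s^2(1-\rho^2))\rho^{d-1}\dd\rho .
\]
Now substitute $t=s^2(1-\rho^2)$, so $\dd t=-2s^2\rho\,\dd\rho$ and $\rho^{d-2}=((s^2-t)/s^2)^{d/2-1}$. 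The integral becomes
\[
\frac{\pi^{d/2}}{\Gamma(d/2)}\,s^{-d}\int_0^{s^2}f_n(t)(s^2-t)^{d/2-1}\dd t=\pi^{d/2}s^{-d}(\frI^{d/2}f_n)(s^2).
\]
Multiplying by the prefactor in \eqref{eq:probDensity} gives total mass $1$. The density $g_{n,s}$ is nonnegative, so $\PP_{n,s}$ is a probability measure. We need $(\frI^{d/2}f_n)(s^2)\in(0,\infty)$. Finiteness follows from local integrability of $f_n$, since $(s^2-t)^{d/2-1}$ is bounded on $[0,s^2]$ when $d\ge2$. Positivity holds for $n$ large, because $(\frI^1f_n)(s^2)\to\gamma>0$ forces $f_n$ to have positive mass on $[0,s^2/2]$, where the kernel is bounded below.

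\textbf{Part 2.} The measure $\PP_{n,s}$ is rotation invariant, so it is the law of $\rho U$ with $U\sim\tilde\sigma_{d-1}$ independent of $\rho$. It therefore suffices to show that the radial part $\rho$ converges in distribution to $1$. By the computation above, pushing forward through $t=s^2(1-\rho^2)$, the height variable $t$ has law
\[
\mu_n(\dd t)=\frac{f_n(t)(s^2-t)^{d/2-1}\ind(0\le t\le s^2)}{\Gamma(d/2)(\frI^{d/2}f_n)(s^2)}\,\dd t .
\]
We must show $\mu_n([\varepsilon,s^2])\to0$ for each $\varepsilon>0$, which means $t\to0$ and hence $\rho\to1$. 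Condition \hyperref[item:C2]{(C2)} gives
\[
\int_\varepsilon^{s^2}f_n\,\dd t=(\frI^1f_n)(s^2)-(\frI^1f_n)(\varepsilon)\to\gamma-\gamma=0 .
\]
The kernel $(s^2-t)^{d/2-1}$ is bounded on $[0,s^2]$, so the numerator mass on $[\varepsilon,s^2]$ tends to $0$. For the denominator, fix $\varepsilon<s^2/2$. On $[0,\varepsilon]$ the kernel is at least $\min\{(s^2/2)^{d/2-1},s^{d-2}\}>0$, so
\[
(\frI^{d/2}f_n)(s^2)\ge c\,(\frI^1f_n)(\varepsilon)\to c\gamma>0 .
\]
Hence $\mu_n\to\delta_0$ weakly, so $\rho\to1$ in probability, and then $\rho U\to U$ in distribution by Slutsky's lemma. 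Equivalently, for bounded continuous $\phi$ on $\BB^d$, the functions $\rho\mapsto\int\phi(\rho u)\tilde\sigma_{d-1}(\dd u)$ are continuous on $[0,1]$, which gives the convergence $\PP_{n,s}\to\tilde\sigma_{d-1}$ directly.

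\textbf{Main obstacle.} The only delicate point is the lower bound on the normalising constant when $d>2$. There the kernel $(s^2-t)^{d/2-1}$ vanishes at $t=s^2$, and a lower bound is needed to prevent mass from escaping. Restricting to $[0,\varepsilon]$, away from $t=s^2$, handles this as shown.
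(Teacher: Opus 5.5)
Your proof is correct, and it reaches the weak convergence in Part 2 by a different route from the paper. Part 1 is the same polar-coordinate computation.

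For Part 2, the paper argues by compactness. Since $\BB^d$ is compact, every subsequence of $(\PP_{n,s})_n$ has a weakly convergent further subsequence. Any such limit is rotation invariant. Because $\PP_{n,s}(r\BB^d)\to 0$ for every $r\in(0,1)$, it is also concentrated on $\SS^{d-1}$. Hence it equals $\tilde\sigma_{d-1}$ by uniqueness of the rotation-invariant probability on the sphere, and the subsequence principle finishes.

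You instead use the radial form of $g_{n,s}$ to write $\PP_{n,s}$ as the law of $\rho U$, with $U\sim\tilde\sigma_{d-1}$ independent of $\rho$. You then show directly that $\rho\to 1$ in probability. This avoids relative compactness and the uniqueness theorem, and is slightly more elementary.

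The analytic core is the same in both arguments. The quantity $\PP_{n,s}(r\BB^d)=\mu_n([s^2(1-r^2),s^2])$ has numerator at most $s^{d-2}\int_{s^2(1-r^2)}^{s^2} f_n\,\dd t\to 0$.

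You also prove the lower bound
$\Gamma(d/2)(\frI^{d/2}f_n)(s^2)\ge (s^2/2)^{d/2-1}(\frI^1 f_n)(\varepsilon)\to (s^2/2)^{d/2-1}\gamma>0$
for the normalising constant. The paper's estimate
$\PP_{n}(r\BB^d)\le \frac{(sr)^{d-2}}{\Gamma(d/2)(\frI^{d/2}f_n)(s^2)}\int_{s^2(1-r^2)}^{s^2}f_n(t)\,\dd t$
tacitly needs this bound but never verifies it. Similarly, your observation that $(\frI^{d/2}f_n)(s^2)>0$ only for $n$ large (so that $g_{n,s}$ is defined at all) is a point the paper passes over.

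One small imprecision: positivity of the mass of $f_n$ on $[0,s^2/2]$ follows from (C2) applied at $x=s^2/2$. It does not follow from $(\frI^1 f_n)(s^2)\to\gamma$ alone, as your wording suggests.
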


The proofs of these technical lemmas can be found in Section \ref{sec:proofs}.

\begin{proof}[Proof of Theorem \ref{thm:StrongConvergenceVoronoi}]

As in the proof of Theorem \ref{thm:StrongProbConvLaguerre} let $C$, $\tilde C$, $c$, $\tilde c$ be some positive constants, which only depend on $d$, and whose exact value might be different from line to line. For simplicity we set $\eta_0:=\eta^{\gamma}$ and $\widetilde{\eta}_0:=\widetilde{\eta}^{\gamma}$. The strategy of the proof is similar to the one used in Theorem \ref{thm:StrongProbConvLaguerre}. In the first step, for given $n\in\NN_0$ and $R>0$, we estimate the probability that the restriction of $\eta_n$ to a certain region $K_2\subset \RR^{d}\times\RR$ determines $\sk{L}(\eta_n)\cap B_R$ and $\sk{L}^*(\eta_n)\cap B_R$. This region will again be used to construct the processes $\widetilde{\eta}_n$, $n\in\NN_0$, but the additional difficulty in this case arises from the fact that $\widetilde \eta_0$ is a point process in $\RR^d$, while $\widetilde \eta_n$ is a point process in $\RR^d\times \RR_+$. Hence, we may not use the maximal coupling directly as in the proof of Theorem \ref{thm:StrongProbConvLaguerre} and additional arguments are required.

\medskip

\textbf{Preparations:} Let $R, r>0$ and for $n \in\NN_0$ consider the following auxiliary events $\cE_n:=\cE(\eta_n, R,r)$ (see \eqref{eq:Eevent}) and $\cH^{{\rm max}}_{n}:=\cH^{\rm{max}}(\eta_n, R, (R+r)^2)$ (see \eqref{eq:Hmin}). Further define the region
\[
    K_2(R,r):=\big\{(v,h)\in\RR^d\times \RR_+: \|v\|\leq 2(R+r),\,h\leq (R+r)^2\big\}.
\]
We note that since $\eta_n$ is a Poisson point process on $\RR^d\times \RR_+$ we have that $\cH^{\rm min}(\eta_n,a,0)$ holds almost surely for any $a>0$. Hence, by Lemma \ref{lm:Stabilisation} with $t=0$ we note $K_0(R,r,0)=K_2(R,r)$. Then, on $\cE_{n}$ it holds that
\begin{equation}\label{eq:02_01_26_eq4}
    \bigcup_{S\in \cL^*(\eta_n), S\cap B_R\neq \varnothing} \bd S = \bigcup_{S\in \cL^*(\eta_n\cap K_2(R,r)), S\cap B_R\neq \varnothing} \bd S,
\end{equation}
and on $\cH_{n}^{\rm max}$ we have  
\begin{equation}\label{eq:04_01_26_eq4}
    \sk{L}(\eta_n)\cap B_R=\sk{L}(\eta_n\cap K_2(R,r))\cap B_R.
\end{equation}

Further, note that if $(f_n)_{n\in\NN}$ satisfies \hyperref[item:C2]{(C2)} there exits $n_0\in \NN$ such that $|(\frI^1f_{n})(1/2)-\gamma\big| < \gamma/2$ for all $n\ge n_0$. Then for all $n\ge n_0$ and $n=0$ by 
 Lemma \ref{lm:BoundaryBounds} we get
 \begin{equation*}\label{eq:04_01_26_eq5}
    \PP\big(\cH_n^{\rm max})<C\exp(-c\gamma r^d),
 \end{equation*}
 and by Lemma \ref{lm:StabRadius} we obtain
\begin{align}\label{eq:eventBounds}
        \PP\big(\cE_n^c)<C\exp(-c\gamma r^d),
\end{align}    
where we additionally used that $(\frI^{\alpha}f_n)(t)=0$ for any $\alpha>0$, $n\in\NN$ and $t\leq 0$.

\medskip

\textbf{Construction of the point processes:} Next we construct point processes $(\widetilde{\eta}_n)_{n\in\NN_0}$, defined on the same probability space and such that $\widetilde{\eta}_n\overset{d}{=}\eta_n$ for all $n\in\NN_0$. Let $(r_n)_{n\in\NN}$ be a monotone increasing sequence (to be specified later) such that $r_n\to\infty$ as $n\to\infty$ and define $\widetilde{\eta}_0$ to be a homogeneous Poisson point process on $\RR^d$ with intensity $\gamma$ (i.e. $\widetilde{\eta}_0\overset{d}{=}\eta_0$). Further, let
\[
    \widetilde{\eta}_n=\xi_n+\hat\xi_n,\qquad n\in\NN,
\]     
where $\xi_n$ and $\hat\xi_n$ are independent, $\hat\xi_n$ is a Poisson point process (on $K_2(r_n/2,r_n)^c$) with intensity measure having density $(v,h)\mapsto f_n(h){\bf 1}_{K_2(r_n/2,r_n)^c}(v,h)$ and $\xi_n$ is an independent $\QQ_n$-marking of a homogeneous Poisson point process $\xi_n^0$ on $B_{3r_n}$ (note that $B_{3r_n}=\proj_{\RR^d}(K_2(\frac{r_n}{2},r_n))$) of intensity $(\frI^1 f_n)(9 r_n^2/4)$, with
\[
    \QQ_n(B) = \Big(\big(\frI^1 f_n\big)(9r_n^2/4)\Big)^{-1} \int_B f_n(h)\ind(h\in [0,9r_n^2/4]) \dd h, \qquad B \in \cB(\RR_+).
\]
By the marking theorem \cite[Theorem 5.6]{LP} and the superposition theorem \cite[Theorem 3.3]{LP} we have $\widetilde\eta_n\overset{d}{=}\eta_n$. Finally, we assume that $\hat\xi_n$ is independent of $\widetilde \eta_0$ for all $n\in\NN$ and $\xi_n^0$, $n\in\NN$, is constructed in such a way, that the random elements $\xi_n^0$ and $\widetilde \eta_0\cap B_{3r_n}$ coincide maximally (see \cite[Theorem 7.3, p.107]{thorisson2000coupling}). Considering the coupling event 
\[
    \cY_n:=\{{\widetilde\eta}_0\cap B_{3r_n}={\xi}_{n}^{0}\},
\]
by \cite[Equation (8.18), p.112]{thorisson2000coupling} and \cite[Theorem 3.2.2]{reiss1993course} we then obtain
\begin{align}
    \PP\left(\cY_n^c\right) &\le \frac{3}{2} \int_{\{v\in \RR^{d}: \|v\| \le 3r_n\}} \left\lvert \gamma -\big(\frI^1 f_n\big)(9r_n^2/4)\right\rvert \dd v \le C  \left\lvert \gamma -\big(\frI^1 f_n\big)(9r_n^2/4)\right\rvert r_n^d.\label{eq:02_01_26_eq5}
\end{align}

In order to prove (i) we now need to show that for any $R>0$ we have that 
\begin{equation}\label{eq:02_01_26_eq6}
\lim_{n\to\infty}\PP(\sk{L}^*(\widetilde{\eta}_n)\cap B_R\neq \sk{L}^*(\widetilde{\eta}_0)\cap B_R)=0,
\end{equation}
for a suitable choice of the sequence $(r_n)_{n\in\NN}$. Similarly, in order to prove (ii) by \eqref{eq:ConvProb} it is sufficient to ensure that for any $R>0$ and $\varepsilon>0$ we have
\begin{equation}\label{eq:04_02_26_eq6}
\lim_{n\to\infty}\PP\Big(\big[(\sk{L}(\widetilde{\eta}_n)\setminus (\sk{L}(\widetilde{\eta}_0))^{\varepsilon-})\cup (\sk{L}(\widetilde{\eta}_0)\setminus (\sk{L}(\widetilde{\eta}_n))^{\varepsilon-})\big]\cap B_R\neq \varnothing\Big)=0,
\end{equation}
since any compact set $C\in\cC$ is contained in some ball of radius $R>0$. Here we recall that 
\[
    X^{\varepsilon-} = \{z\in \RR^d:\inf_{x\in X} \|z-x\| < \varepsilon\}.
\]

In the next two parts of the proof we will derive bounds for the above probabilities, while in the final part of the proof we will show that both bounds tend to $0$ as $n\to\infty$ for a suitable choice of the sequence $(r_n)_{n\in\NN}$.

\medskip

{\textbf{Estimate for the dual Laguerre tessellations:}} Let $n_1(R)>0$ be such that $r_n\ge 2\max(R,1)$ for any $n>n_1$. By the law of total probability in combination with \eqref{eq:02_01_26_eq4} and the fact that $K_2(R,r_n)\subset K_2(r_n/2,r_n)$ and $\widetilde\eta_n\cap K_2(r_n/2,r_n)=\xi_n$ almost surely, we get
\begin{equation}\label{eq:Intermediate}
    \begin{aligned}
        \PP\big(\sk{L}^*(\widetilde{\eta}_n)&\cap B_R\neq \sk{L}^*(\widetilde{\eta}_0)\cap B_R\big)\\
        &\le\PP\Big(\bigcup_{S\in \cL^*(\widetilde{\eta}_n), S\cap B_R\neq \varnothing} \bd S\neq \bigcup_{S\in \cL^*(\widetilde{\eta}_0), S\cap B_R\neq \varnothing} \bd S\Big)\\
        &= \PP\Big(\Big\{\bigcup_{S\in \cL^*(\xi_n), S\cap B_R\neq \varnothing} \bd S\neq \bigcup_{S\in \cL^*(\widetilde{\eta}_0\cap B_{3r_n}), S\cap B_R\neq \varnothing}\bd S\Big\} \cap \cE_{n}\cap \cE_0\cap \cY_n\Big)\\
        &\hspace{10cm}+\PP\big((\cE_n\cap \cE_0\cap \cY_n)^c\big)\\
        &\le\PP\Big(\Big\{\bigcup_{S\in \cL^*(\xi_n)} \bd S\neq \bigcup_{S\in \cL^*(\xi_n^0)}\bd S\Big\} \cap \cE_{n}\Big)+\PP\big(\cE_n^c\big)+\PP\big(\cE_0^c\big)+\PP\big(\cY_n^c\big),
    \end{aligned}    
\end{equation}
where in the last step we used that $\xi_n^0=\widetilde\eta_0\cap B_{3r_n}$ almost surely on $\cY_n$.

Writing
\begin{equation}\label{eq:Sn}
    \cS_n:=\Big\{\bigcup_{S\in \cL^*(\xi_n)}\bd S= \bigcup_{S\in \cL^*(\xi_{n}^0)} \bd S\Big\}=\big\{\sk{L}^*(\xi_{n}^0)=\sk{L}^*(\xi_{n})\big\},
\end{equation}
our next goal is to estimate $\PP(\cS_n^c\cap \cE_n)$.

We start by noting that by construction $\xi_n^0=\proj_{\RR^d}(\xi_n)$ and, hence, by \eqref{eq:VertExtermal} for any  
$(v,h)\in \ver(\Phi(\xi_n))$ it holds that $(v,0)\in \ver(\Phi(\xi_n^0))$ since all points are extremal in the latter case and $\ver(\Phi(\xi_n^0))=\xi_n^0$.
Moreover, all cells of $\cL^*(\xi_n^0)$ and $\cL^*(\xi_n)$ are simplices almost surely. Hence, if for any $d+1$ distinct points $(v_1,h_1),\ldots,(v_{d+1},h_{d+1})\in\xi_n$, such that $\conv(v_1,\ldots,v_{d+1})$ forms a cell of the tessellation $\cL^*(\xi_n^0)$, it holds that $\conv(v_1,\ldots,v_{d+1})$ forms a cell of the tessellation $\cL^*(\xi_n)$ as well, the event $\cS_n$ occurs (almost surely). The event $\cS_n^c$, thus, implies that there exist $d+1$ distinct points $(v_1,h_1),\ldots,(v_{d+1},h_{d+1})\in\xi_n$ such that
$\conv(v_1,\ldots,v_{d+1})$ forms a cell of $\cL^*(\xi_n^0)$ but does not form a cell of $\cL^*(\xi_n)$, which by construction means that
\[
    \xi_n^0\cap\inter B(\bv)=\varnothing \text{ and } \xi_n \cap \inter (\Pi^-(\bv,\bh))^{\downarrow}\neq \varnothing,
\]
where we used the notation $\bv=(v_1,\ldots,v_{d+1})$, $(\bv,\bh)=((v_1,h_1),\ldots,(v_{d+1},h_{d+1}))$ and we recall that for $\bv\in {\rm AI}^{d+1}$ we write $B(\bv)$ for the unique ball containing $v_1,\ldots,v_{d+1}$ on its boundary and $\Pi^{-}(\bv,\bh)$ for the unique downward paraboloid containing $(v_1,h_1),\ldots,(v_{d+1},h_{d+1})$. Additionally, 
on $\cE_{n}$, and since all height coordinates of $\xi_n$ are non-negative, we have that the apex  $(z,s^2):=\apex(\Pi^-(\bv,\bh))$ satisfies $s\in [0,3r_n/2]$ and $\|z\|\leq \sqrt{9r^2_n/4-s^2}$. 
We denote the corresponding events in the following way
\begin{align*}
    \cB_\varnothing=\cB_{\varnothing}(\xi_n,\bv)&:=\{\xi_n^0\cap\inter B(\bv)=\varnothing\}=\{\xi_n\cap(\inter B(\bv)\times[0,9r_n^2/4])=\varnothing\},\\
    \cP_\varnothing=\cP_{\varnothing}(\xi_n,(\bv,\bh))&:=\{\xi_n \cap \inter (\Pi^-(\bv,\bh))^{\downarrow}=\varnothing\},\\
    \cA=\cA(\bv,\bh) &:= \{(z,s^2)=\apex(\Pi^-(\bv,\bh))\text{ satisfies }s\in [0,3r_n/2], \|z\|\leq \sqrt{9r^2_n/4-s^2}\}.
\end{align*}
Hence, we have
\[
    \PP(\cS_n^c \cap \cE_{n})\le \PP\big(\exists (v_1,h_1),\dots,(v_{d+1},h_{d+1})\in \xi_n: \cB_\varnothing\cap\cP_\varnothing^c\cap\cA\big).   
\]
Note that $\xi_n$ is by \cite[Theorem 5.2]{LP} a Poisson point process with intensity measure
\begin{equation}\label{eq:newInt}
\Lambda_{r_n}(\cdot)=\int_{B_{3r_n}}\int_{0}^{9r_n^2/4}\ind((v,h)\in\cdot)f_n(h)\dint h\dint v.
\end{equation}
Then using the inequality $\PP(X\neq 0)\leq \EE(X)$ for a non-negative integer-valued random variable $X$ and the multivariate Mecke formula \cite[Corollary 3.2.3]{SW} we get
\begin{equation}\label{eq:tessnotequal}
    \begin{aligned}
        \PP(\cS_n^c \cap \cE_{n})
        &\le \EE \sum_{(v_1,h_1),\dots,(v_{d+1},h_{d+1})\in (\xi_n)_{\neq}^{d+1}} \ind\big(\cB_\varnothing(\xi_n,\bv)\cap\cP_\varnothing^c(\xi_n,(\bv,\bh))\big)\ind(\cA(\bv,\bh))\\
        &\leq \int_{(B_{3r_n})^{d+1}} \int_{(\RR_+)^{d+1}} \PP_{\xi_n}\big(\cB_\varnothing\cap\cP_\varnothing^c\big)\ind(\cA)\prod_{i=1}^{d+1}f_n(h_i) \ind(h_i\le 9r_n^2/4) \dd h_i\,\dd v_i,
    \end{aligned}
\end{equation}
where $n\in \NN$ and $(\xi_n)_{\neq}^{d+1}$ is the process of $(d+1)$-tuples of distinct points of $\xi_n$, and $\cB_\varnothing\cap\cP_\varnothing^c$ is an event depending on fixed points $(\bv,\bh)$ and a point process $\xi_n$. 
Further, we note that by the definition of the events $\cB_{\varnothing}$ and $\cP_{\varnothing}$ (see Figure \ref{fig:parabolaSetmiusBall}) we have 
\begin{equation}\label{eq:03_01_26_eq1}
\begin{aligned}
\PP_{\xi_n}\big(\cB_\varnothing\cap\cP_\varnothing^c\big)
&=\PP\Big[\xi_n\Big(\inter (\Pi^-(\bv,\bh))^{\downarrow}\setminus \big(\inter B(\bv)\times [0,9r_n^2/4]\big)\Big)\neq 0\Big]\\
&\leq \EE\Big[\xi_n\Big((\Pi^-(\bv,\bh))^{\downarrow}\setminus \big(B(\bv)\times [0,9r_n^2/4]\big) \Big)\Big],
\end{aligned}
\end{equation}
\begin{figure}
    \centering
    \includegraphics[width=0.8\linewidth]{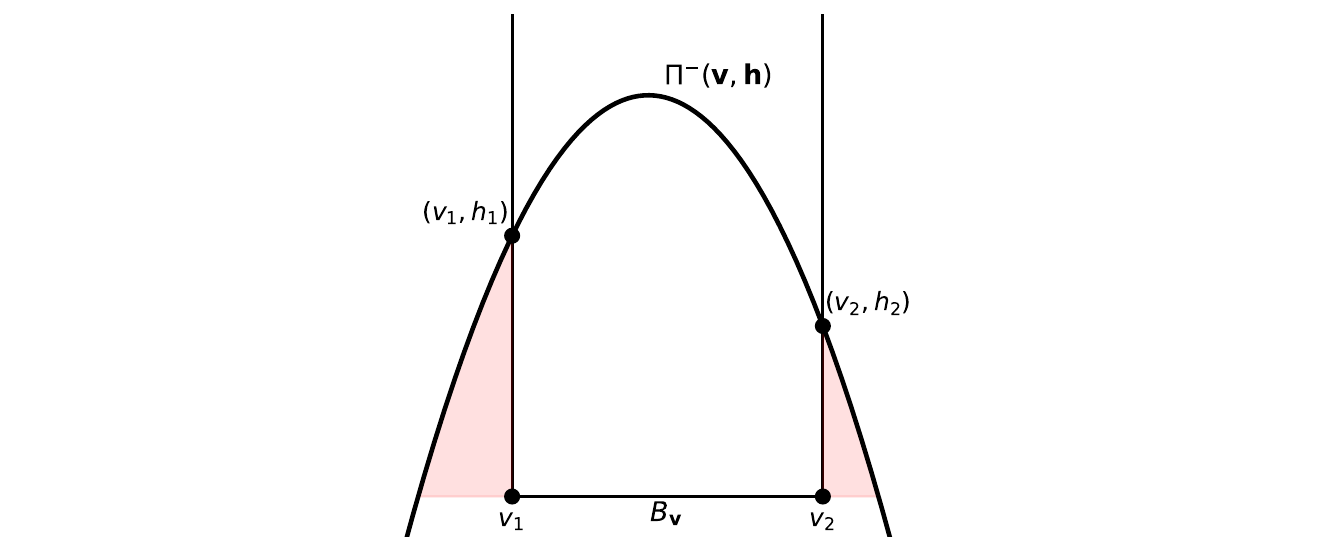}
    \caption{The downward parabola $(\Pi^-(\bv,\bh))^{\downarrow}$ and the stripe $\big(B(\bv)\times [0,9r_n^2/4]\big)$ in $\RR^2$. }
    \label{fig:parabolaSetmiusBall}
\end{figure}
where in the last step we again used the inequality $\PP(X\neq 0)\leq \EE(X)$ for a non-negative integer-valued random variable $X$ and the fact that the intensity measure of $\xi_n$ is absolutely continuous with respect to Lebesgue measure.
 
Recall that $(z,s^2)$ denotes the apex of the downward paraboloid $\Pi^-(\bv,\bh)$. Then we can write $v_i=z+sy_i$ with uniquely defined and pairwise distinct $y_1,\dots, y_{d+1}\in \BB^{d}$ and also $h_i=s^2(1-\|y_i\|^2)$. Consider the transformation
\begin{equation}\label{eq:transformation}
    \begin{aligned}
        \varphi\colon  &\RR^{d} \times \RR_+ \times (\BB^{d})^{d+1} &&\to &&(\RR^{d} \times \RR_+)^{d+1}\\
        &(z,s,y_1,\dots,y_{d+1}) &&\mapsto &&(z+s y_1,s^2(1-\|y_1\|^2),\dots,z+sy_{d+1},s^2(1-\|y_{d+1}\|^2)),
    \end{aligned}
\end{equation}
with Jacobian 
\begin{align}\label{eq:Jacobian}
|\det J(\varphi)| =2^{d+1}d!s^{(d+1)^2}\lambda_d\big(\conv(y_1,\ldots,y_{d+1})\big)\leq 2^{d+1}d!\kappa_d s^{(d+1)^2}    
\end{align}
(see \cite[Theorem 5.1]{GKT21}). Note that the event $\cA$ implies $z\in B_{3r_n/2}$ and $s\le 3r_n/2$. Applying the transformation $\varphi$ to \eqref{eq:tessnotequal} together with \eqref{eq:03_01_26_eq1} then yields
\begin{equation}\label{eq:p_no_b}
    \begin{aligned}
    \PP(\cS_n^c \cap \cE_{n}) & \le C\int_{(\BB^d)^{d+1}} \int_{0}^{3r_n/2} \int_{B_{3r_n/2}} s^{(d+1)^2}\EE\Big[\xi_n\Big((\Pi^-_{(z,s^2)})^{\downarrow} \setminus \big((s B(\by)+z)\times [0,9r_n^2/4]\big)\Big) \Big]\\
    &\hspace{1.5cm}  \times \prod_{i=1}^{d+1}f_n\big(s^{2} (1-\|y_i\|^2)\big)\ind\big(sy_i+z\in B_{3r_n}\big) \dd z\,\dd s \,\dd y_1,\dots, \dd y_{d+1}.
\end{aligned} 
\end{equation}

Let $s\in(0,3r_n/2]$, $z\in B_{3r_n/2}$ and $\by\in (\BB^d)^{d+1}\cap {\rm AI}^{d+1}$. Recall, that  $\rho(\by)$ denotes the radius and $c(\by)$ the center of the ball $B(\by)$. Then by \eqref{eq:newInt} we have
\begin{align*}
     E(n,s,\by,z)&:= \EE\Big[\xi_n\Big((\Pi^-_{(z,s^2)})^{\downarrow} \setminus ((sB(\by)+z)\times [0,9r_n^2/4])\Big)\Big]\notag\\
    &= \int_{B_{3r_n}} \int_0^{9r_n^2/4} f_n(h) \ind\Big((v,h)\in (\Pi^-_{(z,s^2)})^{\downarrow} \setminus \big((sB(\by)+z)\times [0,9r_n^2/4]\big)\Big) \dd h\, \dd v.
\end{align*}
Let us now analyze the indicator function. Note that $(v,h)\in (\Pi^-_{(z,s^2)})^\downarrow$ means $h\leq -\|v-z\|^2+s^2$, which in turn implies $\|v-z\|\leq \sqrt{s^2-h}\leq s$ and $h\le s^2$. At the same time the condition $(v,h)\not\in (s B(\by) + z) \times [0,9r_n^2/4]$ is equivalent to $\|v-sc(\by)-z\|> s\rho(\by)$ for any $h\in[0,9r_n^2/4]$. Hence,
\begin{align*}
    \ind\Big((v,h)&\in (\Pi^-_{(z,s^2)})^{\downarrow} \setminus \big((sB(\by)+z)\times [0,9r_n^2/4]\big)\Big)\\
    &\leq \ind\big(\|v-z\|\leq s,\|v-sc(\by)-z\|>s\rho(\by), h\leq s^2).
\end{align*}
Then applying the change of variables $v'=v-z$ and noting that $z\in B_{3r_n/2}$ and $v\in B_{3r_n}$ implies $v'\subset B_{5r_n}$, we get
\begin{align}
    E(n,s,\by,z)&\le \int_{B_{5r_n}} \int_0^{s^2} f_n(h) \ind\big(\|v'\|\leq s,\|v'-sc(\by)\|> s\rho(\by)\big)\dd h\, \dd v'\notag\\
    &\le \int_{B_{5r_n}} \int_0^{s^2} f_n(h) \ind\big(s(\rho(\by)-\|c(\by)\|)<\|v'\|\leq s\big)\dd h\, \dd v',\notag
\end{align} 
where in the second step we also used the triangle inequality.
Since 
\[
    \big|\rho(\by)-\|c(\by)\|\big|=\big|\|c(\by)-y'\|-\|c(\by)\|\big|\le \|y'\|\leq 1
\] 
for some $y'\in \BB^d$ the condition $s(\rho(\by)-\|c(\by)\|)\le s$ holds for any $\by\in (\BB^d)^{d+1}\cap{\rm AI}^{d+1}$. Thus,
\begin{align}
    E(n,s,\by,z)&\le Cs^d\big(\frI^1f_n\big)(s^2)\big(1-(\rho(\by)-\|c(\by)\|)_+^d\big),\label{eq:bound_p_no_b}
\end{align}
where $x_+=\max(x,0)$.
Further for $s\in [0,3r_n^2/2]$ and $\by\in (\BB^d)^{d+1}\cap{\rm AI}^{d+1}$ define
\begin{align}
    h(\by)&:=\big(1-(\rho(\by)-\|c(\by)\|)_+^d\big).\label{eq:06_01_26_eq1}
\end{align}
Recall the definition \eqref{eq:probDensity} of the probability density $g_{n,s}$ and let $Y^{(n,s)}_1,\ldots,Y^{(n,s)}_{d+1}$ be independent and identically distributed random points in $\BB^d$ with density $g_{n,s}$ for any $n\in \NN$ and $s\ge 0$. Then applying the bound in \eqref{eq:bound_p_no_b} to \eqref{eq:p_no_b}, estimating $\ind(sy_i+z\in B_{3r_n})\leq 1$ and integrating with respect to $z$, and finally using Fubini's theorem yields
\begin{align}
    \PP(\cS_n^c \cap \cE_{n}) 
    &\le Cr_n^d \int_{0}^{3r_n/2} s^{(d+1)^2+d} \big(\frI^1f_n\big)(s^2) \int_{(\BB^d)^{d+1}} h(s,\by)\prod_{i=1}^{d+1}f_n(s^{2}-s^{2}\|y_i\|^2))\dint y_i \dd s\notag\\
    &= Cr_n^d\, \int_{0}^{3r_n/2} s^{2d+1}\Big(\big(\frI^{\frac{d}{2}} f_n\big)(s^2)\Big)^{d+1} \big(\frI^1f_n\big)(s^2)\EE\big[h(Y^{(n,s)}_1,\ldots,Y^{(n,s)}_{d+1})\big] \dd s.\label{eq:04_01_25_eq2}
\end{align}
Further denote by 
\begin{equation}\label{eq:05_01_26_eq9}
F^{(1)}_n(s):=s^{2d+1}\Big(\big(\frI^{\frac{d}{2}} f_n\big)(s^2)\Big)^{d+1} \big(\frI^1f_n\big)(s^2)\EE\big[h(Y^{(n,s)}_1,\ldots,Y^{(n,s)}_{d+1})\big]
    \end{equation}
and 
combining \eqref{eq:Intermediate} with \eqref{eq:eventBounds}, \eqref{eq:02_01_26_eq5} and \eqref{eq:04_01_25_eq2} for any $n\ge \max (n_0,n_1)$ we finally obtain
\begin{align}
    \PP\big(\sk{L}^*(\widetilde{\eta}_n)&\cap B_R\neq \sk{L}^*(\widetilde{\eta}_0)\cap B_R\big)\notag\\
    &\leq C\Big(\exp\big(-cr_n^d\big)+r_n^d\Big(\int_{0}^{3r_n/2}F^{(1)}_n(s)\dd s+\left\lvert \gamma -\big(\frI^1 f_n\big)(9r_n^2/4)\right\rvert\Big)\Big).\label{eq:05_01_26_eq8}
\end{align}

\medskip
\textbf{Estimate for the Laguerre tessellation:} As in the previous part let $n_1(R)>0$ be such that $r_n\ge 2\max(R,1)$ for any $n>n_1$ and by the law of total probability in combination with \eqref{eq:04_01_26_eq4} and the fact that $K_2(R,r_n)\subset K_2(r_n/2,r_n)$, $\widetilde\eta_n\cap K_2(r_n/2,r_n)=\xi_n$ and $\xi_n^0=\widetilde\eta_0\cap B_{3r_n}$ on $\cY_n$ almost surely we get
\begin{align}
    \PP\Big(\big[&(\sk{L}(\widetilde{\eta}_n)\setminus (\sk{L}(\widetilde{\eta}_0))^{\varepsilon-})\cup (\sk{L}(\widetilde{\eta}_0)\setminus (\sk{L}(\widetilde{\eta}_n))^{\varepsilon-})\big]\cap B_R\neq \varnothing\Big)\notag\\
    &= \PP\Big(\Big\{\big[(\sk{L}(\widetilde{\eta}_n)\setminus (\sk{L}(\widetilde{\eta}_0))^{\varepsilon-})\cup (\sk{L}(\widetilde{\eta}_0)\setminus (\sk{L}(\widetilde{\eta}_n))^{\varepsilon-})\big]\cap B_R\neq \varnothing\Big\}\cap\cH_0^{\rm max}\cap\cH_n^{\rm max}\cap \cY_n\Big)\notag\\
    &\hspace{10cm}+\PP\big((\cH_0^{\rm max}\cap\cH_n^{\rm max}\cap \cY_n)^c\big)\notag\\
    &\leq \PP\Big((\sk{L}(\xi_n)\setminus (\sk{L}(\xi_n^0))^{\varepsilon-})\cup (\sk{L}(\xi_n^0)\setminus (\sk{L}(\xi_n))^{\varepsilon-})\neq \varnothing\Big)\notag\\
    &\hspace{7cm}+\PP\big((\cH_0^{\rm max})^c\big)+\PP\big((\cH_n^{\rm max})^c\big)+\PP\big(\cY_n^c\big).\label{eq:05_01_26_eq1}
\end{align}
Again using the law of total probability and recalling the definition of the event $\cS_n$ (see \eqref{eq:Sn}) and $\cE_n$ we have
\begin{equation}\label{eq:05_01_26_eq2}
\begin{aligned}
    \PP\Big((\sk{L}(\xi_n)&\setminus (\sk{L}(\xi_n^0))^{\varepsilon-})\cup (\sk{L}(\xi_n^0)\setminus (\sk{L}(\xi_n))^{\varepsilon-})\neq \varnothing\Big)\\
    &\le \PP\Big(\Big\{(\sk{L}(\xi_n)\setminus (\sk{L}(\xi_n^0))^{\varepsilon-})\cup (\sk{L}(\xi_n^0)\setminus (\sk{L}(\xi_n))^{\varepsilon-})\neq \varnothing\Big\}\cap \cS_n\cap \cE_n\Big)\\
    &\hspace{10cm}+\PP\big((\cS_n\cap \cE_n)^c\big)\\
    &\le \PP\Big(\big\{\sk{L}(\xi_n)\not \subset (\sk{L}(\xi_n^0))^{\varepsilon-}\big\}\cap \cS_n\cap \cE_n\Big)+\PP\Big(\big\{\sk{L}(\xi_n^0)\not \subset (\sk{L}(\xi_n))^{\varepsilon-}\big\}\cap \cS_n\cap \cE_n\Big)\\
    &\hspace{9.5cm}+ \PP(\cS_n^c\cap \cE_n)+\PP(\cE_n^c).
\end{aligned}
\end{equation}
 Note that
 \begin{align*}
     \sk{L}(\xi_n)&=\bigcup_{(v,h)\in\xi_n}{\rm bd}\,C((v,h),\xi_n),\\
     \sk{L}(\xi_n^0)&=\bigcup_{v\in \xi_n^0}{\rm bd}\,C(v,\xi_n^0).
\end{align*}
Further, on $\cS_n$ we have that $C((v,h),\xi_n)\cap C((v',h'),\xi_n)\neq \varnothing$ if and only if $C(v,\xi_n^0)\cap C(v',\xi_n^0)\neq \varnothing$, implying; in particular, that for any $(v,h)\in \xi_n$ the cells $C((v,h),\xi_n)$ and $C(v,\xi_n^0)$ have the same combinatorial structure (i.e. face lattice, see \cite[Definition 2.6.]{z-lop-95}). This follows by the duality relation and by the fact that $\cL^*(\xi_n)$ and $\cL^*(\xi_n^0)$ coincide on $\cS_n$. Moreover, by \eqref{eq:VertExtermal} we have 
\[
\ver(\Phi(\xi_n^0))=\ext(\Psi(\xi_n^0))=\xi_n^0,
\]
and, hence, $C(v,\xi_n^0)\neq \varnothing$ for any $v\in\xi_n^0$ implying that $C((v,h),\xi_n)\neq \varnothing$ for any $(v,h)\in\xi_n$.

 We will first consider the event $\big\{\sk{L}(\xi_n)\not \subset (\sk{L}(\xi_n^0))^{\varepsilon-}\big\}\cap \cS_n\cap\cE_n$, which means that there exists $(v,h)\in \xi_n$ and a point $x\in \bd C((v,h),\xi_n)$ such that
 \[
    \inf_{y\in \bd C(v,\xi^0)} \|x-y\| \ge \inf_{y\in \sk{L}(\xi_n^0)} \|x-y\| \ge \varepsilon.
 \] 
Assume that $C((v,h),\xi_n)$ has vertices $w_1,\ldots,w_m$, where $m\in \NN$. Then on $\cS_n$ the cell $C(v,\xi^0_n)$ has exactly $m$ vertices $w_1',\ldots,w_m'$ and, furthermore, for each $i=1,\ldots,m$ we have that almost surely there exist exactly $d+1$ points $(v_1,h_1),\ldots,(v_{d+1},h_{d+1})\in \xi_n$ such that 
\[
    w_i=\proj_{\RR^d} \big(\apex (\Pi^-(\bv,\bh))\big),\qquad  w_i'=c(\bv),
\]
where we recall the notation $\bv=(v_1,\ldots,v_{d+1})$, $(\bv,\bh)=((v_1,h_1),\ldots,(v_{d+1},h_{d+1}))$ and that for $\bv\in {\rm AI}^{d+1}$ we denote by $c(\bv)$ the center of the unique ball $B(\bv)$ containing $v_1,\ldots,v_{d+1}$ on its boundary and by $\Pi^{-}(\bv,\bh)$ the unique downward paraboloid containing $(v_1,h_1),\ldots,(v_{d+1},h_{d+1})$. Since $x\in \bd C((v,h),\xi_n)$ there exists a facet $F$ of $C((v,h),\xi_n)$ with vertices $w_{i_1},\ldots,w_{i_k}$, $k<m$, such that $x\in F$. Note that then there is a facet $F'$ of $C(v,\xi^0)$ with vertices $w_{i_1}',\ldots,w_{i_k}'$. 

In general, $F$ and $F'$ are $(d-1)$-dimensional convex polyhedral sets. Due to \eqref{eq:halfspaces} and $\cS_n$ the faces $F$ and $F'$ are contained in parallel hyperplanes, have the same face lattice and their corresponding faces are also parallel. 
Hence by the Minkowski-Weyl theorem for convex polyhedral sets the faces $F,F'$ can be represented as
\[
    F=\conv(w_{i_1},\ldots,w_{i_k})+{\rm pos}(r_1,\ldots, r_q),\qquad F'=\conv(w'_{i_1},\ldots,w'_{i_k})+{\rm pos}(r_1,\ldots, r_q),
\]
where $r_1,\ldots,r_q$ are some unit vectors and ${\rm pos}(r_1,\ldots,r_q)$
is a cone spanned by them. Hence, there exist $\lambda_{i_1},\ldots, \lambda_{i_k}\in [0,1]$ with $\lambda_{i_1}+\ldots + \lambda_{i_k}=1$ and $\mu_1,\ldots,\mu_q\ge 0$, such that $x = \sum_{j=1}^k \lambda_{i_j} w_{i_j}+\sum_{i=1}^q\mu_ir_i$. Moreover, $x':=\sum_{j=1}^k \lambda_{i_j} w'_{i_j}+\sum_{i=1}^q\mu_ir_i\in F'$ and we get
\begin{align*}
    \inf_{y\in \bd C(v,\xi^0)} \|x-y\| &\le \Big\|\sum_{j=1}^k \lambda_{i_j} w_{i_j}- \sum_{j=1}^k \lambda_{i_j} w_{i_j}'\Big\|\leq \max_{1\leq j\leq k}\|w_{i_j}-w'_{i_j}\|\\
    &\leq \max_{(v_1,h_1),\ldots, (v_{d+1},h_{d+1})\in \xi_n} \big\|\proj_{\RR^d}({\rm apex}(\Pi^-(\bv,\bh)))-c(\bv)\big\|.
\end{align*}

 Given $(\bv,\bh)=((v_1,h_1),\ldots,(v_{d+1},h_{d+1}))$, $\bv\in {\rm AI}^{d+1}$ define the events
\begin{align*}
 \cV_{\varepsilon}&:=\cV_{\varepsilon}(\bv,\bh)=\big\{\big\|\proj_{\RR^d}\big({\rm apex}(\Pi^-(\bv,\bh))\big)-c(\bv)\big\|>\varepsilon\big\},\\
    \cA=\cA(\bv,\bh) &:= \{(z,s^2)=\apex(\Pi^-(\bv,\bh))\text{ satisfies }s\in [0,3r_n/2], \|z\|\leq \sqrt{9r^2_n/4-s^2}\}.
\end{align*}
Recall that on $\cE_n$ the event $\cA$ occurs. Then
\begin{equation}\label{eq:05_01_26_eq7}
    \PP\big(\big\{\sk{L}(\xi_n)\not \subset (\sk{L}(\xi_n^0))^{\varepsilon-}\big\}\cap \cS_n\cap\cE_n\big)\leq \PP(\exists (v_1,h_1),\dots,(v_{d+1},h_{d+1})\in \xi_n: \cA\cap \cV_{\varepsilon})=:P(n).
\end{equation}
 Using the inequality $\PP(X\neq 0)\leq \EE(X)$ for a non-negative integer-valued random variable $X$ and the multivariate Mecke formula, and recalling the definition of the intensity measure of $\xi_n$ (see \eqref{eq:newInt}) yields
\begin{align*}
     P(n)&\leq \EE\sum_{(v_1,h_1),\dots,(v_{d+1},h_{d+1})\in (\xi_n)_{\neq}^{d+1}} \ind(\cV_{\varepsilon})\ind(\cA(\bv,\bh))\\
     &= \int_{(B_{3r_n})^{d+1}} \int_{(\RR_+)^{d+1}} \ind(\cV_{\varepsilon})\ind(\cA) \prod_{i=1}^{d+1} f_n(h_i) \ind(h_i\le 9r_n^2/4) \dd h_i \dd v_i,
 \end{align*}
 where $(\xi_n)_{\neq}^{d+1}$ is the process of $(d+1)$-tuples of distinct points of $\xi_n$. As in the proof of (i) denote by $(z,s^2)$ the apex of the downward paraboloid $\Pi^-(\bv,\bh)$ and consider the transformation $\varphi$ defined by \eqref{eq:transformation} with Jacobian given in \eqref{eq:Jacobian}. Recall also that on the event $\cA$ we have $z\in B_{3r_n/2}$ and $s\leq 3r_n/2$. Applying $\varphi$ to the integral above we get
 \begin{align*}
     P(n)&\leq C\int_{(\BB^d)^{d+1}} \int_0^{3r_n/2} \int_{B_{3r_n/2}} s^{(d+1)^2}  \ind(\|z-(s\,c(\by)+z)\|>\varepsilon)\\
     &\hspace{4cm}\prod_{i=1}^{d+1} f_n(s^2(1-\|y_i\|^2)\ind(sy_i+z \in B_{3r_n}) \dd z \dd s \dd y_1,\ldots \dd y_{d+1}.
 \end{align*}
 Using Fubini's Theorem, recalling the definition of the probability density $g_{n,s}$ in \eqref{eq:probDensity} and bounding $\ind(sy_i+z \in B_{3r_n})\le 1$ we have
\begin{align*}
    P(n)&\leq Cr_n^d \int_0^{3r_n/2} \int_{(\BB^d)^{d+1}} s^{d+1} \Big(\big(\frI^{\frac{d}{2}}f_n\big)(s^2)\Big)^{d+1}\ind(\|c(\by)\|>\varepsilon/s) \prod_{i=1}^{d+1} g_{n,s}(y_i) \dd y_1,\ldots \dd y_{d+1} \dd s\\
    &\leq C r_n^d \int_0^{3r_n/2} s^{d+1} \Big(\big(\frI^{\frac{d}{2}}f_n\big)(s^2)\Big)^{d+1} \PP(\|c(Y_1^{n,s},\ldots Y_{d+1}^{n,s})\|>\varepsilon/s) \dd s,
 \end{align*}
 where $Y^{(n,s)}_1,\ldots,Y^{(n,s)}_{d+1}$ are independent and identically distributed random points in $\BB^d$ with density $g_{n,s}$ for any $n\in \NN$ and $s>0$. Denoting by
 \begin{equation}\label{eq:05_02_26_eq10}
 F_n^{(2)}(s):=s^{d+1} \Big(\big(\frI^{\frac{d}{2}}f_n\big)(s^2)\Big)^{d+1} \PP(\|c(Y_1^{n,s},\ldots Y_{d+1}^{n,s})\|>\varepsilon/s), \quad s>0
 \end{equation}
 we obtain by \eqref{eq:05_01_26_eq7} that
 \begin{align*}
     \PP\big(\big\{\sk{L}(\xi_n)\not \subset (\sk{L}(\xi_n^0))^{\varepsilon-}\big\}\cap \cS_n\cap\cE_n\big)&\leq Cr_n^d\int_{0}^{3r_n/2}F_n^{(2)}(s)\dd s.
 \end{align*}
 The same argument applies to the event $\big\{\sk{L}(\xi_n^0)\not \subset (\sk{L}(\xi_n))^{\varepsilon-}\big\}\cap \cS_n\cap \cE_n$ leading to the same bound. Hence, combining this with \eqref{eq:05_01_26_eq1}, \eqref{eq:05_01_26_eq2}, \eqref{eq:04_02_26_eq6} and \eqref{eq:05_01_26_eq9} we conclude
 \begin{align}
     \PP\Big(\big[&(\sk{L}(\widetilde{\eta}_n)\setminus (\sk{L}(\widetilde{\eta}_0))^{\varepsilon-})\cup (\sk{L}(\widetilde{\eta}_0)\setminus (\sk{L}(\widetilde{\eta}_n))^{\varepsilon-})\big]\cap B_R\neq \varnothing\Big)\notag\\
     &\leq C\Big(\exp\big(-cr_n^d\big)+r_n^d\Big(\int_{0}^{3r_n/2}(F^{(1)}_n(s)+F^{(2)}_n(s))\dd s+\left\lvert \gamma -\big(\frI^1 f_n\big)(9r_n^2/4)\right\rvert\Big)\Big).\label{eq:05_01_26_eq11}
 \end{align}

\medskip

\textbf{Choosing the sequence $(r_n)_{n\in\NN}$:} To finish the proof it is sufficient to show that the right hand side of inequality \eqref{eq:05_01_26_eq11} goes to $0$ as $n\to\infty$. Since $F_n^{(2)}(s)\ge 0$ for any $s\ge 0$ this also implies that the right hand side of \eqref{eq:05_01_26_eq8} converges to $0$ as $n\to \infty$.

Recall the definitions of $F_n^{(1)}$ (see \eqref{eq:05_01_26_eq9}) and $F_n^{(2)}$ (see \eqref{eq:05_02_26_eq10}). By Lemma \ref{lm:conditionsImply} we have 
for any $s>0$ that $(Y^{(n,s)}_1,\ldots, Y^{(n,s)}_{d+1})$ converges in distribution to $(Z_1,\ldots,Z_{d+1})$, as $n \to \infty$, where $Z_1,\ldots,Z_{d+1}$ are independent and uniformly distributed on $\SS^{d-1}$. Note that the maps $\by\to h(\by)$ (see \eqref{eq:06_01_26_eq1}) and $\by \to \|c(\by)\|$ are bounded and by Lemma \ref{lm:continuity} they are also continuous at any $(x_1,\ldots,x_{d+1})\in (\SS^{d-1})^{d+1}\cap \rm{AI}^{d+1}$. Then using the Portmanteau theorem , and noting $\rho(Z_1,\ldots,Z_{d+1})=1$ and $\|c(Z_1,\ldots,Z_{d+1})\|=0$ almost surely, yields for any $s> 0$ and $\varepsilon>0$ that
\begin{align*}
    \lim_{n\to\infty}\EE \big(h(Y^{(n,s)}_1,\ldots,Y^{(n,s)}_{d+1})\big) &=\EE \big(h(s,Z_1,\ldots,Z_{d+1})\big)= 0,\\
    \lim_{n\to \infty}\PP(\|c(Y_1^{n,s},\ldots Y_{d+1}^{n,s})\|>\varepsilon/s) &= \PP(\|c(Z_1,\ldots Z_{d+1})\|>\varepsilon/s) = 0.
\end{align*}
This further implies that for any fixed $x>0$ we have
\begin{equation}\label{eq:04_01_26_eq3}
\lim_{n\to\infty}\int_{0}^x(F^{(1)}_n(s)+F_n^{(2)}(s))\dd s=0.
\end{equation}
Indeed, by the definition of $F_n^{(1)}$ and $F_n^{(2)}$ we have that $\lim_{n\to\infty}(F^{(1)}_n(s)+F_n^{(2)}(s))=0$. Moreover, due to \hyperref[item:C2]{(C2)} we have that for any $s\in [0,x]$,  $\alpha\ge 1$ and $n\ge n_2(x)$ that
\begin{align*}
    0\leq \Gamma(\alpha)\big(\frI^{\alpha+1} f_n\big)(s^2)\leq \int_{0}^{x^2}f_n(h)(x^2-h)^{\alpha}\dint h\leq \big(\frI^1f_n\big)(x^2)x^{2\alpha}\leq (\gamma+1)x^{2\alpha},
\end{align*}
meaning that $F^{(1)}_n(s)+F_n^{(2)}(s)$ is bounded for $s\leq x$ and by the dominated convergence theorem we conclude \eqref{eq:04_01_26_eq3}. 

Now by \eqref{eq:04_01_26_eq3} and \hyperref[item:C2]{(C2)} there is a (strictly) increasing sequence $(N_k)_{k\in\NN}$ of natural numbers such that for any $n\ge N_k$ we have
\[
    \int_{0}^{k}(F^{(1)}_n(s)+F_n^{(2)}(s))\dd s+\left\lvert \gamma -\big(\frI^1 f_n\big)(k^2)\right\rvert\leq 2^{-k}k^{-d}.
\]
Defining $r_n=2k/3$ for $N_k\leq n<N_{k+1}$ and $r_n=1/2$ for $1\leq n<N_k$ we obtain
\[
    r_n^d\Big(\int_{0}^{3r_n/2}(F^{(1)}_n(s)+F_n^{(2)}(s))\dd s+\left\lvert \gamma -\big(\frI^1 f_n\big)(9r_n^2/4)\right\rvert\Big)\leq 3^{-d}\cdot 2^{-k+d}\to 0,
\]
as $n\to\infty$. Further since $r_n\to\infty$ as $n\to\infty$ we get $\exp(-cr_n^d)\to 0$ as $n\to\infty$. Hence, with this choice of $(r_n)_{n\in\NN}$ the inequality \eqref{eq:05_01_26_eq11} implies \eqref{eq:04_02_26_eq6} and the bound \eqref{eq:05_01_26_eq8} implies \eqref{eq:02_01_26_eq6}. This finishes the proof.

\end{proof}

\section{Proofs of technical lemmas}\label{sec:proofs}

\paragraph*{Proof of Lemma \ref{lm:BoundaryBounds}:}

    The proof of 1. follows the same arguments as in \cite[Lemma 4.4]{GKT21} and \cite[Lemma 3]{GKT21b}. For the reader's convenience, we provide the proof here as well. 
    
    \textbf{Proof of 1. (a):} We start by noting that for any $a>0$, $T\in \RR$ we have
    \begin{align}
        \PP\big(\cH^{\rm{max}}(\eta_f, a, T)^c\big)&=\PP\big(\sup_{w\in B_a}\inf_{(v,h)\in \eta_f}\pow(w,(v,h))> T\big)\notag\\
        &\leq \PP\Big(\inf_{(v,h)\in\eta_f}\;\sup_{w\in B_{a}} \pow(w,(v,h)) > T \Big).\label{eq:17_12_25_1}
    \end{align}
    Let $T> 4a^2$ and note that for points $(v,h)\in \eta_f$ satisfying $h\le T-a^2$ and $v\in B_{\sqrt{T-h}-a}$ we have
    \[
        \sup_{w\in B_a} \pow(w,(v,h))=\sup_{w\in B_a} \|v-w\|^2+h\le \sup_{w\in B_a} (\|v\|+\|w\|)^2+h \le (\sqrt{T-h}-a+a)^2+h = T.
    \]
    Thus, defining
    \[
        K_3(a,T):=\big\{(v,h)\in \RR^d\times \RR:h \le T-a^2, v\in B_{\sqrt{T-h}-a}\big\}
    \]
    we obtain an estimate
    \begin{align}
        \PP\Big(\inf_{(v,h)\in\eta_f}\;\sup_{w\in B_a} \pow(w,(v,h)) > T \Big)\leq \PP\Big(\eta_f\cap K_3(a,T)=\varnothing\Big)
        =\exp\big(-\EE\big[\eta_f\big(K_3(a,T)\big)\big]\big).\label{eq:17_12_25_2}
    \end{align}
    Further using Campbell's theorem \cite[Proposition 2.7]{LP}, we get
    \begin{align*}
        \EE\big[\eta_f\big(K_3(a,T)\big)\big]&= \int_{-\infty}^{T-a^2} \int_{B_{\sqrt{T-h}-a}} f(h) \dd v \dd h
        \ge \kappa_d \int_{-\infty}^{T-4a^2} f(h) (\sqrt{T-h}-a)^{d} \dd h.
    \end{align*}
    For $h\le T-4a^2$ we have $a\le \frac{1}{2}\sqrt{T-h}$ and, hence,
    \begin{align}
        \EE\big[\eta_f\big(K_3(a,T)\big)\big]&\ge 2^{-d} \kappa_d \int_{-\infty}^{T-4a^2} f(h) (T-h)^{\frac{d}{2}} \dd h
        \ge 2^{-d}\pi^{\frac{d}{2}}(\frI^{\frac{d}{2}+1}f)(T-4a^2),\label{eq:17_12_25_3}
    \end{align}
    which together with \eqref{eq:17_12_25_1} and \eqref{eq:17_12_25_2} concludes the proof of the first bound. For the second bound we use the semigroup property \eqref{eq:SemigroupProp} and for any $x\ge x_0$ we get
    \[
        (\frI^{\frac{d}{2}+1}f_n)(x)=\big(\frI^{d\over 2}(\frI^1f_{n})\big)(x)\ge { (\frI^1f)(x_0)\over \Gamma(d/2)}\int_{x_0}^x(x-h)^{{d\over 2}-1}\dint h={(\frI^1f)(x_0)\over \Gamma(d/2+1)}(x-x_0)^{d\over 2}.
    \]
    Then for any $T\ge 4a^2+2x_0$ by \eqref{eq:17_12_25_3} we conclude 
    \[
        \EE\big[\eta_n\big(K_3(a,T)\big)\big]\ge 2^{-d}\pi^{\frac{d}{2}}{(\frI^1f)(x_0)\over \Gamma(d/2+1)}\big(T-4a^2-x_0\big)^{d\over 2}\ge \big(\frac{\pi}{8}\big)^{\frac{d}{2}} {(\frI^1f)(x_0)\over \Gamma(d/2+1)} (T-4a^2)^{\frac{d}{2}},
    \]
    and the proof follows by \eqref{eq:17_12_25_1} and \eqref{eq:17_12_25_2}.
    
    \textbf{Proof of 1. (b):} 
    Let $a>0$, $t\in \RR$ and define
    \[
        K_4(a,t):=\{(v,h)\in \RR^d\times (-\infty,t):\|v\|<a+\sqrt{t-h}\}.
    \]
    Note that $\pow(w,(v,h))< t$ for some $w\in B_a$ and $(v,h) \in \eta_f$ if and only if $\eta_f\cap K_4(a,t)\neq \varnothing$. Indeed, assume there exists $w\in B_a$ and $(v,h)\in \eta_f$, such that $\pow(w,(v,h))=\|v-w\|^2+h< t$. Then we have
    \[
        \|v\|\le \|v-w\|+\|w\|< \sqrt{t-h}+a,
    \]
    and $h<t$ and hence $(v,h)\in K_4(a,t)$. If otherwise there exists $(v,h)\in \eta_f$, such that $\|v\|<a+\sqrt{t-h}$ and $h< t$ we define $w:=a\frac{v}{\|v\|}\in B_a$ and note 
    \[
        \pow(w,(v,h))=\big\|v-a\frac{v}{\|v\|}\big\|^2+h < t-h+h=t.
    \]
    Hence,
    \begin{align*}
        \PP\big(\cH^{\rm{min}}(\eta_f, a, t)^c\big)
        =1-\PP(\eta_f\cap K_4(a,t)=\varnothing)=1-\exp\big(-\EE\big[\eta_f\big(K_4(a,t)\big)\big]\big).
    \end{align*}
    Using Jensen's inequality, the estimate
    \begin{align*}
        \EE\big[\eta_f\big(K_4(a,t)\big)\big]&=\int_{-\infty}^t \int_{B_{a+\sqrt{t-h}}}f(h)\dd v \dd h \\
        &= \kappa_d \int_{-\infty}^{t} (a+\sqrt{t-h})^{d} f(h) \dd h\\
        &\leq 2^{d-1}\kappa_d \Big(\int_{-\infty}^{t-a}(t-h)^{d\over 2}f(h)\dd h+a^{d}\int_{t-a}^{t} f(h) \dd h\Big)\\
        &\leq 2^{d-1}\kappa_d \big((\frI^{\frac{d}{2}+1}f)(t)+a^{d}(\frI^{1}f)(t)\big),
    \end{align*}
    together with the bound $1-e^{-x}\le x$ finishes the proof.

    \medskip
    
    \textbf{Proof of 2.:} We note that by the same arguments as before we have for any $a>0$ and $T\ge a^2$,
    \begin{align*}
        \PP\Big(\cH^{\rm{max}}(\eta^\gamma, a,T)\Big)&\leq \exp\big(-\EE\big[\eta^\gamma\big(K_3(a,T)\big)\big]\big)=\exp\big(-\gamma \kappa_d (\sqrt{T}-a)^d\big),
    \end{align*}
    where we recall that $\eta^{\gamma}$ is viewed as a point process in $\RR^d\times \RR$ by identifying every point $v\in  \eta^{\gamma}$ with the point $(v,0)\in \RR^d\times \RR$. Further, in this case we have that $\pow(w,(v,0))=\|v-w\|^2>0$ for any $(v,0)\in \eta^\gamma$ and hence for any $a>0$ and $t<0$ we obtain $\PP(\cH^{\rm{min}}(\eta^{\gamma}, a, t)^c)= 0$.

\paragraph*{Proof of Lemma \ref{lm:StabRadius}:}
    In what follows, let us first write $\eta$ for one of the Poisson point processes $\eta^\gamma$ or $\eta_{f}$, where $f$ is an admissible function. Throughout the proof $C$, $\tilde C$, $c$, $\tilde c$ will denote some positive constants, which only depend on $d$.  Their exact values might be different from line to line. 
    
    Let $R>0$ and $r\ge 2\max(1,R)$. By the law of total probability, for any $t\le 0$ we have
    \begin{align}\label{eq:totalprob}
        \PP(\cE(\eta,R,r)^c)\le P_1(\eta, R,r,t) + P_2(\eta,R,t),
    \end{align}
    where
    \begin{align*}
        &P_1(\eta,R,r,t):=\PP\big(\cE(\eta,R,r)^c\cap\cH^{\rm{min}}(\eta,R,t)\big),\\
        &P_2(\eta,R,t):=\PP\big(\cH^{\rm{min}}(\eta,R,t)^c\big).
    \end{align*}

    In order to bound $P_1(\eta, R,r,t)$, we first note that on $\cH^{\rm{min}}(\eta,R,t)$ we have $\bd \Psi(\eta)\cap (B_R\times \RR)\subset B_R\times [t,\infty)$, namely, the paraboloid growth process restricted to the the ball $B_R$ lies above $t$. This also implies that $\bd \Phi(\eta)\cap (B_R\times \RR)\subset B_R\times [t,\infty)$. Since for any $S\in\cL^*(\eta)$ the set $F(S):=\Pi^-(S)\cap\bd\Phi(\eta)$ is a paraboloid facet of $\Phi(\eta)$ by \eqref{eq:ParaboloidFacetSimplex} we conclude that if $S\cap B_R\neq \varnothing$ then $\varnothing\neq F(S)\cap (B_R\times \RR)\subset B_R\times [t,\infty)$ and, hence,
        \begin{align}\label{eq:apexCylinder}
        \Pi^{-}(S) \cap (B_R\times [t,\infty)) \neq \varnothing.
    \end{align}
    Let $(w,q)$ be the apex of $\Pi^-(S)$, then \eqref{eq:apexCylinder} means that there exists a point $(v',h')$ with $\|v'\|\le R$ and $h'\ge t$ such that $\|w-v'\|\le \sqrt{q-h'}$, implying 
    \[
        \|w\|\le R+\sqrt{q-t},\qquad q\ge t.
    \]
    At the same time on $\cE(\eta,R,r)^c$ we have that there exists a simplex $S\in\cL^*(\eta)$ such that $S\cap B_R\neq \varnothing$ and $\apex (\Pi^{-}(S))=(w,q)\not\in \big(\Pi_{(\origin,(R+r)^2)}^{-}\big)^{\downarrow}$, leading to
    \[
        \|w\|\ge \sqrt{\max((R+r)^2-q,0)}.
    \]
    This implies, that
    \begin{align*}
    P_1(\eta, R,r,t)&\leq \PP\big(\exists S\in \cL^*(\eta)\colon \apex (\Pi^{-}(S))=(w,q)\\
    &\qquad\qquad\text{ satisfies }q\ge t, \sqrt{\max((R+r)^2-q,0)}\leq \|w\|\leq R+\sqrt{q-t}\big).
    \end{align*}
    Further note that the condition $\sqrt{\max((R+r)^2-q,0)}\leq R+\sqrt{q-t}$ implies $q\ge r^2/2+t$. Indeed, if $\max((R+r)^2-q,0)=0$, then we immediately get $q\ge (R+r)^2\ge r^2/2$. On the other hand, for $q\le (R+r)^2$ we use that for any $t\leq 0$ it holds that
    \[
        R+ \sqrt{q-t}\ge \sqrt{(R+r)^2-q}\ge \sqrt{(R+r)^2-(q-t)}.
    \]
    Taking square on both sides yields
    \[
        R^2+2R\sqrt{q-t} + (q-t) \ge (R+r)^2-(q-t)
    \]
    and, hence,
    \[
        q-t+R\sqrt{q-t}-(Rr+\frac{r^2}{2})\ge 0.
    \]
    In particular this implies
    \[
        \sqrt{q-t} \ge \frac{-R+\sqrt{R^2+4Rr+2r^2}}{2}\ge\frac{-R+R+\sqrt{2}r}{2} = \frac{r}{\sqrt{2}}.
    \]
    
    Thus, defining
    \begin{align*}
        K_5(r,t):=\{(w,q)\in\RR^{d}\colon q\in \big[r^2/2+t,\infty), \|w\|\leq R+\sqrt{q-t}\},
    \end{align*}
    we conclude 
    \begin{equation}\label{eq:P1Estimate1}
        P_1(\eta, R,r,t)\leq \PP\big(\exists S\in \cL^*(\eta)\colon \apex (\Pi^{-}(S))=(w,q)\in K_5(r,t)\big).
    \end{equation}
    Next, we consider a covering of $\RR^{d}\times [t+r^2/2,\infty)$ by the boxes
    \[
        Q_m(z):=(z\oplus [0,\ell(m)]^{d})\times [t+r^2/2+m,t+r^2/2+m+1],\qquad m\in \NN_0,\,\, z\in \ell(m)\ZZ^{d},
    \]
    where $\ell(m)={1\over 2\sqrt{2d}}\sqrt{r^2/2+m}>0$ and $\oplus$ denotes the Minkowski addition. Applying the union bound to \eqref{eq:P1Estimate1} yields
    \[
        P_1(\eta,R,r,t)\le \sum_{m=0}^{\infty}\sum_{\substack{z\in \ell(m)\ZZ^d\colon \\
        Q_m(z)\cap K_5(r,t)\neq \varnothing}}\PP(\exists S\in \cL^*(\eta):\apex (\Pi^-(S))\in Q_m(z)).
    \]
    By stationarity of $\eta$ we get
    \[
    \PP(\exists S\in \cL^*(\eta):\apex (\Pi^-(S))\in Q_m(x))=\PP(\exists S\in \cL^*(\eta):\apex (\Pi^-(S))\in Q_m(\origin)).
    \]
    Further, since the set $\Pi^-(S)\cap \bd \Phi(\eta)$ is a paraboloid facet we have for $(w,q):=\apex (\Pi^-(S))$ that there exist distinct points $x_1=(v_1,h_1),\ldots,x_{d+1}=(v_{d+1},h_{d+1})\in \eta\cap \Pi^-(S)$, such that
    \[
        \pow(w,(v_1,h_1))=\ldots=\pow(w,(v_{d+1},h_{d+1}))=q,
    \]
    and $\inter (\Pi^-(S))^{\downarrow}\cap \eta=\varnothing$, namely $\pow(w,(v,h))\ge q$ for all $(v,h)\in \eta$. Hence, the condition $\apex \Pi^-(S)\in Q_m(\origin)$ implies that there exists $w\in[0,\ell(m)]^d$ such that $\pow(w,(v,h))\ge t+r^2/2+m$ for all $(v,h)\in \eta$. Then we have
    \begin{align*}
        \PP(\exists S\in \cL^*(\eta):\apex (\Pi^-(S))\in Q_m(z))
        &\le \PP\Big(\sup_{w\in [0,\ell(m)]^{d}}\inf_{(v,h)\in \eta}\pow(w,(v,h))>t+r^2/2+m\Big)\\
        &\le \PP\Big(\cH^{\rm{max}}\Big(\eta,{1\over 2\sqrt{2}}\sqrt{r^2/2+m},t+r^2/2+m\Big)^c\Big),
    \end{align*}
     which is independent of $z$. For fixed $m\in \NN_0$, denote by 
     \begin{align*}
        N(m)&:=\#\big\{z\in \ell(m)\ZZ^d\colon Q_m(z)\cap K_5(r,t)\neq \varnothing\big\}
     \end{align*}
     the number of boxes having non-empty intersection with the set $K_5(r,t)$. We note that $(v,h)\in Q_m(z)$ satisfies $h\leq t+r^2/2+m+1$ and, hence, $Q_m(z)\cap K_5(r,t)\neq \varnothing$ together with $\sqrt{r^2/2+m+1}\leq 2\sqrt{r^2/2+m}$ and $R\leq \sqrt{r^2/2+m}$ (due to $r\ge 2\max(1,R)$) implies 
     \[
     z\in B_{R+\sqrt{r^2/2+m+1}+\sqrt{d}\ell(m)}\subset B_{4\sqrt{r^2/2+m}}.
     \]
     Thus,
      \begin{align*}
         N(m)&\leq \#\big\{z\in \ell(m)\ZZ^{d}\colon z\in B_{4\sqrt{r^2/2+m}}\big\}\leq C(\ell(m)^{-1}\sqrt{r^2/2+m})^d\leq C,
    \end{align*}
    and we obtain
    \begin{equation}\label{eq_19.10_3}
        \begin{aligned}
           P_1(\eta,R,r,t)\le C\sum_{m=0}^{\infty}\PP\Big(\cH^{\rm{max}}\Big(\eta,{1\over 2\sqrt{2}}\sqrt{r^2/2+m},t+r^2/2+m\Big)^c\Big).
        \end{aligned}
    \end{equation}
    Next, we evaluate this sum in the cases $\eta=\eta^\gamma$ and $\eta=\eta_{f}$. Let first $\eta=\eta^\gamma$, then by Lemma \ref{lm:BoundaryBounds} and choosing $t=0$ we directly conclude
    \begin{equation*}
        \PP\Big(\cH^{\rm{max}}\Big(\eta^{\gamma},{1\over 2\sqrt{2}}\sqrt{r^2/2+m},r^2/2+m\Big)^c\Big)\le \exp\big(-c\gamma\,(r^2/2+m)^{\frac{d}{2}}\big).
    \end{equation*}
    Next by Lemma \ref{lm:BoundaryBounds} for $\eta=\eta_{f}$ with $f$ admissible, choosing $x_0=1/2$ and $t=-r^2/8$, and noting that $r\ge 4$ and $m\ge 0$ we have that
     \[
        \PP\Big(\cH^{\rm{max}}\Big(\eta_{f},{1\over 2\sqrt{2}}\sqrt{r^2/2+m},3r^2/8+m\Big)^c\Big) \le  \exp\Big(-c\,(\frI^1 f)(1/2)\big(r^2/4+m\big)^{\frac{d}{2}}\Big).
    \]   
    Now applying these bounds to \eqref{eq_19.10_3} we obtain
    \begin{align}\label{eq:P_1}
        P_1(\eta,R,r,t)&\leq C\sum_{m=0}^{\infty}\exp\big(-\widetilde c(m+r^2/4)^{\frac{d}{2}}\big)\leq C\exp(-\widetilde cr^d)\sum_{m=0}^{\infty}e^{-\tilde cm^{d\over 2}}\leq C\exp(-\widetilde cr^d),
    \end{align}
    where in the second step we apply $(a+b)^{d/2}\ge a^{d/2}+b^{d/2}$, which holds for all $a,b\ge 0$ and $d\ge 2$, and $\widetilde c=c\gamma$ for $\eta=\eta^{\gamma}$ and $\widetilde c=c(\frI^1 f)(1/2)$ for $\eta=\eta_f$.

    It remains to bound $P_2(\eta, R,t)$. For $\eta=\eta^\gamma$ we have by Lemma \ref{lm:BoundaryBounds} that $P_2(\eta, R,0)=0$ and the proof in this case follows by combining \eqref{eq:totalprob} and \eqref{eq:P_1}. Let now $f$ be admissible and choose $t=-r^2/8$. By Lemma \ref{lm:BoundaryBounds} we have
    \[
        \begin{aligned}
            P_2(\eta_f,R,-r^2/8)&\le C \big(\big(\frI^{\frac{d}{2}+1}f\big)\big(-r^2/8\big)+R^d(\frI^1 f)(-r^2/8)\big),
        \end{aligned}    
    \]
   and combining this with \eqref{eq:totalprob} and \eqref{eq:P_1} finishes the proof in this case.

\paragraph{Proof of Lemma \ref{lm:Stabilisation}:}

    Let $R,r>0$, $t<0$ and consider the events (see \eqref{eq:Eevent} and \eqref{eq:Hmin})
    \begin{align*}
            \cE&:=\cE({\eta}, R,r),\,&\cH^{{\rm min}}&:=\cH^{{\rm min}}({\eta},2\sqrt{(R+r)^2-t},t),\, &\cH^{\rm max}:=\cH^{{\rm max}}({\eta},R,(R+r)^2).
    \end{align*}

    \medskip
    
    \textbf{Proof of 1.:} First we show that the event $\cE\cap \cH^{{\rm min}}$ ensures that $\cL^*(\eta)\cap B_R$ depends only on the configuration of the corresponding Poisson point process $\eta$ within 
    \[
        K_0(R,r,t)=\big\{(v,h)\in \RR^d \times [t,(R+r)^2]: \|v\|\le 2\sqrt{(R+r)^2-t}\big\},
    \]
    namely that on $\cE\cap \cH^{{\rm min}}$, we have that each set $S\in \cL^*(\eta)$ satisfying $S\cap B_R\neq \varnothing$ is determined by points of $\eta\cap K_0(R,r,t)$. Indeed, let $S\in \cL^*(\eta)$ be such that $S\cap B_R\neq \varnothing$. The simplex $S$ is determined by $\Pi^{-}(S)$, which is defined as the downward paraboloid containing points $(v_i,h_i)\in \eta_n$, $1\leq i\leq d+1$, where $v_1,\ldots,v_{d+1}$ are the vertices of $S$. Note that on $\cE$ we have that $(w,q):=\apex (\Pi^{-}(S))\in \big(\Pi^{-}_{\origin,(R+r)^2}\big)^{\downarrow}$ and, hence,
    \begin{equation}\label{eq:22_12_25_eq1}
        \|w\|\le \sqrt{(R+r)^2-q},\quad q\le (R+r)^2.
    \end{equation}
    For $q\ge t$ the intersection of $\Pi^{-}_{(w,q)}$ with the subspace $\RR^d\times\{t\}$ is a ball of radius $\sqrt{q-t}$ and center $w$. Hence, for any $(w,q)$ satisfying \eqref{eq:22_12_25_eq1} and $q\ge t$ it follows, using the triangle inequality, that the intersection of $\Pi^{-}_{(w,q)}$ with $\RR^d\times\{t\}$ is included in the ball with radius 
    \[
        \sqrt{(R+r)^2-q}+\sqrt{q-t}\leq 2\sqrt{(R+r)^2-t}.
    \]
    Thus, the event $\cH^{{\rm min}}$ additionally implies that $S\subset B_{2\sqrt{(R+r)^2-t}}$ and for all vertices we have $(v_i,h_i)\in K_0(R,r,t)$, $1\leq i\leq d+1$. Hence, on $\cE\cap \cH^{{\rm min}}$ it holds that
    \[
        \bigcup_{S\in \cL^*(\eta), S\cap B_R\neq \varnothing} \bd S = \bigcup_{S\in \cL^*(\eta\cap K_0(R,r,t)), S\cap B_R\neq \varnothing} \bd S.
    \]

    \medskip

    \textbf{Proof of 2.:} Similarly, on $\cH_{n}^{\rm max}\cap \cH_n^{\rm min}$ we have that $\sk{L}(\eta)\cap B_R$ depends only on the configuration of $\eta$ within $K_0(R,r,t)$. In order to show that, we note that on $\cH^{\rm max}$ we have that only points $(v,h)\in\eta$ with $\Pi^+_{(v,h)}\cap (B_R\times (-\infty,(R+r)^2]\neq \varnothing$ may influence the boundary of $\Psi(\eta)$ within the cylinder $B_R\times \RR$ and, hence, the configuration of $\sk{L}(\eta)$ within $B_R$. Such points satisfy
    \begin{equation}\label{eq:02_01_26_eq1}
        \|v\|\leq R+\sqrt{(R+r)^2-h},\qquad h\leq (R+r)^2.
    \end{equation}
    Assume now that there is $(v,h)\in\eta$ satisfying \eqref{eq:02_01_26_eq1} with $h<t$. Then for sufficiently small $\delta>0$ there is $(w,s)\in(\Pi^+_{(v,h)})^{\uparrow}$ with $s=t-\delta<t$ and
    \[
        \|w\|= R+\sqrt{(R+r)^2-t+\delta}\leq 2\sqrt{(R+r)^2-t}.
    \]
    By definition $(w,s)\in \Psi(\eta)$ and, hence, this contradicts $\cH^{\rm min}$. Further $h\ge t$ and \eqref{eq:02_01_26_eq1} imply 
    \[
        \|v\|\leq R+\sqrt{(R+r)^2-t}\leq 2\sqrt{(R+r)^2-t},
    \]
     since $t\leq 0$ and $r>0$. Hence, on $\cH_{n}^{\rm max}\cap \cH_n^{\rm min}$ we have that 
    \[
        \sk{L}(\eta)\cap B_R=\sk{L}(\eta\cap K_0(R,r,t))\cap B_R.
    \]

\paragraph{Proof of Lemma \ref{lm:C1properties}:}
Using \hyperref[item:C1]{(C1)}-(iii) let $x_0\in\RR$ and $\delta>0$ be such that
\[
    M:=\sup_{n\in\NN}\int_{-\infty}^{x_0}|h|^{{d\over 2}+\delta}f_n(h)\dd h<\infty.
\]
\textbf{Proof of 1.:} Let $k\in \NN$. By \hyperref[item:C1]{(C1)}-(ii)
we get
\begin{align*}
    \Big|\int_{-\infty}^{x_0}|h|^{d/2+\delta}{\bf 1}\{h\in [-k,x_0]\}(f_n(h)-f(h))\dd h\Big|&\leq \max(|k|,x_0)^{d/2+\delta}\int_{-\infty}^{x_0}|f_n(h)-f(h)|\dd h\to 0,
\end{align*}
as $n\to \infty$, implying that for any $k\in\NN$ we have
\begin{align*}
    \int_{-\infty}^{x_0}|h|^{d/2+\delta}{\bf 1}\{h\in [-k,x_0]\}f(h)\dd h&=\lim_{n\to\infty}\int_{-\infty}^{x_0}|h|^{d/2+\delta}{\bf 1}\{h\in [-k,x_0]\}f_n(h)\dd h\\
&\leq \limsup_{n\to\infty}\int_{-\infty}^{x_0}|h|^{d/2+\delta}f_n(h)\dd h\leq M.
\end{align*}
Then by the monotone convergence theorem we conclude 
\[
    \int_{-\infty}^{x_0}|h|^{d/2+\delta}f(h)\dd h=\lim_{k\to\infty}\int_{-\infty}^{x_0}|h|^{d/2+\delta}{\bf 1}\{h\in [-k,x_0]\}f(h)\dd h\leq M<\infty.
\]

\medskip

\textbf{Proof of 2.:} Let $n_0>0$ be such that $-x_n\leq x_0$ for all $n\ge n_0$. Further we note that for any $h\leq -x_n$ we have that $x_n^{-1}|h|\ge 1$, since $x_n>0$ for all $n\in\NN$. Then by \hyperref[item:C1]{(C1)}-(iii) we obtain that
\[
    x_n^{d/2}(\frI^1 f_n)(-x_n)\leq x_n^{d/2}\cdot x_n^{-d/2-\delta}\int_{-\infty}^{x_0}|h|^{d/2+\delta}f_n(h)\dd h\leq M x_n^{-\delta}\to 0,
\]
as $n\to\infty$. Similarly, we get
\[
    \Gamma\big(\frac{d}{2}+1\big)\big(\frI^{{d\over 2}+1} f_n\big)(-x_n)\leq \int_{-\infty}^{-x_n}|h|^{d/2}f_n(h)\dd h\leq x_n^{-\delta}\int_{-\infty}^{x_0}|h|^{d/2+\delta}f_n(h)\dd h\leq M x_n^{-\delta}\to 0,
\]
as $n\to\infty$. Due to \hyperref[item:C1]{(C1)}-(ii) the same arguments hold for the function $f$ and we conclude that $x_n^{d/2}(\frI^1 f)(-x_n)\to 0$ and $(\frI^{{d\over2}+1} f)(-x_n)\to 0$ as $n\to\infty$.
    
\paragraph{Proof of Lemma \ref{lm:continuity}:}

    Let $\by:=(y_1,\dots,y_{d+1}) \in {\rm AI}^{d+1}$. Then the center $c(\by)$ and the radius $\rho(\by)$ of the unique ball, containing $y_1,\dots,y_{d+1}$ on its boundary, satisfy
    \begin{align}\label{eq:w_and_s}
        \|y_1-c(\by)\|=\ldots=\|y_{d+1}-c(\by)\|=\rho(\by).
    \end{align}
    Hence, we have $\|y_1\|^2-2\langle c(\by),y_1\rangle=\ldots= \|y_{d+1}\|^2-2\langle c(\by),y_{d+1}\rangle$ and taking the pairwise difference gives us
    \begin{align*}
        2\langle c(\by),y_j-y_1\rangle = \|y_j\|^2-\|y_1\|^2,\qquad 2\leq j\leq d+1.
    \end{align*}
    Let $b(\by):=(\|y_2\|^2-\|y_1\|^2,\ldots, \|y_{d+1}\|^2-\|y_1\|^2)^{\top}$ and $A(\by)$ be the $d\times d$ matrix with rows $2(y_i-y_1)$, $2\leq j\leq d+1$. We note that $\det A(\by)\neq 0$, since $y_1,\ldots, y_{d+1}$ are affinely independent. Since the matrix inversion is a continuous operation on the set of matrices $A$ with $\det A\neq 0$ we get that $c(\by)=A(\by)^{-1}b(\by)$ is continuous. The continuity of $\rho(\by)$ follows by the continuity of $c(\by)$ and \eqref{eq:w_and_s}.
    
\paragraph{Proof of Lemma \ref{lm:conditionsImply}:}

    \textbf{Proof of 1.:}  The function $g_{n,s}$ is positive and
    \begin{align*}
        \int_{\BB^d}g_{n,s}(x)\dint x&=\frac{s^d}{\pi^{d/2}(\frI^{d/2}f_n)(s^2)}\int_{\BB^d}f_n(s^2-s^2\|x\|^2)\dint x\\
        &=\frac{s^d}{\Gamma({d\over 2})(\frI^{d/2}f_n)(s^2)}\int_{0}^{1}r^{{d\over 2}-1}f_n(s^2-s^2r)\dint r=1.
    \end{align*}
    Hence, the measure $\PP_{n,s}(\cdot):=\int_{\BB^d}g_{n,s}(x)\ind(x\in\cdot)\dd x$ defines a probability measure on $\BB^d$.

    \medskip
    
    \textbf{Proof of 2.:} Let $s>0$. We leave out the index $s$ for now and write $\PP_n:=\PP_{n,s}$ and $g_n:=g_{n,s}$.
    Note that $\BB^d$ is a compact metric space. Hence, for any sequence of probability measures $(\PP_n)_{n\in \NN}$, there exists a subsequence $(\PP_{n_j})_{j\in \NN}$ such that 
    \[
        \PP_{n_j}{\underset{j\to\infty}\longrightarrow} \PP\quad\text{weakly}
    \]
    for some probability measure $\PP$ on $\RR^d$ (see \cite[Lemma 8.11.]{koralov2007theory}). This means that the sequence $(\PP_n)_{n \in \NN}$ is weakly compact, which is equivalent to being tight in $\RR^d$ by Helly's Theorem, see \cite[Theorem 8.9]{billing}. The probability measure $\PP$ is concentrated on $\SS^{d-1}$ since 
    \[
        \PP_{n}(r\BB^d){\underset{n\to\infty}\longrightarrow} 0, \qquad r\in(0,1).
    \]
    Indeed, using spherical coordinates $\alpha=\|x\|$ and the change of variables $t=s^2\alpha^2$ yields
    \begin{align*}
        \PP_{n}(r\BB_d) &= \frac{s^d}{\pi^{d/2}(\frI^{d/2}f_n)(s^2)}\int_{r\BB^d}f_n(s^2-s^2\|x\|^2)\dd x\\
        &= \frac{1}{\Gamma(\frac{d}{2})(\frI^{d/2}f_n)(s^2)}\int_{s^2(1-r^2)}^{s^2}f_n(t)(s^2-t)^{\frac{d}{2}-1}\dd t\\
        &\le \frac{1}{\Gamma(\frac{d}{2})(\frI^{d/2}f_n)(s^2)}(sr)^{d-2}\int_{s^2(1-r^2)}^{s^2}f_n(t)\dd t,
    \end{align*}
    since $\frac{d}{2}-1\ge 0$, and moreover
    \[
        \lim_{n\to \infty}\int_{s^2(1-r^2)}^{s^2}f_n(t)\dd t = \lim_{n\to \infty}\int_{0}^{s^2}f_n(t)\dd t - \lim_{n\to \infty}\int_{0}^{s^2(1-r^2)}f_n(t)\dd t = \gamma - \gamma = 0.
    \]
    
    Furthermore, $\PP$ is rotational invariant. As stated in Section \ref{subsec:FUN} the unique rotational invariant measure on $\SS^{d-1}$ is $\tilde\sigma_{d-1}$. Therefore each subsequence that converges weakly, converges to the same limit, i.e. $\tilde\sigma_{d-1}$. Then \cite[Corollary of Theorem 5.1.]{billing} yields $\PP_{n}{\underset{j\to\infty}\longrightarrow} \tilde\sigma_{d-1}$ weakly. 
    
\section*{Acknowledgments}

The authors were supported by the DFG under Germany's Excellence Strategy  EXC 2044 -- 390685587, \textit{Mathematics M\"unster: Dynamics - Geometry - Structure} and RTG 3027 \textit{Rigorous Analysis of Complex Random Systems}. AG was supported by the DFG priority program SPP 2265 \textit{Random Geometric Systems}.

\end{document}